\documentclass[11pt]{article}
\textwidth 15cm
\textheight 22.0cm
\voffset -0.5cm
\hoffset -1.1cm
\baselineskip = 0.3cm

\usepackage{amsmath,amssymb,mathdots,array,booktabs,graphicx,mathrsfs,amsthm}
\usepackage[T1]{fontenc}
\usepackage{color}
\usepackage{kbordermatrix}

\newtheorem{theorem}{Theorem}[section]
\newtheorem{lemma}[theorem]{Lemma}
\newtheorem{corollary}[theorem]{Corollary}
\newtheorem{proposition}[theorem]{Proposition}

\theoremstyle{definition}
\newtheorem{definition}[theorem]{Definition}
\newtheorem{example}{Example}[section]

\theoremstyle{remark}
\newtheorem{remark}[theorem]{Remark}

\numberwithin{equation}{section}

\newcommand{\ww}{\widetilde{W}}
\newcommand{\na}{\mathscr{N}(\mathcal{A})}
\newcommand{\bsmat}{\left[\begin{smallmatrix} }
\newcommand{\esmat}{\end{smallmatrix}\right] }
\newcommand{\E}{\mathbb{E}}

\DeclareMathOperator*{\argmin}{argmin}

\DeclareMathOperator{\berr}{bker}
\DeclareMathOperator{\diag}{diag}
\DeclareMathOperator{\card}{card}
\DeclareMathOperator{\cond}{cond}

\DeclareMathOperator{\reshape}{reshape}
\DeclareMathOperator{\sep}{sep}
\DeclareMathOperator{\subspan}{span}

\DeclareMathOperator{\trace}{trace}
\DeclareMathOperator{\myvec}{vec}

\DeclareMathOperator{\Bdiag}{Bdiag}
\DeclareMathOperator{\OffBdiag}{OffBdiag}

\DeclareMathOperator{\robu}{nd}
\DeclareMathOperator{\uniq}{uq}
\DeclareMathOperator{\ub}{ub}

\DeclareMathOperator{\p}{p}

\DeclareMathOperator{\F}{F}
\DeclareMathOperator{\HH}{H}
\DeclareMathOperator{\T}{T}

\def\cA{\mathcal{A}}
\def\cE{\mathcal{E}}

\def\gjbdp{{\sc gjbdp}}
\def\jbd{{\sc jbd}}
\def\jbdp{{\sc jbdp}}
\def\ojbdp{{\sc o-jbdp}}

\def\jdp{{\sc jdp}}
\def\ojdp{{\sc o-jdp}}

\def\wtd{\widetilde}
\def\what{\widehat}

\def\sss{\scriptscriptstyle}
\def\hm{\hphantom{-}}

\title{Perturbation Analysis for Matrix  Joint Block Diagonalization}

\author{
Yunfeng Cai%
\thanks{LMAM \& School of Mathematical Sciences, Peking Univ., Beijing, P.R. China, 100871. 
        Email: {\tt yfcai@math.pku.edu.cn}.
             The work of this author  was supported in part by NSFC grants 11301013, 11671023 and 11421101. 
             }
\and
Ren-Cang Li%
\thanks{
           Department of Mathematics,
           University of Texas at Arlington,
           P.O. Box 19408,
           Arlington, TX 76019-0408, USA. Email: {\tt  rcli@uta.edu}.
      The work of this author was supported in part by
      NSF grants  DMS-1317330 and CCF-1527104, and NSFC grant 11428104.}
}

\begin{document}

\maketitle


\begin{abstract}
The matrix joint block diagonalization problem (\jbdp) of a given matrix set
$\cA=\{A_i\}_{i=1}^m$ is about finding a nonsingular matrix $W$ such that all
$W^{\T} A_i W$ are block diagonal. It includes the matrix joint diagonalization problem (\jdp) as a special case for which
all $W^{\T} A_i W$ are required diagonal. Generically, such a matrix $W$ may not exist, but
there are practically applications such as multidimensional independent component analysis (MICA)
for which it does exist under the ideal situation, ie., no noise is presented.
However, in practice noises do get in and, as a consequence,
the matrix set is only approximately block diagonalizable, i.e., one can only make all
$\wtd W^{\T} A_i\wtd W$ nearly block diagonal at best, where $\wtd W$ is an approximation to $W$, obtained
usually by computation. This motivates us to develop a perturbation theory
for \jbdp\ to address, among others, the question: how accurate this $\wtd W$ is.
Previously such a theory for \jdp\
has been discussed, but no effort has been attempted for \jbdp\  yet.
In this paper, with the help of a necessary and sufficient condition
for solution uniqueness of \jbdp\  recently developed in [Cai and Liu, {\em SIAM J. Matrix Anal. Appl.}, 38(1):50--71, 2017],
we are able to establish an error bound, perform backward error analysis, and propose a condition number for
\jbdp.
Numerical tests validate the theoretical results.
\end{abstract}


{\small
{\bf Key words.} matrix joint block diagonalization, perturbation analysis, backward error, condition number, MICA

\medskip
{\bf AMS subject classifications}. 65F99, 49Q12, 15A23, 15A69
}

\section{Introduction}
The matrix joint block diagonalization problem (\jbdp) is about jointly block diagonalizing a set of matrices.
In recent years, it has found many applications in
independent subspace analysis, also known as {\em multidimensional independent component analysis\/} (MICA) (see,
e.g., \cite{cardoso1998multidimensional, de2000fetal,  theis2005blind, theis2006towards})
and semidefinite programming (see, e.g., \cite{ bai2009exploiting, de2007reduction,  de2010exploiting,gatermann2004symmetry}).
Tremendous efforts have been devoted to solving \jbdp\  and, as a result, several numerical methods have been proposed.
The purpose of this paper, however, is to develop
a perturbation theory for \jbdp. For this reason,
we will not delve into  numerical methods, but
refer the interested reader to \cite{cai2017algebraic, chabriel2014joint, de2009survey,  tichavsky2014non}
and references therein.
The {\sc matlab} toolbox for tensor computation -- {\sc tensorlab} \cite{tensorlab} can also be used for the purpose.

In the rest of this section, we will formally introduce \jbdp\ and formulate its associated perturbation problem,
along with some notations and definitions. Through a case study on the basic MICA model,
we rationalize our formulations and provide our motivations for current study in this paper.
Previously, there are only a handful papers in the literature that studied the perturbation analysis of
the matrix joint diagonalization problem (\jdp).  Briefly, we will review these existing works and their limitations.
Finally, we explain our contribution and the organization of  this paper.

\subsection{Joint Block Diagonalization (\jbd)}\label{ssec:jbd}
A {\em partition} of positive integer $n$:
\begin{equation}\label{eq:tau-n}
\tau_n=(n_1,\dots,n_t)
\end{equation}
means that $n_1,n_2,\dots,n_t$ are all positive integers and their sum is $n$, i.e., $\sum_{i=1}^t n_i=n$.
The integer $t$ is called the {\em cardinality} of the partition $\tau_n$,
denoted by $\card(\tau_n)$.

Given a partition $\tau_n$ as in \eqref{eq:tau-n} and a matrix $A\in\mathbb{R}^{n\times n}$ (the set of $n\times n$ real matrices), we partition
$A$ by
\begin{equation}\label{eq:tau-n-part}
A=\kbordermatrix{ &\sss n_1 & \sss n_2 & \sss \cdots &\sss n_t \\
         \sss n_1 & A_{11} & A_{12} & \cdots & A_{1t} \\
         \sss n_2 & A_{21} & A_{22} & \cdots & A_{2t} \\
         \sss \vdots & \vdots & \vdots &  & \vdots \\
         \sss n_t & A_{t1} & A_{t2} & \cdots & A_{tt} }
\end{equation}
and define its {\em $\tau_n$-block diagonal part\/} and {\em $\tau_n$-off-block diagonal part\/} as
\begin{equation*}
    \Bdiag_{\tau_n}(A)=\diag(A_{11},\dots,A_{tt}),\quad   \OffBdiag_{\tau_n}(A)=A-\Bdiag_{\tau_n}(A).
\end{equation*}
The matrix $A$ is referred to as {\em a $\tau_n$-block diagonal matrix\/} if $\OffBdiag_{\tau_n}(A)=0$.
 The set of all $\tau_n$-block diagonal matrices is denoted by $\mathbb{D}_{\tau_n}$.

\smallskip
\noindent{\bf The Joint Block Diagonalization  Problem} (\jbdp).
Let $\cA=\{A_i\}_{i=1}^m$ be the set of $m$ matrices, where each $A_i\in{\mathbb R}^{n\times n}$. The \jbdp\ for
$\cA$ with respect to $\tau_n$ is to
find a nonsingular matrix $W\in\mathbb{R}^{n\times n}$ such that all $W^{\T} A_i W$
are $\tau_n$-block diagonal, i.e.,
\begin{equation}\label{eq:nojbd}
W^{\T} A_i W=\diag(A_{i}^{(11)},\dots, A_{i}^{(tt)}) \quad \mbox{for}\quad i=1,2,\dots, m,
\end{equation}
where $A_{i}^{(jj)}\in\mathbb{R}^{n_j\times n_j}$.
When \eqref{eq:nojbd} holds, we say that $\cA$ is {\em $\tau_n$-block diagonalizable\/} and
$W$ is a {\em $\tau_n$-block diagonalizer\/} of  $\cA$.
If  $W$ is also required to be orthogonal,
this \jbdp\   is referred to as an {\em orthogonal \jbdp\  \/} ({\sc o-jbdp}).

By convention, if $\tau_n=(1,1,\dots,1)$, the word ``{\em $\tau_n$-block\/}'' is
dropped from all relevant terms. For example, ``$\tau_n$-block diagonal'' is reduced to just ``diagonal''.
Correspondingly, the letter ``{\sc B}''
is dropped from all abbreviations. For example, ``\jbdp'' becomes ``\jdp''. This convention is adopted throughout
this article.

Generically, \jbdp\ often has no solution for $m\ge 3$ and $n_j$ not so unevenly distributed, simply by counting the number of
equations implied by \eqref{eq:nojbd} and the number of unknowns. For example, when $m=3$ and $n_1=n_2=n_3=n/3$,
there are $m(n^2-\sum_{i=1}^{t}n_i^2)=2n^2$ equations but only $n^2$ unknowns in $W$. However, in certain practical applications
such as MICA without noises, solvable \jbdp\ do arise.


\begin{definition}
A permutation matrix $\Pi\in\mathbb{R}^{n\times n}$ is called {\em $\tau_n$-block diagonal preserving\/} if
$\Pi^{\T} D \Pi\in\mathbb{D}_{\tau_n}$ for any $D\in\mathbb{D}_{\tau_n}$.
The set of all $\tau_n$-block diagonal preserving permutation matrices is denoted by $\mathbb{P}_{\tau_n}$.
\end{definition}

Evidentally, any permutation matrix $\Pi\in{\mathbb D}_{\tau_n}$ is in $\mathbb{P}_{\tau_n}$. This is because
such a $\Pi$ can be expressed as $\Pi=\diag(\Pi_1,\ldots,\Pi_t)$, where $\Pi_j$ is an $n_j\times n_j$ permutation matrix.
But not all $\Pi\in\mathbb{P}_{\tau_n}$ also belong to ${\mathbb D}_{\tau_n}$. For example,
for $n=4$ and $\tau_4=(2,2)$,
$\Pi=\begin{bmatrix}
   0 & I_2 \\
   I_2 & 0
\end{bmatrix}\in\mathbb{P}_{\tau_4}$ but $\Pi\not\in{\mathbb D}_{\tau_4}$.
In particular, any permutation matrix $\Pi\in\mathbb{R}^{n\times n}$ is in $\mathbb{P}_{\tau_n}$ when $\tau=(1,1,\ldots,1)$.
It can be proved that for given $\Pi\in\mathbb{P}_{\tau_n}$, there is a permutation $\pi$ if $\{1,2,\ldots,t\}$ such that
$$
\Pi^{\T} D \Pi\in\mathbb{D}_{\tau_n}=\diag(\Pi_1^{\T}D_{\pi(1)}\Pi_1,\Pi_2^{\T}D_{\pi(2)}\Pi_2,\ldots,\Pi_t^{\T}D_{\pi(t)}\Pi_t)
$$
for any
$D=\diag(D_1,D_2,\ldots,D_t)\in\mathbb{D}_{\tau_n}$. Specifically, the subblocks of $\Pi$, if partitioned as in \eqref{eq:tau-n-part},
are all $0$ blocks, except those at the positions $(\pi(j),j)$, which are $n_j\times n_j$ permutation matrices $\Pi_j$.
As a consequence, $n_j=n_{\pi(j)}$ for all $1\le j\le t$.

It is not hard to verify that if $W$ is a $\tau_n$-block diagonalizer of $\cA$, then so is
$WD\Pi$ for any given $D\in\mathbb{D}_{\tau_n}$ and $\Pi\in\mathbb{P}_{\tau_n}$.
In view of this, $\tau_n$-block diagonalizers, if exist, are not unique because any diagonalizer
brings out a class of {\em equivalent\/} diagonalizers in the form of $WD\Pi$.
For this reason, we introduce the following definition for uniquely block diagonalizable \jbdp.

\begin{definition}\label{dfn:us-jbdp}
Two $\tau_n$-block diagonalizers $W$ and $\wtd W$ of $\cA$ are {\em equivalent\/} if
there exist a nonsingular matrix $D\in\mathbb{D}_{\tau_n}$ and  $\Pi\in\mathbb{P}_{\tau_n}$
such that $\wtd W=WD\Pi$.
The \jbdp\  for $\cA$  is said {\em uniquely $\tau_n$-block diagonalizable\/}
if it has a $\tau_n$-block diagonalizer and if any two of its $\tau_n$-block diagonalizers are equivalent.
\end{definition}

To further reduce freedoms for the sake of comparing two diagonalizers,
we restrict our considerations of block diagonalizers to the matrix set:
\begin{equation}
\mathbb{W}_{\tau_n}:=\{W\in\mathbb{R}^{n\times n}\; : \; \mbox{$W$ is nonsingular and $\Bdiag_{\tau_n}(W^{\T}W)=I_n$}\}.
\end{equation}
This doesn't loss any generality because $W[\Bdiag_{\tau_n}(W^{\T}W)]^{-1/2}\in\mathbb{W}_{\tau_n}$
for any nonsingular $W\in\mathbb{R}^{n\times n}$.

\subsection{Perturbation Problem for \jbdp}
Let $\wtd\cA=\big\{\wtd A_i\big\}_{i=1}^m=\{A_i+\Delta A_i\}_{i=1}^m$,
where $\Delta A_i$ is a perturbation to $A_i$.
Assume $\cA=\{A_i\}_{i=1}^m$ is $\tau_n$-block diagonalizable and  $W\in\mathbb{W}_{\tau_n}$
is  a $\tau_n$-block diagonalizer and \eqref{eq:nojbd} holds. Let
$\ww\in\mathbb{W}_{\tau_n}$ be an approximate $\tau_n$-block diagonalizer of $\wtd\cA$
in the sense that all $\ww^{\T} \wtd A_i \ww$ are approximately $\tau_n$-block diagonal.
How much does $\wtd W$ differ from the block diagonalizer $W$ of $\cA$?

There are two important aspects that needs clarification regarding this perturbation problem.
First, $\wtd\cA$ may or may not be $\tau_n$-block diagonalizable.
Although allowing this counters the common sense that one can only gauge the difference between
diagonalizers that exist, it is for a good reason and important practically to allow this.
As we argued above, a generic \jbdp\ is usually
not block diagonalizable, and thus even if the \jbdp\ for $\cA$ has a diagonalizer, its arbitrarily perturbed problem is
potentially not block diagonalizable no
matter how tiny the perturbation may be. This  leads to an impossible task: to compare
the block diagonalizer $W$ of the unperturbed $\cA$, that does exist, to a diagonalizer $\ww$ of the perturbed matrix set $\wtd\cA$,
that may not exist.
We get around this dilemma by talking about an approximate diagonalizer of $\wtd\cA$, that always exist.
It turns out this workaround is exactly what some practical applications calls for because
most practical \jbdp\ come from block diagonalizable \jbdp\ but contaminated with noises to become
approximately block diagonalizable
and an approximate diagonalizer for the noisy \jbdp\  gets
computed numerically. In such a scenario, it is important to get a sense as how far the computed diagonalizer is from
the exact diagonalizer of the clean albeit unknown \jbdp, had the noises not presented.

The second aspect is about what metric to use in order to measure the difference between two block diagonalizers, given
that they are not unique. In view of Definition~\ref{dfn:us-jbdp} and
the discussion in the paragraph immediately proceeding it, we propose to use
\begin{equation}\label{eq:measure}
\min_{D\in\mathbb{D}_{\tau_n},  \Pi\in\mathbb{P}_{\tau_n}}\frac{\|W - \ww D\Pi\|}{\|\ww\|}
\end{equation}
for the purpose, where $\|\cdot\|$ is some matrix norm. Usually which norm to use is determined by the convenience
of any particular analysis, but for all practical purpose, any norm is just as good as another. In our theoretical analysis below,
we use both $\|\cdot\|_2$, the matrix spectral norm, and $\|\cdot\|_{\F}$, the matrix Frobenius norm \cite{demmel1997applied}, but
use only $\|\cdot\|_{\F}$ in our numerical tests because then \eqref{eq:measure} is computable.
Additionally, in using \eqref{eq:measure}, we usually restrict $W$ and $\wtd W$ to $\mathbb{W}_{\tau_n}$.

\subsection{A Case Study: MICA}
MICA \cite{cardoso1998multidimensional,poczos2005independent,theis2006towards}
aims at separating linearly mixed unknown sources into statistically independent
groups of signals.
A basic MICA model can be stated as
\begin{align}
x=M s+v,
\end{align}
where $x\in\mathbb{R}^n$ is the observed mixture,
$M\in\mathbb{R}^{n\times n}$ is a nonsingular matrix (often called {\em the mixing matrix\/}),
$s\in\mathbb{R}^n$ is the source signal,
and $v\in\mathbb{R}^n$ is the noise vector.

We would like to recover the source $s$ from
the observed mixture $x$.
Let $s = \big[s_1^{\T}, \dots , s_t^{\T} \big]^{\T}$ with $s_j\in\mathbb{R}^{n_j}$
for $j=1,2,\ldots,t$,
and $v=[\nu_1, \dots , \nu_n]^{\T}$.
Assume that all $s_j$ are independent of each other, and each
$s_j$ has mean $0$ and contains no lower-dimensional independent component,
and among all $s_j$, there exists at most one Gaussian component.
Assume further that the noises $\nu_1,\dots,\nu_n$ are real stationary white random signals,
mutually uncorrelated with the same variance $\sigma^2$,
and independent of the sources.
To recover the source signal $s$, it suffices to find $M$ or its inverse from the observed mixture $x$.
Notice that if $M$ is a solution, then so is $MD\Pi$, where $D$ is a block diagonal scaling matrix and
$\Pi$ is a block-wise permutation matrix. In this sense, there is certain degree of freedom
in the determination of $M$. Such indeterminacy of the solution is natural, and does not matter in applications.
We have the following statements.
\begin{enumerate}
\renewcommand{\labelenumi}{(\alph{enumi})}
\item  The covariance matrix
       $R_{xx}$ of $x$
      satisfies
      \begin{equation}\label{rxx}
      R_{xx}=\E(xx^{\T})=M\E(ss^{\T})M^{\T}+\E(vv^{\T})=MR_{ss}M^{\T}+\sigma^2 I,
      \end{equation}
      where $\E(\,\cdot\,)$ stands for the mathematical expectation,
      and $R_{ss}$ is the covariance matrix of $s$.
      By the above assumptions,
      we know that $R_{ss}\in\mathbb{D}_{\tau_n}$.
      Assume that $\sigma$ is accurately estimated as $\hat{\sigma}$. Then we have
      \begin{equation}\label{rxx2}
      R_{xx}-\hat{\sigma}^2 I \approx MR_{ss}M^{\T}.
      \end{equation}
      In particular, in the absence of  noises, i.e., $\sigma=0$,
      \eqref{rxx2} becomes an equality.

\item  The kurtosis\footnote{Other cumulants can also be considered.}  $\mathcal{C}_x^4$ of $x$ is a tensor of dimension $n\times n\times n\times n$.
      Fixing two indices, say the first two, and varying the last two, we have
      \begin{equation}
      \mathcal{C}_x^4(i_1,i_2,:,:)=M \mathcal{C}_s^4(i_1,i_2,:,:) M^{\T},
      \end{equation}
      where $\mathcal{C}_s^4$ is the kurtosis of $s$ and it can be shown that
      $ \mathcal{C}_s^4(i_1,i_2,:,:)\in\mathbb{D}_{\tau_n}$.
\end{enumerate}
Together, they result in a \jbdp\   for
$\wtd\cA=\{R_{xx}-\hat{\sigma} I\}\cup\{ \mathcal{C}_x^4(i_1,i_2,:,:)\}_{i_1,i_2=1}^n$.
$W:=M^{-\T}$ is an exact $\tau_n$-block diagonalizer when no noise is presented. When we attempt to
block-diagonalize $\wtd\cA$,
all we can do is to calculate an approximation $\ww$ of
$M^{-\T}D\Pi$ for some $D\in\mathbb{D}_{\tau_n}$ and $\Pi\in\mathbb{P}_{\tau_n}$,
which corresponds to the indeterminacy of MICA (even in the case when $\sigma=0$, i.e., there is no noise).

The point we try to make from this case study is that, in practical applications,
due to measurement errors, we only get to work with $\wtd\cA=\{\wtd A_i\}$ that are, in general,
only approximately block diagonalizable and, in the end, an approximate block diagonalizer $\wtd W$
of $\wtd\cA$ gets computed.
In the other word, we usually don't have $\cA$ which is known
block diagonalizable in theory but what we do have is $\wtd\cA$ which may or may not be block diagonalizable and
for which we have an approximate block diagonalizer $\wtd W$.
Then how far this $\wtd W$ is from the exact diagonalizer $W$
of $\cA$
becomes a central question, in order
to gauge the quality of $\wtd W$.
This is what we set out to do in this paper. Our result is an
upper bound on the measure in \eqref{eq:measure}.
Such an upper bound will also help us understand what are the inherent factors that
affect the sensitivity of \jbdp.

\subsection{Related works}
Though tremendous efforts have gone to solve \jdp/\jbdp,
their perturbation problems had received little or no attention in the past. In fact, today there
are only a handful articles written on the perturbations of \jdp\ only.
For \ojdp, Cardoso \cite{cardoso1998multidimensional} presented a first order perturbation
bound for a set of commuting matrices, and the result was later generalized by Russo \cite{russo2011argument}.
For general \jdp, using gradient flows, Afsari \cite{afsari2008sensitivity} studied  sensitivity
via cost functions and obtained first order perturbation bounds for the diagonalizer.
Shi and Cai \cite{shi2015some} investigated a normalized \jdp\  through a constrained optimization problem,
and obtained an upper bound on
certain distance between an approximate diagonalizer of a perturbed optimization problem and
an exact diagonalizer of the unperturbed optimization problem.

\jbdp\    can also be regarded as a particular case of the {\em block term decomposition\/} (BTD)
of third order tensors \cite{de2008decompositions, de2008decompositions2, de2008decompositions3, nion2011tensor}.
The uniqueness conditions of tensor decompositions,
which is strongly connected to the sensitivity of tensor decompositions,
received much attention recently (see, e.g., \cite{de2008decompositions2,   domanov2013uniqueness,domanov2013uniqueness2, kruskal1977three, sorensen2015new, sorensen2015coupled, stegeman2011uniqueness}).
However, perturbation theory for tensor decompositions,
often referred to as {\em identifiability of tensors}, up to now, is only discussed for the so-called
{\em canonical polyadic decomposition\/} (CPD) (see \cite{vannieuwenhoven2016condition} and references therein).
Perturbation theories for other models of tensor decompositions, e.g., the Tucker decomposition and BTD,
have not been touched yet.
More work is obviously needed in this area.

\subsection{Our contribution and the organization of this paper}
A biggest reason as to why no available perturbation analysis for \jbdp\  is, perhaps,
due to lacking perfect ways to uniquely describe block diagonalizers, not to mention no
available uniqueness condition to nail  them down, unlike many other matrix perturbation problems surveyed in \cite{li:2014HLA}.
Quite recently, in the sense of Definition~\ref{dfn:us-jbdp}, Cai and Liu \cite{cai2017algebraic}
established necessary and sufficient conditions for a \jbdp\ to be uniquely block diagonalizable.
These conditions are the cornerstone for our current investigation
in this paper.
Unlike the results in existing literatures,
the result in this paper does not involve any cost function,
which makes it widely applicable to any approximate diagonalizer computed from min/maximizing a cost function.
The result also reveals the inherent factors that affect the sensitivity of  \jbdp.

The rest of this paper is organized as follows.
In section~\ref{sec:uniq}, we discuss properties of a uniquely block diagonalizable \jbdp\ and
introduce the concepts of the moduli of uniqueness and non-divisibility that play key roles in
our later development. Our main result is presented in section~\ref{sec:main}, along
with detailed discussions on its numerous implications. The proof of the main result
is rather long and technical and thus is deferred to section~\ref{sec:proof}.
We validate our theoretical contributions by numerical tests reported in section~\ref{sec:numer}.
Finally, concluding remarks are given in section~\ref{sec:conclusion}.

\smallskip
{\bf Notation}.
${\mathbb R}^{m\times n}$ is the set of all $m\times n$ real matrices and ${\mathbb R}^m={\mathbb R}^{m\times 1}$.
$I_n$ is the $n\times n$ identity matrix, and
$0_{m\times n}$  is the $m$-by-$n$ zero matrix. When their sizes are clear from the context, we may simply write $I$ and $0$.
The symbol $\otimes$ denotes the Kronecker product.
The operation $\myvec(X)$ turns a matrix $X$ into a column vector formed by the first column of $X$ followed
by its second column and then its third column and so on.
Inversely, $\reshape(x,m,n)$ turns the $mn$-by-1 vector $x$ into an $m$-by-$n$ matrix in such a way that $\reshape(\myvec(X),m,n)=X$
for any $X\in{\mathbb R}^{m\times n}$.
The spectral norm and Frobenius norm 
of a matrix are denoted by $\|\cdot\|_2$ and $\|\cdot\|_{\F}$, 
respectively. For a square matrix $A$, $\lambda(A)$ is the set of all eigenvalues of $A$, counting algebraic multiplicities.
For convenience, we will agree that any matrix $A\in{\mathbb R}^{m\times n}$ has $n$ singular values
and $\sigma_{\min}(A)$ is the smallest one among all.



\section{Uniquely block diagonalizable \jbdp}\label{sec:uniq}

In \cite{cai2017algebraic}, a classification of {\jbdp} is proposed. Among all and besides the one
in subsection~\ref{ssec:jbd}, there is the so-called {\em general\/} \jbdp\ (\gjbdp) for $\cA$ for which
a partition $\tau_n$ is not given but instead it asks for finding a partition $\tau_n$ with the largest
cardinality such that $\cA$ is $\tau_n$-block diagonalizable and at the same time
a  $\tau_n$-block diagonalizer. Via an algebraic approach, necessary and sufficient conditions \cite[Theorem 2.5]{cai2017algebraic}
are obtained for
the uniqueness  of (equivalent) block diagonalizers of the \gjbdp\ for $\cA$. As a corollary, we have the following result.

\begin{theorem}[{\cite{cai2017algebraic}}]\label{thm:uniq}
Given partition $\tau_n$ of $n$, suppose that the \jbdp\ of $\cA=\{A_i\}_{i=1}^m$ is $\tau_n$-block
diagonalizable and $W$ is its $\tau_n$-block diagonalizer
satisfying \eqref{eq:nojbd}.
Let $\cA_j=\{A_i^{(jj)}\}_{i=1}^m$ for $j=1,2,\ldots,t$ and assume that every $\cA_j$ cannot be further block
diagonalized{\,}\footnote{
                 For the MICA model, this assumption is equivalent to say that
                 each component $s_j$ has no lower dimensional component.},
i.e., for any partition $\tau_{n_j}$ of $n_j$ with $\card(\tau_{n_j})\ge 2$, $\cA_j$ is not $\tau_{n_j}$-block diagonalizable.
Then  the \jbdp\   of $\cA=\{A_i\}_{i=1}^m$ is uniquely $\tau_n$-block diagonalizable
if and only if the matrix
\begin{align}\label{mjk}
M_{jk}=\sum_{i=1}^m\bsmat I_{n_k}\otimes\big[(A_i^{(jj)})^{\T} A_i^{(jj)} +A_i^{(jj)} (A_i^{(jj)})^{\T}\big] &
                                 A_i^{(kk)}\otimes A_i^{(jj)} +(A_i^{(kk)})^{\T}\otimes (A_i^{(jj)})^{\T}\\
                         \quad A_i^{(kk)}\otimes A_i^{(jj)} +(A_i^{(kk)})^{\T}\otimes (A_i^{(jj)})^{\T} &
                              \quad \big[(A_i^{(kk)})^{\T} A_i^{(kk)} +A_i^{(kk)} (A_i^{(kk)})^{\T}\big] \otimes I_{n_j}
                   \esmat
\end{align}
is nonsingular for all $1\le j< k \le t$.
\end{theorem}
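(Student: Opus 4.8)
The plan is to normalize by the known diagonalizer, reduce the question to understanding which nonsingular congruences keep a block-diagonal family block diagonal, prove the ``if'' part by an explicit construction, and derive the ``only if'' part from the classification of block diagonalizers in \cite{cai2017algebraic}. As a first step I would reduce to $W=I$: replacing every $A_i$ by $W^{\T}A_iW$ changes neither the hypotheses nor the matrices $M_{jk}$ (these involve only the diagonal blocks $A_i^{(jj)}$), and $V\mapsto WV$ is a bijection between the $\tau_n$-block diagonalizers of $\{W^{\T}A_iW\}$ and those of $\cA$ that carries $\mathbb{D}_{\tau_n}\mathbb{P}_{\tau_n}$-equivalence to $\mathbb{D}_{\tau_n}\mathbb{P}_{\tau_n}$-equivalence. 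So I may assume $A_i=\diag(A_i^{(11)},\dots,A_i^{(tt)})$ for all $i$; then $I$ is a diagonalizer, and ``uniquely $\tau_n$-block diagonalizable'' says exactly that every nonsingular $V$ with all $V^{\T}A_iV\in\mathbb{D}_{\tau_n}$ belongs to $\mathbb{D}_{\tau_n}\mathbb{P}_{\tau_n}$.

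Next I would pin down what $M_{jk}$ controls. For $j\ne k$ and a matrix $Z$ supported only on its $(j,k)$- and $(k,j)$-blocks, put $Y=Z^{(jk)}$ and $X=Z^{(kj)}$; a direct block computation shows (i) the only possibly nonzero off-block-diagonal blocks of $Z^{\T}A_i+A_iZ$ are the $(j,k)$-block $A_i^{(jj)}Y+X^{\T}A_i^{(kk)}$ and the $(k,j)$-block $A_i^{(kk)}X+Y^{\T}A_i^{(jj)}$, and (ii) $Z^{\T}A_iZ$ is itself $\tau_n$-block diagonal. Vectorizing the two blocks in (i) and using $\myvec(PQ)=(I\otimes P)\myvec(Q)$ together with the commutation-matrix identity for $\myvec(Q^{\T})$, one checks that $M_{jk}$ is symmetric positive semidefinite and is, up to a fixed permutation of coordinates, the Gram matrix of this linear map; hence $M_{jk}$ is nonsingular if and only if $(Y,X)=(0,0)$ is the only pair satisfying $A_i^{(jj)}Y+X^{\T}A_i^{(kk)}=0$ and $A_i^{(kk)}X+Y^{\T}A_i^{(jj)}=0$ for every $i$.

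With this in hand the ``only if'' implication is the contrapositive. Suppose some $M_{jk}$ is singular; pick $(Y,X)\ne(0,0)$ in the common kernel just described, let $Z$ be the corresponding pair-supported matrix, and set $V(s)=I+sZ$. By (i) and (ii), $\OffBdiag_{\tau_n}\!\big(V(s)^{\T}A_iV(s)\big)=s\,\OffBdiag_{\tau_n}(Z^{\T}A_i+A_iZ)+s^{2}\OffBdiag_{\tau_n}(Z^{\T}A_iZ)=0$ for every $i$ and every $s$, so $V(s)$ is a genuine $\tau_n$-block diagonalizer whenever it is nonsingular, i.e.\ for all small $s$. But $V(s)\notin\mathbb{D}_{\tau_n}$ when $s\ne0$, and since $\mathbb{P}_{\tau_n}$ is finite and $\mathbb{D}_{\tau_n}$ is closed, for small $s\ne0$ any factorization $V(s)=D\Pi$ with $D\in\mathbb{D}_{\tau_n}$ nonsingular and $\Pi\in\mathbb{P}_{\tau_n}$ would force $\Pi\in\mathbb{D}_{\tau_n}$, hence $V(s)=D\Pi\in\mathbb{D}_{\tau_n}$, a contradiction. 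Thus $V(s)$ is not equivalent to $I$, and the \jbdp\ is not uniquely $\tau_n$-block diagonalizable.

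For the ``if'' implication, assume all $M_{jk}$ are nonsingular. Computation (i) identifies the tangent space at $I$ of the set of diagonalizers normalized to $\mathbb{W}_{\tau_n}$ with $\{Z:Z^{(jj)}\ \text{skew},\ (Z^{(jk)},Z^{(kj)})\in\ker M_{jk}\ \forall\,j<k\}$, which the hypothesis collapses onto the tangent space of the orbit $\{D\Pi\}$ of $I$. To upgrade this infinitesimal rigidity to the asserted global uniqueness I would invoke \cite{cai2017algebraic}: the assumption that no $\cA_j$ can be further block diagonalized ensures that the general \jbdp\ for $\cA$ has $\tau_n$ (up to equivalence) as its optimal partition, so the equivalence classes of $\tau_n$-block diagonalizers of $\cA$ coincide with those of the general \jbdp\ treated there; \cite[Theorem~2.5]{cai2017algebraic} then settles their uniqueness, and it remains only to verify — once more through the vectorization identities above — that its condition is precisely the nonsingularity of every $M_{jk}$. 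I expect this ``if'' direction to be the main obstacle: passing from the infinitesimal condition $\ker M_{jk}=\{0\}$ (which a priori only sees diagonalizers of the shape $I+Z$ with $Z$ supported on a single pair of off-diagonal blocks) to the statement that no non-equivalent diagonalizer exists anywhere genuinely requires the algebraic apparatus of \cite{cai2017algebraic} — in essence, using a generic pencil $\sum_i c_iA_i$ to recover the block partition and then stripping an arbitrary diagonalizer down to block-triangular, and finally block-diagonal, form one pair at a time — rather than local calculus alone.
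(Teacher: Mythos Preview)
The paper does not supply its own proof of this theorem: it is quoted as a corollary of \cite[Theorem~2.5]{cai2017algebraic}, and the surrounding text only records the identity $M_{jk}=G_{jk}^{\T}G_{jk}$ together with the equivalent kernel conditions \eqref{zjk}. So there is no in-paper argument to benchmark against; what can be compared is how your proposal lines up with those recorded facts and with the structure of the cited result.

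On that score your proposal is sound and goes somewhat further than the paper. Your identification of $M_{jk}$ as (a coordinate permutation of) the Gram matrix $G_{jk}^{\T}G_{jk}$, and the translation of its nonsingularity into the vanishing of all pairs $(Y,X)$ satisfying $A_i^{(jj)}Y+X^{\T}A_i^{(kk)}=0$ and $A_i^{(kk)}X+Y^{\T}A_i^{(jj)}=0$, is exactly the content of the paper's equations \eqref{eq:Zjk}--\eqref{hatmjk} after the substitution $U=Y$, $V=X^{\T}$. Your ``only if'' direction via $V(s)=I+sZ$ is a genuine addition: the paper does not write this down, and your two computations (the linear term vanishes by the kernel condition, and $Z^{\T}A_iZ$ is automatically $\tau_n$-block diagonal because $Z$ is supported on a single off-diagonal pair) are correct. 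The non-equivalence of $V(s)$ to $I$ for small $s\neq0$ is also fine, though the cleanest way to say it is that any $D\Pi$ with $\Pi\in\mathbb{P}_{\tau_n}$ corresponding to a nontrivial block permutation has at least one zero diagonal block, hence is at distance $\ge 1$ from $I$, whereas $V(s)\to I$; this forces $\Pi\in\mathbb{D}_{\tau_n}$ and then $V(s)\in\mathbb{D}_{\tau_n}$, a contradiction.

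For the ``if'' direction you correctly flag that the infinitesimal calculation only pins down the tangent space and does not by itself exclude non-equivalent diagonalizers far from $I$; you then defer to \cite[Theorem~2.5]{cai2017algebraic}, which is precisely what the present paper does. That is the right call, and your closing sketch of what that algebraic argument entails (generic pencils, reduction to block-triangular form) is an accurate summary of the method in \cite{cai2017algebraic}.
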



The following subspace of $\mathbb{R}^{n\times n}$
\begin{equation}\label{na}
\na:=\big\{Z\in\mathbb{R}^{n\times n}\; :\; A_iZ-Z^{\T}A_i=0\,\,\mbox{for $1\le i\le m$}\big\}
\end{equation}
has played an important role in the proof of  \cite[Theorem 2.5]{cai2017algebraic},
and it will also contribute to our perturbation analysis later  in a big way.

Next, let us examine some fundamental properties of $Z\in\na$ with
\begin{equation}\label{eq:calA-JBD}
A_i=\diag(A_i^{(11)},\dots, A_i^{(tt)})\quad\mbox{for $1\le i\le m$}
\end{equation}
already. Any $Z\in\na$ satisfies
\begin{equation}\label{azza}
\diag(A_i^{(11)},\dots,A_i^{(tt)})Z - Z^{\T} \diag(A_i^{(11)},\dots,A_i^{(tt)})=0
\quad\mbox{for $1\le i\le m$}.
\end{equation}
Partition $Z$ conformally as $Z=[Z_{jk}]$, where $Z_{jk}\in\mathbb{R}^{n_j\times n_k}$.
Blockwise, \eqref{azza} can be rewritten as
\begin{equation}\label{azzajk}
A_i^{(jj)} Z_{jk}  - Z_{kj}^{\T} A_i^{(kk)} =0 \mbox{ for } 1\le i\le m,\,\, 1\le j,k\le t.
\end{equation}
These equations can be decoupled into
\begin{subequations}\label{eq:decouple-azzajk}
\begin{equation}\label{zjj}
A_i^{(jj)} Z_{jj}  - Z_{jj}^{\T} A_i^{(jj)} =0\quad\mbox{for $1\le i\le m$}
\end{equation}
and for $1\le j\le t$, and
\begin{equation}\label{zjk}
A_i^{(jj)} Z_{jk}  - Z_{kj}^{\T} A_i^{(kk)} =0, \quad
A_i^{(kk)} Z_{kj}  - Z_{jk}^{\T} A_i^{(jj)} =0\quad\mbox{for $1\le i\le m$}
\end{equation}
\end{subequations}
and for $1\le j< k\le t$.

Consider first \eqref{zjk}. Together they are
 equivalent to
\begin{subequations}\label{eq:Zjk}
\begin{equation}
G_{jk}\begin{bmatrix} \hm\myvec(Z_{jk})\\  -\myvec(Z_{kj}^{\T}) \end{bmatrix}=0,
\end{equation}
where
\begin{equation}\label{hatmjk}
G_{jk}=
\left[\begin{array}{lr}
       I_{n_k}\otimes A_1^{(jj)} & (A_1^{(kk)})^{\T}\otimes I_{n_j}\\
       I_{n_k}\otimes (A_1^{(jj)})^{\T} & A_1^{(kk)}\otimes I_{n_j}\\
       \qquad\vdots & \vdots\qquad\hm\\
       I_{n_k}\otimes A_m^{(jj)} & (A_m^{(kk)})^{\T}\otimes I_{n_j}\\
       I_{n_k}\otimes (A_m^{(jj)})^{\T} & A_m^{(kk)}\otimes I_{n_j}
\end{array}\right].
\end{equation}
\end{subequations}
Notice that $M_{jk}$ defined in \eqref{mjk} simply equals to $G_{jk}^{\T}G_{jk}$.
Thus, according to Theorem~\ref{thm:uniq}, $\cA$  is uniquely $\tau_n$-block diagonalizable if and only if
the smallest singular value  $\sigma_{\min}(G_{jk})>0$, provided all $\cA_j$ cannot be further block
diagonalized.

Next, we note that \eqref{zjj}  is equivalent to
\begin{subequations}\label{eq:Zjj}
\begin{equation}\label{mzjj}
G_{jj}\myvec(Z_{jj})=0,
\end{equation}
where
\begin{equation}\label{hatmjj}
G_{jj}=\begin{bmatrix} I_{n_j}\otimes A_1^{(jj)} - \big[(A_1^{(jj)})^{\T}\otimes I_{n_j}\big]\Pi_j\\
                              \vdots \\
                        I_{n_j}\otimes A_m^{(jj)} - \big[(A_m^{(jj)})^{\T}\otimes I_{n_j}\big]\Pi_j
                 \end{bmatrix},
\end{equation}
\end{subequations}
and $\Pi_j\in\mathbb{R}^{n_j^2}$ is the perfect shuffle permutation matrix \cite[Subsection 1.2.11]{van1996matrix}
that enables
$\Pi_j\myvec(Z_{jj}^{\T})=\myvec(Z_{jj})$.

\begin{theorem}\label{thm:calA-JBD}
Suppose $\cA=\{A_i\}_{i=1}^m$ is already in the \jbd\ form with respect to $\tau_n=(n_1,\ldots,n_t)$, i.e.,
$A_i$ are given by \eqref{eq:calA-JBD}. The following statements hold.
\begin{enumerate}
\renewcommand{\labelenumi}{(\alph{enumi})}
\item $G_{jj}\myvec(I_{n_j})=0$, i.e., $G_{jj}$ is rank-deficient;

\item $\cA_j$ cannot be further block diagonalized
      if and only if for any $Z_{jj}\in\mathscr{N}(\cA_j)$,
      its eigenvalues  are either a single real number or
      a single pair of two complex conjugate numbers.

\item If\/ $\dim\mathscr{N}(\cA_j)=1$ which means either $n_j=1$ or
      the second smallest singular value of $G_{jj}$ is positive,
      then $\cA_j$ cannot be further block diagonalized.
\end{enumerate}
\end{theorem}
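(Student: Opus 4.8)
The plan is to handle the three parts in sequence; part~(b) is the substantive one and parts~(a) and~(c) come out quickly once the right dictionary between $\mathscr{N}(\cA_j)$ and $G_{jj}$ is in place. For part~(a) I would first record that dictionary: $G_{jj}\myvec(Z)=0$ if and only if $Z\in\mathscr{N}(\cA_j)$, which is just the vec-identity $\myvec(A_i^{(jj)}Z-Z^{\T}A_i^{(jj)})=\bigl(I_{n_j}\otimes A_i^{(jj)}-[(A_i^{(jj)})^{\T}\otimes I_{n_j}]\Pi_j\bigr)\myvec(Z)$ stacked over $i$ (already used implicitly in deriving \eqref{eq:Zjj}). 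Since $A_i^{(jj)}I_{n_j}-I_{n_j}^{\T}A_i^{(jj)}=0$ for all $i$, we get $I_{n_j}\in\mathscr{N}(\cA_j)$, hence $G_{jj}\myvec(I_{n_j})=0$ with $\myvec(I_{n_j})\neq0$, so $G_{jj}$ is rank-deficient.

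For part~(b) the engine is that $\mathscr{N}(\cA_j)$ is closed under forming real polynomials of a single one of its elements: if $Z\in\mathscr{N}(\cA_j)$ then $A_i^{(jj)}Z^k=(A_i^{(jj)}Z)Z^{k-1}=Z^{\T}(A_i^{(jj)}Z^{k-1})=(Z^k)^{\T}A_i^{(jj)}$ for all $k\ge0$ by induction, and $\mathscr{N}(\cA_j)$ is a subspace containing $I_{n_j}=Z^0$, so $p(Z)\in\mathscr{N}(\cA_j)$ for every $p\in\mathbb{R}[x]$. I would then prove both implications by contraposition. If $\cA_j$ can be further block diagonalized there is a nonsingular $V$ with $V^{\T}A_i^{(jj)}V=\diag(\widehat A_i^{(1)},\widehat A_i^{(2)})$ for all $i$ (grouping into two blocks of sizes $r$ and $n_j-r$ if needed), and then $Z:=V\diag(I_r,0)V^{-1}$ lies in $\mathscr{N}(\cA_j)$ — a one-line check using $A_i^{(jj)}=V^{-\T}\diag(\widehat A_i^{(1)},\widehat A_i^{(2)})V^{-1}$ and the commuting of $\diag(I_r,0)$ with $\diag(\widehat A_i^{(1)},\widehat A_i^{(2)})$ — while $Z$ has the two distinct eigenvalues $0$ and $1$. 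Conversely, suppose some $Z\in\mathscr{N}(\cA_j)$ has spectrum that is neither a single real number nor a single conjugate pair; equivalently, the characteristic polynomial of $Z$ is not a power of one irreducible (degree $1$ or $2$) real polynomial, hence has at least two distinct irreducible real factors. By the Chinese Remainder Theorem in $\mathbb{R}[x]$ one may choose $p\in\mathbb{R}[x]$ with $P:=p(Z)$ satisfying $P^{2}=P$ and $0\neq P\neq I_{n_j}$ (take $p\equiv1$ modulo one prime-power factor of the minimal polynomial of $Z$ and $p\equiv0$ modulo the rest). Then $P\in\mathscr{N}(\cA_j)$, so $A_i^{(jj)}P=P^{\T}A_i^{(jj)}$ for all $i$. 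Choosing a nonsingular $V=[V_1\,|\,V_2]$ whose first block of columns spans the column space of $P$ and whose second block spans the column space of $I_{n_j}-P$, one has $PV=V\diag(I_r,0)$ with $r=\rank P\in(0,n_j)$; conjugating $A_i^{(jj)}P=P^{\T}A_i^{(jj)}$ by $V$ gives $\widehat A_i\diag(I_r,0)=\diag(I_r,0)\widehat A_i$ with $\widehat A_i:=V^{\T}A_i^{(jj)}V$, which forces the $(1,2)$ and $(2,1)$ blocks of every $\widehat A_i$ to vanish. Hence $V^{\T}A_i^{(jj)}V$ is block diagonal for the partition $(r,n_j-r)$ and $\cA_j$ can be further block diagonalized. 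Combining the two contrapositives proves~(b).

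For part~(c), the dictionary in part~(a) gives $\dim\mathscr{N}(\cA_j)=n_j^2-\rank G_{jj}$, which equals the number of zero singular values of $G_{jj}$ (and $G_{jj}$ has $n_j^2$ singular values); by~(a) this number is $\ge1$. Therefore $\dim\mathscr{N}(\cA_j)=1$ precisely when $n_j=1$ (so $G_{jj}$ has only one singular value, which is then $0$) or, for $n_j\ge2$, when the second smallest singular value of $G_{jj}$ is positive; this is the stated parenthetical. When $\dim\mathscr{N}(\cA_j)=1$, part~(a) forces $\mathscr{N}(\cA_j)=\subspan\{I_{n_j}\}$, whose elements all have a single real eigenvalue, so by part~(b) $\cA_j$ cannot be further block diagonalized.

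I expect the real obstacle to be the ``$\Rightarrow$'' half of~(b): the crucial realizations are that $\mathscr{N}(\cA_j)$ contains a nontrivial idempotent as soon as it contains any element with a ``reducible'' spectrum, and that the symmetry relation $A_i^{(jj)}P=P^{\T}A_i^{(jj)}$ upgrades the obvious block-\emph{triangularization} a projector provides into a genuine simultaneous block-\emph{diagonalization}. The two supporting points that need care are the polynomial-closure of $\mathscr{N}(\cA_j)$ and the precise translation of ``spectrum is a single real number or a single conjugate pair'' into ``characteristic polynomial is a prime power over $\mathbb{R}$''; everything else is routine linear algebra.
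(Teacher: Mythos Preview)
Your proof is correct. Parts~(a) and the ``$\Leftarrow$'' half of~(b) match the paper's argument almost verbatim, but the ``$\Rightarrow$'' half of~(b) and part~(c) take a genuinely different route.

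For the ``$\Rightarrow$'' direction of~(b), the paper proceeds more directly: it writes $Z_{jj}=W_j\diag(D_1^{(j)},D_2^{(j)})W_j^{-1}$ as a real block decomposition with $\lambda(D_1^{(j)})\cap\lambda(D_2^{(j)})=\emptyset$, substitutes into $A_i^{(jj)}Z_{jj}=Z_{jj}^{\T}A_i^{(jj)}$, and reads off from the resulting Sylvester-type equations that the off-diagonal blocks of $W_j^{\T}A_i^{(jj)}W_j$ vanish. Your argument is more algebraic: you first observe that $\mathscr{N}(\cA_j)$ is closed under real polynomials of any one of its elements, then use the Chinese Remainder Theorem on the minimal polynomial of $Z$ to manufacture a nontrivial idempotent $P\in\mathscr{N}(\cA_j)$, and finally exploit $A_i^{(jj)}P=P^{\T}A_i^{(jj)}$ to turn the block-triangularization coming from a projector into a block-diagonalization. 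The paper's route is shorter and avoids the polynomial-closure lemma; your route has the advantage of making explicit a structural fact about $\mathscr{N}(\cA_j)$ (closure under $p\mapsto p(Z)$) that is interesting in its own right and makes the mechanism transparent. For part~(c), the paper argues independently by contradiction (constructing two linearly independent elements of $\mathscr{N}(\cA_j)$ from a hypothetical further block diagonalization), whereas you derive~(c) as an immediate corollary of~(a) and~(b) via $\mathscr{N}(\cA_j)=\subspan\{I_{n_j}\}$; your derivation is cleaner and ties the three parts together more tightly.
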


\begin{proof}
Item (a) holds because $Z=I_{n_j}$ clearly satisfies \eqref{zjj}.

For item (b), we will prove both sufficiency and necessity by contradiction.

($\Rightarrow$) Suppose there exists a $Z_{jj}\in\mathscr{N}(\cA_j)$ such that its eigenvalues are
neither a single real number nor a single pair of two complex conjugate numbers.
Then $Z_{jj}$ can be decomposed into $Z_{jj}=W_j \diag(D_1^{(j)},D_2^{(j)})W_j^{-1}$,
where $W_j$, $D_1^{(j)}$, $D_2^{(j)}$ are all real matrices and $\lambda(D_1^{(j)})\cap\lambda(D_2^{(j)})=\emptyset$.
Then substituting the decomposition into \eqref{zjj}, we can conclude that
$W_j^{\T}A_i^{(jj)}W_j$ for $i=1,2,\ldots,m$ are all block diagonal matrices,
contradicting to that $\cA_j$ cannot be further block
diagonalized.

($\Leftarrow$)
Assume, to the contrary, that $\cA_j$ can be further block diagonalized, i.e.,
there exists a nonsingular $W_j$
such that $W_j^{\T} A_i^{(jj)} W_j =\diag(B_i^{(j1)}, B_i^{(j2)})$,
where $B_i^{j1}$, $B_i^{(j2)}$ are of order $n_{j1}$ and $n_{j2}$, respectively.
Then
$$
Z_{jj}=W_j^{-1} \diag(\gamma_1 I_{n_{j1}},\gamma_2 I_{n_{j2}})W_j\in\mathscr{N}(\cA_j),
$$
where $\gamma_1$, $\gamma_2$ are arbitrary real numbers.
That is that some $Z_{jj}\in\mathscr{N}(\cA_j)$ can have distinct real eigenvalues, a contradiction.

Lastly for item (c), assume, to the contrary, that  $\cA_j$ can be further block diagonalized.
Without loss of generosity, we may assume that
there exists a nonsingular matrix $W_j\in\mathbb{R}^{n_j\times n_j}$ such that
$W_j^{\T} A_i^{(jj)} W_j=\diag(A_{i}^{(jj1)},A_i^{(jj2)})$ for $i=1,2,\ldots,m$,
where $A_i^{(jj1)}$ and $A_i^{(jj2)}$ are respectively of order $n_{j1}$ and $n_{j2}$.
Then \eqref{zjj} has at least two linearly independent solutions  $W_j\diag(I_{n_{j1}},0)W_j^{-1}$, $W_j\diag(0,I_{n_{j2}})W_j^{-1}$.
Therefore, \eqref{mzjj} has two linearly independent solutions, which implies that
the second smallest singular value of the coefficient matrix $G_{jj}$ must be $0$, a contradiction.
\end{proof}

In view of Theorems~\ref{thm:uniq} and \ref{thm:calA-JBD}, we introduce the moduli of uniqueness and non-divisibility for
$\tau_n$-block diagonalizable $\cA$.

\begin{definition}\label{def:sig}
Let $W\in\mathbb{W}_{\tau_n}$ be a $\tau_n$-block diagonalizer of $\cA=\{A_i\}_{i=1}^m$
such that \eqref{eq:nojbd} holds, and let $\cA_j=\{A_i^{(jj)}\}_{i=1}^m$ for $j=1,2,\ldots,t$.
\begin{enumerate}
\renewcommand{\labelenumi}{(\alph{enumi})}
\item The {\em modulus of uniqueness\/} of the \jbdp\  for $\cA$ with respective to
      the $\tau_n$-block diagonalizer $W$ is defined by
      \begin{equation}\label{sigu}
      \omega_{\uniq}\equiv\omega_{\uniq}(\cA; W)=\min_{1\le j<k\le t} \sigma_{\min}(G_{jk}),
      \end{equation}
      where  $G_{jk}$ is given by \eqref{hatmjk}.
\item Suppose that none of $\cA_j$ can be further block diagonalized. The {\em modulus of non-divisibility\/}
      $\omega_{\robu}\equiv\omega_{\robu}(\cA; W)$ of the \jbdp\  for $\cA$ with respective to
      the $\tau_n$-block diagonalizer $W$  is defined by
      $\omega_{\robu}=\infty$ if $\tau_n=(1,1,\ldots,1)$ and
      \begin{equation}\label{sigr}
      \omega_{\robu}
         =\min_{n_j>1 }\{\mbox{the smallest nonzero singular value of $G_{jj}$}\},
      \end{equation}
      otherwise, where  $G_{jj}$ is given by \eqref{hatmjj}.
\end{enumerate}
\end{definition}

Note the notion of the modulus of non-divisibility is defined under the condition that none of $\cA_j$ can be further block diagonalized.
It is needed because in order for
\eqref{sigr} to be well-defined, we need to make sure that $G_{jj}$ has at least one nonzero singular value
in the case when $n_j>1$. In deed, $G_{jj}\ne 0$ whenever $n_j>1$, if none of $\cA_j$ can be further block diagonalized.
To see this, we note
$G_{jj}=0$ implies that any matrix $Z_{jj}$ of order $n_j$ is a solution to \eqref{zjj} and thus
$A_i^{(jj)}$ for $1\le i\le m$ are diagonal, which means that $\cA_j$ can be
further (block) diagonalized. This contradicts to the assumption that none of $\cA_j$ can be further block diagonalized.

The corollary below partially justifies Definition~\ref{def:sig}.

\begin{corollary}\label{cor:modulus}
Let $W\in\mathbb{W}_{\tau_n}$ be a $\tau_n$-block diagonalizer of $\cA=\{A_i\}_{i=1}^m$
such that \eqref{eq:nojbd} holds, and let $\cA_j=\{A_i^{(jj)}\}_{i=1}^m$.
Suppose $\dim\mathscr{N}(\cA_j)=1$ for all $1\le j\le t$, and let $\sigma_{-2}^{(j)}$
be the second smallest singular value of $G_{jj}$ for $j=1,2,\ldots,t$ whenever $n_j>1$. Then the following statement holds.
\begin{enumerate}
\renewcommand{\labelenumi}{(\alph{enumi})}
\item  $\cA$ is uniquely $\tau_n$-block diagonalizable if $\omega_{\uniq}(\cA; W)>0$.
\item None of $\cA_j$ can be further block diagonalized and
      $$
      \omega_{\robu}\equiv\omega_{\robu}(\cA; W)=\min_{n_j>1}\sigma_{-2}^{(j)}>0.
      $$
\end{enumerate}
\end{corollary}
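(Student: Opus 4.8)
The plan is to derive both parts of Corollary~\ref{cor:modulus} directly from Theorems~\ref{thm:uniq} and~\ref{thm:calA-JBD} together with the identity $M_{jk}=G_{jk}^{\T}G_{jk}$ noted after \eqref{hatmjj}. First I would dispense with part (b): under the hypothesis $\dim\mathscr{N}(\cA_j)=1$, Theorem~\ref{thm:calA-JBD}(c) applies verbatim and gives that none of the $\cA_j$ can be further block diagonalized, so the modulus $\omega_{\robu}$ is well-defined. It remains to identify the ``smallest nonzero singular value of $G_{jj}$'' in \eqref{sigr} with $\sigma_{-2}^{(j)}$ whenever $n_j>1$. This is a counting argument on the null space of $G_{jj}$: by Theorem~\ref{thm:calA-JBD}(a) we have $G_{jj}\myvec(I_{n_j})=0$, so $\dim\mathscr{N}(G_{jj})\ge 1$; on the other hand, by \eqref{eq:Zjj} the null space of $G_{jj}$ is isomorphic (via $\myvec$) to $\mathscr{N}(\cA_j)$, which has dimension exactly $1$ by assumption. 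Hence $G_{jj}$ has a \emph{simple} zero singular value, and therefore its smallest \emph{nonzero} singular value is precisely its second smallest singular value $\sigma_{-2}^{(j)}$. Taking the minimum over $j$ with $n_j>1$ yields $\omega_{\robu}=\min_{n_j>1}\sigma_{-2}^{(j)}$, and the positivity $\sigma_{-2}^{(j)}>0$ is automatic since the second smallest singular value of a matrix with a one-dimensional kernel is nonzero. (If $\tau_n=(1,\ldots,1)$, then $\omega_{\robu}=\infty$ by convention and the claimed minimum is over the empty set, consistent with the definition.)

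For part (a), I would argue as follows. The hypothesis $\omega_{\uniq}(\cA;W)>0$ means, by \eqref{sigu}, that $\sigma_{\min}(G_{jk})>0$ for all $1\le j<k\le t$, i.e.\ every $G_{jk}$ has full column rank. Since $M_{jk}=G_{jk}^{\T}G_{jk}$, this is equivalent to $M_{jk}$ being nonsingular for all $1\le j<k\le t$. To invoke Theorem~\ref{thm:uniq} I also need its standing hypothesis that every $\cA_j$ cannot be further block diagonalized --- but this is exactly what part (b) supplies from $\dim\mathscr{N}(\cA_j)=1$ via Theorem~\ref{thm:calA-JBD}(c). With both hypotheses in hand, Theorem~\ref{thm:uniq} concludes that the \jbdp\ of $\cA$ is uniquely $\tau_n$-block diagonalizable. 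One subtlety to address carefully: Theorem~\ref{thm:uniq} and Definitions~\ref{def:sig} implicitly assume $\cA$ is already in \jbd\ form, whereas here $\cA$ is only known to have $W$ as a diagonalizer; but the quantities $G_{jk}$, $G_{jj}$, and $M_{jk}$ are all built from the diagonal blocks $A_i^{(jj)}$ appearing in \eqref{eq:nojbd}, so applying the results to the conjugated set $\{W^{\T}A_iW\}$ (which shares the same solution-uniqueness property up to the equivalence in Definition~\ref{dfn:us-jbdp}) causes no loss of generality, and I would state this reduction explicitly in one sentence.

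The routine parts here are the linear-algebra equivalences ($G_{jk}$ full column rank $\iff$ $M_{jk}=G_{jk}^{\T}G_{jk}$ nonsingular, and the dimension count $\dim\mathscr{N}(G_{jj})=\dim\mathscr{N}(\cA_j)$), which follow immediately from \eqref{eq:Zjk}--\eqref{eq:Zjj} and basic facts about singular values. The only place that needs genuine care --- and what I expect to be the main obstacle --- is making sure the hypotheses of the cited Theorem~\ref{thm:uniq} are \emph{all} verified, in particular that the ``cannot be further block diagonalized'' condition is genuinely implied by $\dim\mathscr{N}(\cA_j)=1$ (which is Theorem~\ref{thm:calA-JBD}(c), so this is clean) and that the passage from ``$W$ is a diagonalizer'' to ``$\cA$ in \jbd\ form'' is legitimate. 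Once these bookkeeping points are settled, the corollary is essentially a transcription of the two theorems through the identity $M_{jk}=G_{jk}^{\T}G_{jk}$.
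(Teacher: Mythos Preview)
Your proposal is correct and follows exactly the route the paper intends: the corollary is stated without proof, as an immediate consequence of Theorems~\ref{thm:uniq} and~\ref{thm:calA-JBD} together with the identity $M_{jk}=G_{jk}^{\T}G_{jk}$. Your dimension count $\dim\mathscr{N}(G_{jj})=\dim\mathscr{N}(\cA_j)=1$ to pin down $\sigma_{-2}^{(j)}$ as the smallest nonzero singular value, and your observation that the non-divisibility hypothesis of Theorem~\ref{thm:uniq} is supplied by Theorem~\ref{thm:calA-JBD}(c), are precisely the bookkeeping the paper leaves implicit.
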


\begin{remark}
A few comments are in order.
\renewcommand{\labelenumi}{(\alph{enumi})}
\begin{enumerate}
\item The definition of $\omega_{\uniq}$  is a natural generation of the modulus of uniqueness in \cite{shi2015some}
      for \jdp\  (i.e., when $\tau_n=(1,1,\ldots,1)$).

\item By Theorem~\ref{thm:calA-JBD}(a), we know the smallest singular value of $G_{jj}$ is always $0$. Thus it
      seems natural that in defining $\omega_{\robu}$ in \eqref{sigr}, one would
      expect using the {\em second\/} smallest singular value of $G_{jj}$. It turns out that
      there are examples for which $\cA_j$ cannot be further block diagonalized and yet
      $\dim\mathscr{N}(\cA_j)=2$, i.e., the second smallest singular value of $G_{jj}$ is still $0$.

      Consider $A_i= \bsmat \alpha_i & \hm\beta_i \\ \beta_i & -\alpha_i\esmat$ for $i=1,2,\ldots,m$, where all
      $\alpha_i, \beta_i\ne 0\in{\mathbb R}$ and ${\alpha_i}/{\beta_i}$ are not a constant.
      Then $\cA=\{A_i\}_{i=1}^m$ cannot be simultaneously  diaognalized and
      $\mathscr{N}(\cA)=\subspan\{I_2,   \bsmat \hm 0 & 1 \\ -1 & 0\esmat\}$, i.e.,
      $\dim\mathscr{N}(\cA)=2$.


\end{enumerate}
\end{remark}

The moduli $\omega_{\uniq}$ and $\omega_{\robu}$, as defined in Definition~\ref{def:sig},
depend on the choice of the diaognalizer $W$. But, as the following theorem shows,  in the case when $\cA=\{A_i\}_{i=1}^m$
is uniquely $\tau_n$-block diagonalizable, their dependency on diagonalizer $W\in\mathbb{W}_{\tau_n}$ can be removed.

\begin{theorem}\label{thm:modulus}
If $\cA=\{A_i\}_{i=1}^m$ is uniquely $\tau_n$-block diagonalizable,
then $\omega_{\uniq}$ and $\omega_{\robu}$ are both independent of the choice of
diagonalizer $W\in\mathbb{W}_{\tau_n}$.
\end{theorem}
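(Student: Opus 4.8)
The plan is to show that two diagonalizers $W,\wtd W\in\mathbb{W}_{\tau_n}$ are related by $\wtd W = W D\Pi$ for a nonsingular $D\in\mathbb{D}_{\tau_n}$ and $\Pi\in\mathbb{P}_{\tau_n}$ (this is exactly the content of unique block diagonalizability, Definition~\ref{dfn:us-jbdp}), and then track how the matrices $G_{jk}$ and $G_{jj}$ transform under this change of diagonalizer. Since $\omega_{\uniq}$ and $\omega_{\robu}$ are built from singular values of the $G_{jk}$ and $G_{jj}$, it suffices to show that the relevant singular values are unchanged (or at worst permuted among the block-pairs, which does not affect the min) when passing from $W$ to $\wtd W$. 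So the first step is to write out the block-diagonal forms $W^{\T}A_iW=\diag(A_i^{(11)},\dots,A_i^{(tt)})$ and $\wtd W^{\T}A_i\wtd W=\diag(\wtd A_i^{(11)},\dots,\wtd A_i^{(tt)})$ and, using $\wtd W=WD\Pi$ with $D=\diag(D_1,\dots,D_t)$, relate the blocks: up to the block-permutation $\pi$ induced by $\Pi$ (with $n_j=n_{\pi(j)}$) and the $n_j\times n_j$ permutation factors $\Pi_j$ inside $\Pi$, we get $\wtd A_i^{(jj)} = \Pi_j^{\T}D_{\pi(j)}^{\T}A_i^{(\pi(j)\pi(j))}D_{\pi(j)}\Pi_j$. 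The key observation, which I would extract from \cite{cai2017algebraic} or re-derive via Theorem~\ref{thm:calA-JBD}, is that $\mathscr{N}(\cA_j)$ being one-dimensional forces each $D_j$ to be a \emph{scalar} multiple of an orthogonal matrix; combined with the normalization $\Bdiag_{\tau_n}(W^{\T}W)=\Bdiag_{\tau_n}(\wtd W^{\T}\wtd W)=I_n$, the scalar must have modulus $1$, so in fact each $D_j$ is orthogonal.

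Granting that each $D_j$ is orthogonal, the transformation $\wtd A_i^{(jj)} = Q_j^{\T}A_i^{(\pi(j)\pi(j))}Q_j$ with $Q_j:=D_{\pi(j)}\Pi_j$ orthogonal is an orthogonal congruence. The second step is then a direct computation: substituting $A_i^{(jj)}\mapsto Q_j^{\T}A_i^{(jj)}Q_j$ into the Kronecker expressions \eqref{hatmjk} and \eqref{hatmjj} for $G_{jk}$ and $G_{jj}$, and using $(X^{\T}YX)\otimes(U^{\T}VU)=(X\otimes U)^{\T}(Y\otimes V)(X\otimes U)$ together with $(Q_k\otimes Q_j)$ being orthogonal, one finds that $\wtd G_{jk}$ and $G_{\pi(j)\pi(k)}$ differ by left multiplication by a block-diagonal orthogonal matrix (coming from a permutation of the stacked blocks and a further orthogonal factor) and right multiplication by the orthogonal matrix $Q_k\otimes Q_j$ (and the analogous identity for $G_{jj}$, where one must also check that the perfect-shuffle $\Pi_j$-conjugation is compatible, since $\reshape$ of $Q_j^{\T}Z_{jj}Q_j$ corresponds to $(Q_j\otimes Q_j)$ acting on $\myvec$). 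Orthogonal transformations on the left and right preserve all singular values, so $\sigma_{\min}(\wtd G_{jk})=\sigma_{\min}(G_{\pi(j)\pi(k)})$ and the $r$-th singular value of $\wtd G_{jj}$ equals that of $G_{\pi(j)\pi(j)}$ for every $r$.

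The final step is to take minima: $\omega_{\uniq}(\cA;\wtd W)=\min_{j<k}\sigma_{\min}(\wtd G_{jk})=\min_{j<k}\sigma_{\min}(G_{\pi(j)\pi(k)})=\min_{j<k}\sigma_{\min}(G_{jk})=\omega_{\uniq}(\cA;W)$, where the third equality holds because $\pi$ merely permutes the set of unordered pairs $\{j,k\}$ (using that $G_{kj}$ is a row-reordering of $G_{jk}$, hence has the same singular values); and similarly $\omega_{\robu}(\cA;\wtd W)=\min_{n_j>1}\{\text{smallest nonzero }\sigma(\wtd G_{jj})\}=\min_{n_{\pi(j)}>1}\{\text{smallest nonzero }\sigma(G_{\pi(j)\pi(j)})\}=\omega_{\robu}(\cA;W)$, using $n_j=n_{\pi(j)}$. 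The main obstacle I anticipate is the first step — pinning down that $D$ must in fact be block-orthogonal rather than merely block-diagonal nonsingular. This is where unique block diagonalizability is essential: it is precisely the hypothesis that rules out the scaling freedom within blocks (beyond orthogonal scaling), and I would lean on Corollary~\ref{cor:modulus}/Theorem~\ref{thm:calA-JBD} to argue $\dim\mathscr{N}(\cA_j)=1$, forcing each $D_j$ to commute (after conjugation) with all $A_i^{(jj)}$ in the sense of \eqref{zjj}, hence to lie in a one-dimensional space spanned by a multiple of the identity; the normalization in $\mathbb{W}_{\tau_n}$ then fixes the scalar to be $\pm1$ per block. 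Once this structural fact is in hand, the rest is the routine Kronecker-product bookkeeping sketched above.
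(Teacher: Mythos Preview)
Your overall strategy --- relate two diagonalizers via $\wtd W = WD\Pi$, show each $D_j$ is orthogonal, then track how $G_{jk}$ and $G_{jj}$ transform under orthogonal congruences and take minima --- is correct and is exactly what the paper does. Your steps~2 and~3 match the paper's argument.

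The one place you go astray is precisely the point you flag as the ``main obstacle'': the argument you sketch for the orthogonality of $D_j$ does not work, and is unnecessary. First, unique $\tau_n$-block diagonalizability does \emph{not} imply $\dim\mathscr{N}(\cA_j)=1$; the Remark following Corollary~\ref{cor:modulus} gives an explicit example where $\cA_j$ cannot be further block diagonalized yet $\dim\mathscr{N}(\cA_j)=2$, so you cannot extract that hypothesis from Theorem~\ref{thm:calA-JBD} or Corollary~\ref{cor:modulus}. Second, and more fundamentally, there is no reason $D_j$ should satisfy \eqref{zjj}: for \emph{any} nonsingular block-diagonal $D$, the matrix $WD\Pi$ is already a $\tau_n$-block diagonalizer (since $D_j^{\T}A_i^{(jj)}D_j$ is a legitimate diagonal block with no commutation constraint), so being a diagonalizer places no restriction on $D_j$ beyond nonsingularity.

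The orthogonality of $D_j$ instead follows immediately from the normalization $W,\wtd W\in\mathbb{W}_{\tau_n}$, with no appeal to $\mathscr{N}(\cA_j)$ at all. Write $W=[W_1,\dots,W_t]$ so that $W_j^{\T}W_j=I_{n_j}$. From $\wtd W=WD\Pi$ one has $\wtd W^{\T}\wtd W=\Pi^{\T}D^{\T}W^{\T}WD\Pi$; the $(j,j)$ block of $D^{\T}W^{\T}WD$ is $D_j^{\T}W_j^{\T}W_jD_j=D_j^{\T}D_j$, and since $\Pi\in\mathbb{P}_{\tau_n}$ merely permutes these diagonal blocks, $\Bdiag_{\tau_n}(\wtd W^{\T}\wtd W)=I_n$ forces $D_j^{\T}D_j=I_{n_j}$ for every $j$. (This is exactly Lemma~\ref{lm:2Ws}.) With this simpler argument in place of your first step, the rest of your proposal goes through as written.
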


\begin{proof}
Let $W\in\mathbb{W}_{\tau_n}$ be a $\tau_n$-block diagonalizer of $\cA$. Then all
possible $\tau_n$-block diagonalizer of $\cA$ from $\mathbb{W}_{\tau_n}$ take the form $\ww=WD\Pi$
for some $D\in\mathbb{D}_{\tau_n}$ and $\Pi\in\mathbb{P}_{\tau_n}$.
We will show that $\omega_{\uniq}(\cA; \wtd W)=\omega_{\uniq}(\cA; W)$
and $\omega_{\robu}(\cA; \wtd W)=\omega_{\robu}(\cA; W)$.

We can write $D=\diag(D_1,\dots,D_t)$, where $D_j\in{\mathbb R}^{n_j\times n_j}$.
All $D_j$ are all orthogonal since $W,\,\ww\in\mathbb{W}_{\tau_n}$.
We have
\begin{align*}
\ww^{\T} A_i\ww
&=\Pi^{\T} \diag(D_1^{\T} A_i^{(11)}D_1,\dots,D_t^{\T} A_i^{(tt)}D_t)\Pi\\
&=\diag(\Pi_1^{\T} D_{\ell_1}^{\T} A_i^{(\ell_1\ell_1)}D_{\ell_1}\Pi_1, \dots,
\Pi_t^{\T} D_{\ell_t}^{\T} A_i^{(\ell_t \ell_t)}D_{\ell_t}\Pi_t),
\end{align*}
where $\{\ell_1,\ell_2,\dots,\ell_t\}$ is a permutation of $\{1,2,\dots,t\}$, and $\Pi_j$ is a
permutation matrix of order $n_j$ for $j=1,\dots, t$. Denote by
$\wtd A_i^{(jj)}=\Pi_j^{\T} D_{\ell_j}^{\T} A_i^{(\ell_j \ell_j)}D_{\ell_j}\Pi_j$, and
define $\widetilde{G}_{jk}$, accordingly as $G_{jk}$ in \eqref{hatmjk}, but in terms of $\wtd A_i^{(jj)}$
and $\wtd A_i^{(kk)}$,
$\widetilde{G}_{jj}$, accordingly as $G_{jj}$ in \eqref{hatmjj}, but in terms of $\wtd A_i^{(jj)}$.
Then by calculations, we have
\begin{align*}
\widetilde{G}_{jk}&=\big[I_{2m}\otimes(\Pi_kD_{\ell_k})^{\T}\otimes (\Pi_jD_{\ell_j})^{\T})\big]
                       G_{jk}\big[I_{2}\otimes(\Pi_kD_{\ell_k})\otimes (\Pi_jD_{\ell_j}))\big],\\
\widetilde{G}_{jj}&=\big[I_{m}\otimes(\Pi_jD_{\ell_j})^{\T}\otimes (\Pi_jD_{\ell_j})^{\T})\big]
                       G_{jj}\big[(\Pi_kD_{\ell_k})\otimes (\Pi_jD_{\ell_j})\big],
\end{align*}
which imply that the singular values of $\widetilde{G}_{jk}$ and $\widetilde{G}_{jj}$
are the same as those of $G_{jk}$ and $G_{jj}$, respectively.
The conclusion follows.
\end{proof}

\section{Main Perturbation Results}\label{sec:main}
In this section, we present our main theorem,
along with some illustrating examples and discussions on its implications. We defer its lengthy proof to section~\ref{sec:proof}.

\subsection{Set up the stage}\label{ssec:stage}
In what follows, we will set up the groundwork for our perturbation analysis and explain
some of our assumptions.

As before, $\cA=\{A_i\}_{i=1}^n$ is the upperturbed matrix set, where all $A_i\in{\mathbb R}^{n\times n}$, and
$\tau_n=(n_1,\ldots,n_t)$ is a partition of $n$ with $t\ge 2$. We assume that
\begin{equation}\label{eq:cA-assume}
\framebox{
\parbox{10cm}{
$\cA$ is
$\tau_n$-block diagonalizable, $W\in\mathbb{W}_{\tau_n}$ is its $\tau_n$-block diagonalizer such that \eqref{eq:nojbd}
holds, and, moreover, $\dim\mathscr{N}(\cA_j)=1$ for all $j$,
where $\cA_j=\{A_i^{(jj)}\}_{i=1}^m$ for $1\le j\le t$.
}
}
\end{equation}
The assumption that $\dim\mathscr{N}(\cA_j)=1$ implies that $\cA_j$ cannot be further block diagonalized
by Theorem~\ref{thm:calA-JBD}(c).

Suppose that $\cA=\{A_i\}_{i=1}^n$ is perturbed to
$\wtd\cA=\{\wtd A_i\}_{i=1}^m\equiv\{A_i+\Delta A_i\}_{i=1}^m$, and let
\begin{equation}\label{eq:pert-cA}
\|\cA\|_{\F}:=\left(\sum_{i=1}^m\|A_i\|_{\F}^2\right)^{1/2}, \quad
\delta_{\cA}:=\left(\sum_{i=1}^m\|\Delta A_i\|_{\F}^2\right)^{1/2}.
\end{equation}
Previously, we commented on that, more often than not, a generic \jbdp\ may
not be $\tau_n$-block diagonalizable for $m\ge 3$.
This means that $\wtd\cA$ may not be $\tau_n$-block diagonalizable
regardless how tiny $\delta_{\cA}$ may be.
For this reason, we will not assume that $\wtd\cA$ is $\tau_n$-block diagonalizable, but, instead, it has an
approximate $\tau_n$-block diagonalizer $\wtd W\in\mathbb{W}_{\tau_n}$ in the sense that
\begin{equation}\label{eq:tcA-almost-D}
\mbox{all $\wtd W^{\T}\wtd A_i\wtd W$ are nearly $\tau_n$-block diagonal}.
\end{equation}
Doing so has two advantages.
Firstly, it serves all practical purposes well, because in any likely
practical situations we usually end up with $\wtd\cA$ which is close to
some $\tau_n$-block diagonalizable $\cA$ that is not actually available due to unavoidable noises such as in MICA,
and, at the same time, an approximate $\tau_n$-block diagonalizer can be made available by computation.
Secondly, it is general enough to cover the case
when the \jbdp\ for $\wtd\cA$ is actually $\tau_n$-block diagonalizable.

We have to quantify the statement \eqref{eq:tcA-almost-D} in order to proceed.
To this end, we pick a diagonal matrix $\Gamma=\diag(\gamma_1 I_{n_1},\dots, \gamma_t I_{n_t})$,
where $\gamma_1,\dots,\gamma_t$ are distinct real numbers
with all $|\gamma_j|\le 1$, and define the $\tau_n$-block diagonalizablility residuals
\begin{equation}\label{eq:diag-res}
\wtd R_i=\ww^{\T}\wtd A_i \ww \Gamma- \Gamma \ww^{\T} \wtd A_i \ww\quad  \mbox{for } i=1,2,\ldots,m.
\end{equation}
Notice $\Bdiag_{\tau_n}(\wtd R_i)=0$ always no matter what $\Gamma$ is.
The rationale behind defining these residuals is in the following proposition.

\begin{proposition}\label{prop:diag-res}
$\wtd W^{\T}\wtd A_i\wtd W$ is $\tau_n$-block diagonal, i.e.,
$\OffBdiag_{\tau_n}(\wtd W^{\T}\wtd A_i\wtd W)=0$ if and only if $\wtd R_i=0$.
\end{proposition}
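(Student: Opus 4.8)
The plan is to show equivalence of two conditions on the fixed index $i$: that $B:=\ww^{\T}\wtd A_i\ww$ is $\tau_n$-block diagonal, and that $B\Gamma-\Gamma B=0$, where $\Gamma=\diag(\gamma_1 I_{n_1},\dots,\gamma_t I_{n_t})$ with the $\gamma_j$ distinct. Since $\ww$ is nonsingular, the first condition is exactly $\OffBdiag_{\tau_n}(B)=0$, so it suffices to prove: for a matrix $B\in{\mathbb R}^{n\times n}$ partitioned conformally as $B=[B_{jk}]$ with $B_{jk}\in{\mathbb R}^{n_j\times n_k}$, one has $B\Gamma=\Gamma B$ if and only if $B_{jk}=0$ for all $j\ne k$.

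The key computation is to write $B\Gamma-\Gamma B$ blockwise. Because $\Gamma$ is block scalar, the $(j,k)$ block of $B\Gamma$ is $\gamma_k B_{jk}$ and the $(j,k)$ block of $\Gamma B$ is $\gamma_j B_{jk}$, so the $(j,k)$ block of $B\Gamma-\Gamma B$ equals $(\gamma_k-\gamma_j)B_{jk}$. For $j=k$ this vanishes identically, which re-proves the observation already noted in the text that $\Bdiag_{\tau_n}(\wtd R_i)=0$ regardless of $\Gamma$. For $j\ne k$, since the $\gamma_\ell$ are chosen distinct we have $\gamma_k-\gamma_j\ne 0$, hence $(\gamma_k-\gamma_j)B_{jk}=0$ is equivalent to $B_{jk}=0$. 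Therefore $B\Gamma-\Gamma B=0$ if and only if every off-diagonal block $B_{jk}$ ($j\ne k$) vanishes, i.e. $B\in\mathbb{D}_{\tau_n}$.

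To finish, I would simply specialize $B=\ww^{\T}\wtd A_i\ww$: then $\wtd R_i=B\Gamma-\Gamma B$ by definition \eqref{eq:diag-res}, and $\OffBdiag_{\tau_n}(\ww^{\T}\wtd A_i\ww)=0$ if and only if $B\in\mathbb{D}_{\tau_n}$, which by the block identity above holds if and only if $\wtd R_i=0$. This establishes the proposition. There is no real obstacle here; the only point requiring care is to make explicit use of the hypothesis that $\gamma_1,\dots,\gamma_t$ are \emph{distinct} (and to note the normalization $|\gamma_j|\le 1$ plays no role in this particular statement — it is only needed later for the quantitative bounds), so that the scalar factors $\gamma_k-\gamma_j$ are invertible on each off-diagonal block.
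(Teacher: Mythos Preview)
Your proof is correct and is precisely the natural blockwise argument: the $(j,k)$ block of $B\Gamma-\Gamma B$ is $(\gamma_k-\gamma_j)B_{jk}$, which vanishes for $j\ne k$ iff $B_{jk}=0$ since the $\gamma_j$ are distinct. The paper does not actually supply a proof for this proposition---it is treated as evident---but the same block identity you use underlies the computation in the proof of the very next result (Proposition~\ref{prop:backerr}), where the off-diagonal blocks of $\wtd R_i$ are written as $(\gamma_k-\gamma_j)$ times the corresponding blocks of $\ww^{\T}\wtd A_i\ww$.
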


As far as this proposition is concerned, any diagonal $\Gamma$ with distinct diagonal entries suffices.
But later, we will see that our upper bound depends on $\Gamma$, which makes us wonder
what the best $\Gamma$ is for the best possible bound. Unfortunately, this is not a trivial task
and would be an interesting subject for future studies. We will return to this later in our numerical example
section. We restrict $\gamma_i$ to real numbers for consistency consideration since $\cA$ and $\wtd\cA$
are assumed real. All developments below work equally well even if they are complex.
For later use, we set
\begin{equation}\label{eq:res-err}
g=\min_{j\ne k} |\gamma_j-\gamma_k|,\quad
\tilde r=\left(\sum_{i=1}^m\|\wtd R_i\|_{\F}^2\right)^{1/2}.
\end{equation}

In addition to Proposition~\ref{prop:diag-res}, another benefit of defining the residuals $\wtd R_i$ can be seen
through backward error analysis. In fact, all $\wtd R_i$ being nearly zeros, i.e., tiny $\tilde r$, implies that
$\wtd\cA$ is nearby an exact $\tau_n$-block diagonalizable matrix set.

\begin{proposition}\label{prop:backerr}
$\ww$ is an exact $\tau_n$-block diagonalizer of the matrix set $\{\wtd A_i+E_i\}_{i=1}^m$
with relative backward error
\begin{equation}\label{eq:backerr2}
\frac {\|\cE\|_{\F}}{\|\wtd\cA\|_{\F}}\le\frac {\|\ww^{-1}\|_2^2}{\|\wtd\cA\|_{\F}}\cdot\frac {\tilde r}g
   =:\varepsilon_{\berr}(\wtd\cA;\ww),
\end{equation}
where $\cE=\{E_i\}_{i=1}^m$ which will be referred to as
the {\em backward perturbation\/} to $\wtd\cA$ with respect to the approximate diagonalizer $\ww$.
\end{proposition}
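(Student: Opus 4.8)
The plan is to exhibit the backward perturbation $\cE=\{E_i\}_{i=1}^m$ explicitly and then estimate its Frobenius norm. Set $B_i=\ww^{\T}\wtd A_i\ww$ and simply strip off its off-block-diagonal part by taking
\[
E_i=-\ww^{-\T}\,\OffBdiag_{\tau_n}(B_i)\,\ww^{-1},
\]
which is well defined and real since $\ww\in\mathbb{W}_{\tau_n}$ is nonsingular. Then $\ww^{\T}(\wtd A_i+E_i)\ww=B_i-\OffBdiag_{\tau_n}(B_i)=\Bdiag_{\tau_n}(B_i)$ is $\tau_n$-block diagonal by construction, so $\ww$ is an exact $\tau_n$-block diagonalizer of $\{\wtd A_i+E_i\}_{i=1}^m$. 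It remains to bound $\|E_i\|_{\F}$.

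Next I would reduce the estimate to the residuals $\wtd R_i$. Submultiplicativity of the norms gives $\|E_i\|_{\F}\le\|\ww^{-1}\|_2^2\,\|\OffBdiag_{\tau_n}(B_i)\|_{\F}$, so the task becomes controlling $\|\OffBdiag_{\tau_n}(B_i)\|_{\F}$ by $\|\wtd R_i\|_{\F}$. Partition $B_i=[B_i^{(jk)}]$ conformally with $\tau_n$. Since $\Gamma=\diag(\gamma_1 I_{n_1},\dots,\gamma_t I_{n_t})$ is block-scalar, conjugation by $\Gamma$ multiplies the $(j,k)$ block on the right by $\gamma_k$ and on the left by $\gamma_j$, so the $(j,k)$ block of $\wtd R_i=B_i\Gamma-\Gamma B_i$ equals $(\gamma_k-\gamma_j)B_i^{(jk)}$; in particular its block-diagonal part vanishes, consistent with the remark preceding Proposition~\ref{prop:diag-res}. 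Hence
\[
\|\wtd R_i\|_{\F}^2=\sum_{j\ne k}|\gamma_j-\gamma_k|^2\,\|B_i^{(jk)}\|_{\F}^2\ \ge\ g^2\sum_{j\ne k}\|B_i^{(jk)}\|_{\F}^2=g^2\,\|\OffBdiag_{\tau_n}(B_i)\|_{\F}^2,
\]
using $g=\min_{j\ne k}|\gamma_j-\gamma_k|$ from \eqref{eq:res-err}. Therefore $\|\OffBdiag_{\tau_n}(B_i)\|_{\F}\le\|\wtd R_i\|_{\F}/g$ and consequently $\|E_i\|_{\F}\le\|\ww^{-1}\|_2^2\,\|\wtd R_i\|_{\F}/g$.

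Finally I would assemble the pieces: squaring and summing over $i$ yields $\|\cE\|_{\F}=\big(\sum_{i=1}^m\|E_i\|_{\F}^2\big)^{1/2}\le(\|\ww^{-1}\|_2^2/g)\big(\sum_{i=1}^m\|\wtd R_i\|_{\F}^2\big)^{1/2}=\|\ww^{-1}\|_2^2\,\tilde r/g$, and dividing through by $\|\wtd\cA\|_{\F}$ gives the stated bound $\varepsilon_{\berr}(\wtd\cA;\ww)$. I do not expect a genuine obstacle here; the only mildly delicate point is the blockwise identity for $\wtd R_i$, which relies essentially on $\Gamma$ being constant on each diagonal block, and from there everything is bookkeeping with standard norm inequalities.
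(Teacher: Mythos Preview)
Your proof is correct and follows essentially the same construction as the paper: both take $E_i$ so that $\ww^{\T}E_i\ww$ kills the off-block-diagonal part of $\ww^{\T}\wtd A_i\ww$, and both exploit the blockwise identity $\wtd R_i^{(jk)}=(\gamma_k-\gamma_j)B_i^{(jk)}$ to pass from $\OffBdiag_{\tau_n}(B_i)$ to $\wtd R_i$ via the gap $g$. You simply spell out the norm estimate that the paper leaves implicit.
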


\begin{proof}
Partition $\wtd R_i$ as $\wtd R_i=\big[\wtd R_i^{(jk)}\big]$
with $\wtd R_i^{(jk)}\in\mathbb{R}^{n_j\times n_k}$.
Then \eqref{eq:diag-res} can be rewritten as
\begin{equation}\label{backerr}
\ww^{\T} (\wtd A_i +  E_i)\ww \Gamma - \Gamma \ww^{\T} (\wtd A_i +  E_i ) \ww=0,
\end{equation}
where $ E_i=\ww^{-\T}\big[E_i^{(jk)}\big]\ww^{-1}$ with $E_i^{(jj)}=0$ and
$E_i^{(jk)}=\frac{\wtd R_i^{(jk)}}{\gamma_k-\gamma_j}$ for $j\ne k$.
Let $\cE=\{E_i\}_{i=1}^m$ which satisfies \eqref{eq:backerr2}.
\end{proof}

\subsection{Main Result}\label{subsec:main}
With the setup, we are ready to state our main result.

\begin{theorem}\label{thm:main}
Adopt the setup in {\em subsection~\ref{ssec:stage}} up to \eqref{eq:diag-res}.
Let $Q=W^{-1}\ww$, and let $\omega_{\uniq}$ and $\omega_{\robu}$ be defined in {\em Definition~\ref{def:sig}},
and{}\footnote {Recall that $t\ge 2$. The quantity $\tau$ decreases as $t$ increases and thus $\tau\le\sqrt 2-1$. Since
    $\alpha$ increases as $\tau$ does, $\alpha$ decreases as $t$ increases and thus
    $\alpha\le 2(\sqrt 2-1)/(2\sqrt 2-1)^2<1/4$.}
\begin{gather}
\tau=\frac {\sqrt{2}-1}{\sqrt{t-1}},\quad \alpha=\frac {2\tau}{(\sqrt 2+\tau)^2}, \label{eq:tau-alpha}\\
\delta=\|Q^{-1}\|_2^2\,\tilde r+2\|Q^{-1}\|_2\|W\|_2\|\ww\|_2\, \delta_{\cA},
\quad \epsilon_*=\frac{\tau \kappa_2(Q)\delta}{\alpha g\,\omega_{\uniq}}. \label{eq:del}
\end{gather}
If
\begin{equation}\label{eq:pert-tiny-cond}
\delta <\min\left\{ \frac{\alpha g\,\omega_{\uniq}}{\kappa_2(Q)},\; \frac{(1-2\alpha)g\,\omega_{\robu}}{\sqrt{2}}\right\},
\end{equation}
then for ${\scriptstyle\p}\in\{2,{\scriptstyle\F}\}$
\begin{align}
\min_{\substack{D\in\mathbb{D}_{\tau_n}, D^{\T}D=I\\ \Pi\in\mathbb{P}_{\tau_n}}}\frac{\|W - \ww D\Pi\|_{\p}}{\|\ww\|_{\p}}
   &\le \frac {1+\sqrt{t}\,\epsilon_*}
              {\sqrt{1-2\sqrt{t-1}\epsilon_* -(t-1)\epsilon_*^2}} - 1 \label{ineq:main}\\
&=\frac{\tau}{\alpha}\cdot\frac{(\sqrt{t}+\sqrt{t-1})\kappa_2(Q)\delta }{g\,\omega_{\uniq}}+O(\delta^2) \nonumber
:=\varepsilon_{\ub}. \label{ineq:main}
\end{align}
\end{theorem}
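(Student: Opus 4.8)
The plan is to reduce the perturbation problem to a nearby unperturbed one and then analyze how the defining equations of $\na$ are perturbed. First, I would exploit the freedom in choosing $D$ and $\Pi$: since $Q=W^{-1}\ww$, the quantity we must bound is $\min_{D,\Pi}\|W-\ww D\Pi\|/\|\ww\| = \min_{D,\Pi}\|W(I-QD\Pi)\|/\|\ww\|$, so it suffices to show that $Q$ is, up to a block-diagonal orthogonal matrix times a block-diagonal-preserving permutation, close to the identity. The key observation is that if $\wtd W$ were an \emph{exact} block diagonalizer of $\cA$ itself, then $Q\in\mathbb{W}_{\tau_n}$ would satisfy $Q^{-1}$ times the $A_i$-conjugation being block diagonal, which (using $\dim\mathscr{N}(\cA_j)=1$ and Theorem~\ref{thm:uniq}) would force $Q$ into the equivalence class of the identity. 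So the bulk of the work is a quantitative stability version of this rigidity statement.

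The main technical steps, in order, would be: (1) Translate the residual condition \eqref{eq:diag-res} for $\wtd W$ on $\wtd\cA$ into a statement about $Q$ acting on $\cA$: writing $\wtd A_i = A_i+\Delta A_i$ and using $\Bdiag_{\tau_n}(W^{\T}A_iW)$ block structure \eqref{eq:nojbd}, derive that the off-block-diagonal part of $Q^{\T}(W^{\T}A_iW)Q$ is controlled by a combination of $\tilde r$ (through $\wtd R_i$, rescaled by $g$) and $\delta_{\cA}$ (through $\|W\|_2\|\ww\|_2$), which is exactly the quantity $\delta$ in \eqref{eq:del} after accounting for the $\|Q^{-1}\|_2$ factors coming from passing between $\ww$ and $WQ$. (2) Decompose $Q = Q_{\mathrm{bd}} + Q_{\mathrm{off}}$ according to $\tau_n$ and use the near-block-diagonality from step (1) together with the nonsingularity of $G_{jk}$ (i.e. $\omega_{\uniq}>0$) to bound $\|Q_{\mathrm{off}}\|$ linearly in $\delta/(g\,\omega_{\uniq})$ — this is where the pair equations \eqref{zjk}/\eqref{hatmjk} enter: the off-diagonal blocks $Z_{jk}$ of the ``error'' satisfy a perturbed version of $G_{jk}[\,\cdot\,]=$ small, so $\sigma_{\min}(G_{jk})\ge\omega_{\uniq}$ inverts it. (3) For the block-diagonal part $Q_{\mathrm{bd}}=\diag(Q_1,\dots,Q_t)$, use the non-divisibility modulus $\omega_{\robu}$ and equations \eqref{zjj}/\eqref{hatmjj}: each $Q_j$ must be close to a scalar multiple of an orthogonal matrix (by the $\dim\mathscr{N}(\cA_j)=1$ assumption, the only block-diagonal-preserving solutions are near $\mathbb{R}\cdot O(n_j)$), with the deviation bounded via $\sigma_{-2}^{(j)}\ge\omega_{\robu}$; normalizing through $\mathbb{W}_{\tau_n}$ removes the scalar. (4) Combine: choose $D$ to absorb the orthogonal factors in each $Q_j$ and $\Pi$ to fix the block ordering, then assemble $\|I-QD\Pi\|$ from the off-diagonal bound and the within-block bound; the two-sided nature of the estimates and the factor $\tau=(\sqrt2-1)/\sqrt{t-1}$ come from carefully splitting a sum of $t$ (or $t-1$) nonnegative error contributions and applying a fixed-point / quadratic-inequality argument, which is what produces the closed-form $\frac{1+\sqrt t\,\epsilon_*}{\sqrt{1-2\sqrt{t-1}\epsilon_*-(t-1)\epsilon_*^2}}-1$ and its first-order expansion.

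The main obstacle, I expect, is step (2)–(3) done \emph{simultaneously and with the correct constants}: one cannot cleanly separate ``fix the off-diagonal blocks'' from ``fix the diagonal blocks'' because the perturbed $\na$-equations couple them, and the smallness hypothesis \eqref{eq:pert-tiny-cond} is precisely the threshold that makes a Banach-fixed-point / Newton–Kantorovich-type iteration converge. Getting the explicit algebra $\alpha = 2\tau/(\sqrt2+\tau)^2$ and the exact form of $\epsilon_*$ requires tracking how $\kappa_2(Q)$ propagates when one conjugates the residual equations back and forth between the $W$-frame and the $\ww$-frame, and then bounding a self-map on a ball of radius $\sim\epsilon_*$ in the off-block-diagonal space — the constants $\sqrt2-1$ and the $\sqrt{t-1}$ scaling strongly suggest the contraction factor is being optimized over the radius of that ball. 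A secondary difficulty is that $\kappa_2(Q)$ itself must be shown to be close to $1$ (so that using it in the bound is legitimate rather than circular), which presumably is bootstrapped from the same smallness assumption. Everything else — Propositions~\ref{prop:diag-res} and \ref{prop:backerr}, Theorem~\ref{thm:modulus} to make $\omega_{\uniq},\omega_{\robu}$ well-defined independent of $W$, and Theorem~\ref{thm:calA-JBD} to justify the $\dim\mathscr{N}(\cA_j)=1$ reduction — is invoked as given.
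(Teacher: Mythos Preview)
Your high-level plan --- show that $Q=W^{-1}\ww$ lies near the equivalence class $\{D\Pi\}$ --- is the right target, but the route you propose has a genuine gap at steps~(2)--(3). You decompose $Q=Q_{\mathrm{bd}}+Q_{\mathrm{off}}$ and aim to show $Q_{\mathrm{off}}$ is small. This presupposes that the unknown permutation $\Pi$ is trivial: if $Q$ is near some $D\Pi$ with $\Pi\neq I$ (say $\Pi$ swaps two equal-sized blocks), then $Q_{\mathrm{off}}$ is order one and $Q_{\mathrm{bd}}$ is small, not the reverse. There is no a~priori way to linearize the quadratic expression $Q^{\T}\diag(A_i^{(jj)})Q$ around the ``right'' $D\Pi$, because $D\Pi$ is precisely what you are trying to determine. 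Relatedly, the role you assign to $\omega_{\robu}$ --- forcing each diagonal block $Q_j$ near $\mathbb{R}\cdot O(n_j)$ --- is not what $\omega_{\robu}$ does; the hypothesis $\dim\mathscr{N}(\cA_j)=1$ says that solutions of $A_i^{(jj)}Z_{jj}-Z_{jj}^{\T}A_i^{(jj)}=0$ are scalar multiples of $I_{n_j}$, which is a statement about an auxiliary object $Z$, not about $Q_j$.

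The paper resolves the permutation ambiguity by a change of variables that you are missing: it works not with $Q$ but with $Z=Q\Gamma Q^{-1}$. If $Q=D\Pi$ exactly, then $Z=\diag(\gamma_{\pi(1)}I_{n_1},\dots,\gamma_{\pi(t)}I_{n_t})$ is $\tau_n$-block diagonal \emph{for every} $\Pi\in\mathbb{P}_{\tau_n}$; so $\OffBdiag_{\tau_n}(Z)$ is small regardless of which equivalence-class representative $Q$ is near. Moreover the defining equations become \emph{linear} in $Z$: from $\wtd R_i$ one obtains $R_i=\diag(A_i^{(jj)})Z-Z^{\T}\diag(A_i^{(jj)})$ with $(\sum_i\|R_i\|_{\F}^2)^{1/2}\le\delta$, and then $\omega_{\uniq}$ bounds $\|\OffBdiag_{\tau_n}(Z)\|_{\F}$ while $\omega_{\robu}$ bounds the eigenvalue spread of each $Z_{jj}$, pinning $\lambda(Z_{jj})$ inside a single interval $(\gamma_{\pi(j)}-g/2,\gamma_{\pi(j)}+g/2)$ and thereby \emph{identifying} $\pi$. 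An invariant-subspace perturbation theorem (Stewart--Sun) applied to $\Bdiag_{\tau_n}(Z)$ then produces $P\approx I$ with $QP\Pi$ exactly block diagonal; this is where $\tau$ and $\alpha$ arise, not from a contraction on $Q_{\mathrm{off}}$ directly. Finally, your remark that $\kappa_2(Q)$ must be bootstrapped to be near $1$ is a misreading: $\kappa_2(Q)$ is simply a parameter in both the hypothesis \eqref{eq:pert-tiny-cond} and the bound, with no circularity.
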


In what follows, we first look at two illustrating examples, then discuss the implications of Theorem~\ref{thm:main}.
\begin{example}
Let  $A_1=I_2$, $A_2=\diag(1,1+\varsigma)$,
where $\varsigma>0$ is a parameter. It is obvious that $W=I_2$ is a diagonalizer of $\cA=\{A_1,A_2\}$
with respect to $\tau_2=(1,1)$. By calculations, we  get
\[
\omega_{\uniq}=\sqrt{\varsigma^2+2\varsigma+4 - (\varsigma+2)\sqrt{\varsigma^2+4}}=\frac{\varsigma}{\sqrt{2}}+O(\varsigma^{3/2}), \quad
\omega_{\robu}=\infty.
\]
Perturb $\cA$ to $\wtd\cA=\{\wtd A_1,\wtd A_2\}$, where $\wtd A_1=A_1+\epsilon E$ and
$\wtd A_2=A_2-\epsilon E$, with
$E=\begin{bmatrix} \hm 1 & 1\\ -1 & 1\end{bmatrix}$, and $\epsilon\ge 0$ is a parameter
for controlling the level of perturbation.
Consider
$$
c=\cos\theta,\quad s=\sin\theta, \quad
\ww=\begin{bmatrix} \hm c &  s \\
          -s  & c
    \end{bmatrix},
$$
where $\theta\in [-\frac{\pi}{2},\frac{\pi}{2}]$ is a parameter that controls the quality of approximate diagonalizer
$\wtd W$ of $\wtd\cA$.
Simple calculations give
$$
\ww^{\T}\wtd A_1\ww=\begin{bmatrix} 1+\epsilon & \epsilon \\
                                  -\epsilon & 1+ \epsilon
                            \end{bmatrix}, \quad
\ww^{\T} \wtd A_2\ww=\begin{bmatrix} 1+\varsigma s^2-\epsilon & -\epsilon-\varsigma cs \\
                                   \epsilon - \varsigma cs & 1+\varsigma c^2-\epsilon
                            \end{bmatrix}
$$
from which we can see that if $\theta$ and $\epsilon$ are sufficiently small,
$\ww$ is a good block diagonalizer.
Now let $\Gamma=\diag(-1,1)$. We have
$$
g=2,\quad
\kappa_2(Q)=1,\quad
\tilde r=\sqrt{16\epsilon^2+8\varsigma^2c^2s^2},\quad
\delta_{\cA}=2\sqrt{2} \epsilon,\quad
\delta=\tilde r+2\delta_{\cA}.
$$
Thus, if $\theta=\epsilon$ and $\epsilon\ll 1$, then \eqref{eq:pert-tiny-cond} is satisfied.
Thus, by \eqref{ineq:main}, for ${\scriptstyle\p}\in\{2,{\scriptstyle\F}\}$
$$
\min_{D,\Pi}\frac{\|W-\ww D \Pi\|_{\p}}{\|\ww\|_{\p}}=2\sin\frac{\theta}{2}\approx \epsilon,\quad
\varepsilon_{\ub}\approx\frac{(1+5\sqrt{2})(\sqrt{16+8\varsigma^2} +4\sqrt{2})\epsilon}{4\omega_{\uniq}}.
$$
Therefore, as long as $\varsigma$ is not too small, $\omega_{\uniq}$ is not small, and then $\varepsilon_{\ub}=O(\epsilon)$,
i.e., the relative error in $\ww$ and the upper bound $\varepsilon_{\ub}$ have the same order of magnitude.
However, if $\epsilon\ll 1$ and $\varsigma$ is small,
say $\varsigma=\epsilon^{\phi}$ with $0<\phi<1$,
then $\ww$ is always a good block diagonalizer,
independent of $\theta$, in the sense that $\tilde r$ is always small.
But now we have  $\varepsilon_{\ub}=O(\epsilon^{1-\phi})$,
which does not provide a sharp upper bound for the relative error in $\ww$.
\end{example}

\begin{example}
Let $A_1=\diag(I_2, \begin{bmatrix} 1 & 1+\varsigma \\  1 & 1\end{bmatrix})$,
$A_2=\diag(\begin{bmatrix} 1 & 1+\varsigma \\  1 & 1\end{bmatrix}, I_2)$,
where $\varsigma>0$ is a parameter.
Then $W=I_4$ is a $\tau_4$-block diagonalizer of $\cA=\{A_1, A_2\}$, where $\tau_4=(2,2)$.
By calculations, we have
\[
\omega_{\uniq}\approx 0.5858+O(\varsigma),\quad
\omega_{\robu}=\varsigma.
\]
Perturb $\cA$ to $\wtd\cA=\{\wtd A_1,\wtd A_2\}$, where $\wtd A_1=A_1+\epsilon E$,
$\wtd A_2=A_2-\epsilon E$,
where $E$ is a 4-by-4 matrix of all ones and $\epsilon\ge 0$.
Consider
$$
U=\diag\Big(\frac{1}{\sqrt{2}}\begin{bmatrix} \hm 1 & 1\\ -1 & 1\end{bmatrix},
        \frac{1}{\sqrt{2}}\begin{bmatrix} \hm 1 & 1\\ -1 & 1\end{bmatrix}\Big),\quad
\ww=U \diag\Big(1,\begin{bmatrix} \hm c &  s \\
                    -s  & c\end{bmatrix},1\Big),
$$
where $c=\cos\theta$, $s=\sin\theta$, and $\theta\in [-\frac{\pi}{2},\frac{\pi}{2}]$.
Then
$$
\sum_{i=1}^2\big\|\OffBdiag_{\tau_n}(\ww^{\T}\wtd A_i\ww)\big\|_{\F}^2
 =4s^2c^2(2+\varsigma)^2+4\varsigma^2s^2+16(1+s^2)c^2\epsilon^2.
$$
Therefore, if $\theta$ and $\epsilon$ are sufficiently small,
then $\ww$ is a good block diagonalizer.
Now let $\Gamma=\diag(-I_2,I_2)$. By simple calculations, we get
\begin{gather*}
g=2,\quad
\kappa_2(Q)=1,\quad \delta_{\cA}=4\sqrt{2} \epsilon,\quad \delta=\tilde r+2\delta_{\cA},\\
\tilde r=2\sqrt{4s^2c^2(2+\varsigma)^2+4\varsigma^2s^2+16(1+s^2)c^2\epsilon^2}.
\end{gather*}
If $\theta=\epsilon\ll 1$ and $\varsigma$ is not too small, then \eqref{eq:pert-tiny-cond} is satisfied.
Thus, by \eqref{ineq:main}, for ${\scriptstyle\p}\in\{2,{\scriptstyle\F}\}$
$$
\min_{D,\Pi}\frac{\|W-\ww D \Pi\|_{\p}}{\|\ww\|_{\p}}=2\sin\frac{\theta}{2}\approx \epsilon,\quad
\varepsilon_{\ub}\approx \frac{(1+5\sqrt{2})\delta}{4\omega_{\uniq}}=O(\epsilon),
$$
i.e.,  the relative error in $\ww$ and the upper bound $\varepsilon_{\ub}$ have the same order of magnitude.
However, if $\theta=\frac{\pi}{2}-\epsilon$ with $\epsilon\ll1$ and $\varsigma$ is small,
say $\varsigma=\epsilon^{\phi}$ with $\phi>0$,
then the condition \eqref{eq:pert-tiny-cond} of Theorem~\ref{thm:main} is likely violated, and consequently,
Theorem~\ref{thm:main} is no longer applicable.
\end{example}


From these two examples, we can see that the bound $\varepsilon_{\ub}$ in \eqref{ineq:main} is {\em sharp}
in the sense that it can be in the same order of magnitude as the relative error.
But when  $\omega_{\uniq}$ and/or $\omega_{\robu}$ is small, Theorem~\ref{thm:main} may not provide a sharp
bound or even fails to give a bound. This observation is more or less expected. In fact, when
$\omega_{\uniq}$ and/or $\omega_{\robu}$ is small, the \jbdp\ for $\cA$ can be thought of
as an ill-conditioned problem in the sense that any small perturbation can result in huge
error in the solution.

When solving an \ojbdp, diagonalizers $W$, $\ww$ are orthogonal, and thus
$\delta =\tilde r+2\delta_{\cA}$. Theorem~\ref{thm:main} yields

\begin{corollary}\label{cor:main-ojbdp}
In {\em Theorem~\ref{thm:main}}, if  $W$ and $\ww$ are assumed orthogonal, then
\begin{equation}\label{ineq:main-ojbdp}
\min_{\substack{D\in\mathbb{D}_{\tau_n}, D^{\T}D=I\\ \Pi\in\mathbb{P}_{\tau_n}}}\frac{\|W - \ww D\Pi\|_{\p}}{\|\ww\|_{\p}}
   \le  \frac{\tau}{\alpha}\cdot\frac{(\sqrt{t}+\sqrt{t-1})\delta}{g\,\omega_{\uniq}}+O(\delta^2).
\end{equation}
\end{corollary}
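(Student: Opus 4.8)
The plan is to obtain Corollary~\ref{cor:main-ojbdp} as a direct specialization of Theorem~\ref{thm:main}, so the only work is to track how the orthogonality of $W$ and $\ww$ simplifies the quantities entering the bound. First I would check that the hypotheses of Theorem~\ref{thm:main} are met: any orthogonal matrix $W$ satisfies $W^{\T}W=I_n$, hence $\Bdiag_{\tau_n}(W^{\T}W)=I_n$, so orthogonal matrices lie in $\mathbb{W}_{\tau_n}$ and the requirement $W,\ww\in\mathbb{W}_{\tau_n}$ is automatic. The moduli $\omega_{\uniq}$, $\omega_{\robu}$, the constants $\tau,\alpha$, the spread $g$, and the residual measure $\tilde r$ are defined exactly as before and are unaffected by the extra assumption.

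Next I would simplify $\delta$. With $Q=W^{-1}\ww=W^{\T}\ww$, a product of orthogonal matrices is orthogonal, so $\|Q\|_2=\|Q^{-1}\|_2=1$ and $\kappa_2(Q)=1$; likewise $\|W\|_2=\|\ww\|_2=1$. Substituting these into the definition of $\delta$ in \eqref{eq:del} gives
\[
\delta=\|Q^{-1}\|_2^2\,\tilde r+2\|Q^{-1}\|_2\|W\|_2\|\ww\|_2\,\delta_{\cA}=\tilde r+2\delta_{\cA},
\]
which is the identity asserted in the discussion preceding the corollary. The smallness condition \eqref{eq:pert-tiny-cond} that Theorem~\ref{thm:main} requires is inherited as a hypothesis (and in fact loses the factor $\kappa_2(Q)$ in its first branch), so nothing more needs to be verified there.

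Finally I would plug $\kappa_2(Q)=1$ into the first-order form of \eqref{ineq:main}, namely $\varepsilon_{\ub}=\tfrac{\tau}{\alpha}\cdot\tfrac{(\sqrt t+\sqrt{t-1})\kappa_2(Q)\delta}{g\,\omega_{\uniq}}+O(\delta^2)$, which collapses immediately to the right-hand side of \eqref{ineq:main-ojbdp}; the minimization on the left is over the same set $\{D\in\mathbb{D}_{\tau_n},\ D^{\T}D=I,\ \Pi\in\mathbb{P}_{\tau_n}\}$ in both statements, so it is unchanged. (If one wishes, one can also record that the exact bound $\tfrac{1+\sqrt t\,\epsilon_*}{\sqrt{1-2\sqrt{t-1}\epsilon_*-(t-1)\epsilon_*^2}}-1$ of \eqref{ineq:main} continues to hold verbatim, with $\epsilon_*$ recomputed using $\kappa_2(Q)=1$, and Corollary~\ref{cor:main-ojbdp} merely states its leading term.)

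I do not expect a genuine obstacle here: the corollary is a bookkeeping consequence of Theorem~\ref{thm:main}, and whatever difficulty there is lies entirely inside the proof of that theorem. The one point deserving a sentence of care is simply the claim $\kappa_2(Q)=1$, i.e.\ that $Q=W^{-1}\ww$ is orthogonal whenever $W$ and $\ww$ are — which is what makes the $\|W\|_2\|\ww\|_2$ amplification factor in $\delta$ disappear.
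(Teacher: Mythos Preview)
Your proposal is correct and matches the paper's approach exactly: the paper simply notes that when $W$ and $\ww$ are orthogonal one has $\delta=\tilde r+2\delta_{\cA}$ (equivalently $\kappa_2(Q)=\|W\|_2=\|\ww\|_2=1$) and then reads off the bound from Theorem~\ref{thm:main}. Your additional remark that orthogonal matrices automatically lie in $\mathbb{W}_{\tau_n}$ is a sensible sanity check, but otherwise there is nothing to add.
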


Some of the quantities in the right-hand side of \eqref{ineq:main} are not computable,
unless $W$ is known. But it can still be useful in  assessing roughly how good the
approximate bock diagonalizer $\ww$ may be. Suppose that $\tilde r$ is sufficiently tiny. Then it is plausible
to assume $\|Q^{-1}\|_2=O(1)$. The moduli $\omega_{\uniq}$ and $\omega_{\robu}$ which are intrinsic to the
\jbdp\ for $\cA$ may well be estimated by those of
$\what\cA=\big\{\Bdiag_{\tau_n}(\ww^{\T}\wtd A\ww)\big\}_{i=1}^m$. Finally, for $W\in{\mathbb W}_{\tau_n}$
\begin{equation}\label{eq:W-norm}
1\le\|W\|_2\le\sqrt t.
\end{equation}
The same holds for $\ww$, too. We will justify \eqref{eq:W-norm} after Lemma~\ref{lem3} in section~\ref{sec:proof}  in order to use
some of the techniques arising in its proof.

\begin{remark}
Several comments are in order.
\begin{enumerate}
\renewcommand{\labelenumi}{(\alph{enumi})}
\item The quantity $\delta$ in \eqref{eq:del} consists of two parts:
      the first part indicates how good $\ww$ is in approximately block-diagonalizing $\wtd\cA$, and
      the second part indicates how large the perturbation is.
      Therefore, the condition \eqref{eq:pert-tiny-cond} means that the block diagonalizer $\ww$ has to be sufficiently good
      and the perturbation has to be sufficiently small so that $\delta$ does not exceed
      the right-hand side of \eqref{eq:pert-tiny-cond}, which is proportional to the moduli $\omega_{\uniq}$ and $\omega_{\robu}$.
      Although the modulus of non-divisibility $\omega_{\robu}$ does not appear explicitly in the upper bound, it
      limits the size of $\delta$.


\item In \eqref{ineq:main}, $\varepsilon_{\ub}$ is a monotonically increasing function in $\delta$ and $\kappa_2(Q)$.
      If $W$ (or $\ww$) is ill-conditioned,
      then both $\delta$ and $\kappa_2(Q)$ can be large,
      as a result, $\varepsilon_{\ub}$ can be large.


\item  If $\delta \ll 1$, by \eqref{ineq:main}, we have
      \begin{equation}\label{approxmain}
      \min_{D,\Pi}\frac{\|W - \ww D\Pi\|_{\p}}{\|\ww\|_{\p}}
                 \le  \frac{\tau}{\alpha}\cdot\frac{(\sqrt{t}+\sqrt{t-1})\kappa_2(Q)}{\omega_{\uniq}} \cdot\frac{\delta}{g}
                      +O(\delta^2).
      \end{equation}

\item A natural assumption when performing a perturbation analysis
      for \jbdp\ is to assume that both the original matrix set $\cA$ and its perturbed one $\wtd\cA$
      admit exact block diagonalizers, i.e., both \jbdp are solvable.
      Theorem~\ref{thm:main} covers such a scenario as a special case with $\tilde r=0$.


\end{enumerate}
\end{remark}

Theorem~\ref{thm:main}, as a perturbation theorem for \jbdp, can be used to yield
an error bound for an approximate block diagonalizer of block diagonalizable $\cA$ by simply letting
all $\wtd A_i=A_i$, i.e., $\delta_{\cA}=0$. In fact, when $\delta_{\cA}=0$, $\delta=\|Q^{-1}\|_2^2\, \tilde r$. If also $\tilde r\ll 1$, then $\delta \ll 1$ and thus
by  \eqref{approxmain}
\begin{equation}\label{errbound}
\min_{D,\Pi}\frac{\|W - \ww D\Pi\|_{\p}}{\|\ww\|_{\p}}
\le\frac{\tau}{\alpha}\cdot\frac{(\sqrt{t}+\sqrt{t-1})\kappa_2(Q)\|Q^{-1}\|_2^2}{\omega_{\uniq}}
        \cdot\frac{\tilde r}{g}+O({\tilde r}^2).
\end{equation}
This error bound is $O(\frac{\tilde{r}}{\omega_{\uniq}})$,
which is in agreement with the error bound when applied to \jdp\ in \cite[Corollary 3.2]{shi2015some}.

\subsection{Condition Number}
A widely accepted way to define condition number is through some kind of first order
expansion. To explain the idea, we use the explanation in \cite[p.4]{demmel1997applied} for a
real-valued differentiable function $f(x)$ of real variable $x$. Now if $x$ is perturbed to $x+\delta x$, we have,
to the first order,
$$
\frac {|f(x+\delta x)-f(x)|}{|f(x)|}\approx \frac {|f'(x)|\cdot|x|}{|f(x)|}\cdot\frac {|\delta x|}{|x|}.
$$
In words, this says that the relative change to the function value $f(x)$ is about the relative change to the
input $x$ magnified by the factor $|f'(x)|\cdot|x|/|f(x)|$ which defines the (relative) condition number
of $f(x)$ at $x$. A prerequisite for this line of definition is that $f$ is well-defined in some neighborhood of $x$.

In generalizing this framework to more broad content. The above scalar-valued function $f$ is translated into some mapping
that maps inputs which are usually much more general than a single scalar $x$
to some output. In the context of \jbdp, naturally the input is the matrix set $\cA$ and the output is
the block diagonalizer $W$. But then the framework does not work because any generic and arbitrarily small perturbation
to $\cA$ will render one that is not $\tau_n$-block diagonalizable, i.e., the mapping that takes in $\cA$ is not well-defined
in any neighborhood of $\cA$.

We have to seek some other way. Recall the rule of thumb:
$$
\mbox{forward error  $\lesssim$ condition number  $\times$ backward error}.
$$
We will use this as a guideline. Consider $\cA$ and $\wtd\cA$ which is some tiny perturbation away from $\cA$ and suppose
both are $\tau_n$-block diagonalizable with $\tau_n$-block diagonalizer $W$ and $\wtd W$ from ${\mathbb W}_{\tau_n}$, respectively. Apply
Theorem~\ref{thm:main} with $\tilde r=0$ and sufficiently tiny $\delta_{\cA}$ to get,
up to the first order in $\delta_{\cA}$,
\begin{multline*}
\min_{\substack{D\in\mathbb{D}_{\tau_n}, D^{\T}D=I\\ \Pi\in\mathbb{P}_{\tau_n}}}\frac{\|W - \ww D\Pi\|_{\p}}{\|\ww\|_{\p}} \\
  \lesssim  \frac{\tau}{\alpha}\cdot\frac{(\sqrt{t}+\sqrt{t-1})\kappa_2(Q)\|Q^{-1}\|_2\|W\|_2\|\ww\|_2\|\cA\|_{\F}}{g\,\omega_{\uniq}}
            \cdot\frac { \delta_{\cA}}{\|\cA\|_{\F}}.
\end{multline*}
Thinking about as $\wtd\cA$ goes to $\cA$,  we may let $\wtd W$ go to $W$ and the right-hand side approaches to
$$
 \frac{\tau}{\alpha}\cdot\frac{(\sqrt{t}+\sqrt{t-1})\|W\|_2^2\|\cA\|_{\F}}{g\,\omega_{\uniq}}\cdot\frac { \delta_{\cA}}{\|\cA\|_{\F}}.
$$
which suggests that
we may define the {\em $\tau_n$-condition number\/} of \jbdp\  for $\cA$  as
\begin{equation}\label{condnum}
\cond(\cA)= \frac{\tau}{\alpha}\cdot\frac{(\sqrt{t}+\sqrt{t-1})\|W\|_2^2 \|\cA\|_{\F}}{\omega_{\uniq}},
\end{equation}
where the notational dependency on $\tau_n$ is suppressed for convenience.
A few remarks are in order for this condition number $\cond(\cA)$.
\begin{enumerate}
\renewcommand{\labelenumi}{(\alph{enumi})}
\item As it appears, the right-hand side of \eqref{condnum} depends on the $\tau_n$-block
      diagonalizer $W\in{\mathbb W}_{\tau_n}$. But it isn't. This is because
      $\omega_{\uniq}$ is  independent of the choice of the block diagonalizer $W\in\mathbb{W}_{\tau_n}$ (Theorem~\ref{thm:modulus})
      and so is $\|W\|_2$ (Lemma~\ref{lm:2Ws} below).


\item Given $\beta\ne 0$, let $\beta \cA=\{\beta A_i\}_{i=1}^m$.
      It can be seen that $\cond(\cA)=\cond(\beta\cA)$,
      i.e., the condition number $\cond(\cA)$ is scalar-scaling invariant.

\item Suppose $\|A_i\|_{\F}=1$ for $i=1,2,\ldots,m$ and
      consider the condition number $\cond(\what\cA)$ of the \jbdp\  for $\what\cA=\{\beta_i A_i\}_{i=1}^m$,
      where $\beta_j$ are positive real numbers.
      Recall the definition of $G_{jk}$ in \eqref{hatmjk}
      and the definition of $\omega_{\uniq}$. $W$, as a $\tau_n$-block diagonalizer of $\cA$,
      is also one of $\what\cA$.
      Now define $\what G_{jk}$ for $\what\cA$, similarly to $G_{jk}$ for $\cA$.
      We have
      \begin{equation}
      \what G_{jk}=\big[\diag(\beta_1,\dots,\beta_m)\otimes I_{2 n_j n_k}\big]G_{jk}.
      \end{equation}
      Let $\beta_{\max}=\max_{1\le j\le t} \beta_j$ and $\beta_{\min}=\min_{1\le j\le t}\beta_j$.
      We have $\sigma_{\min}(\what G_{jk})\ge \beta_{\min} \sigma_{\min}(G_{jk})$.
      Thus, $\hat{\omega}_{\uniq}:=\omega_{\uniq}(\what\cA)\ge \beta_{\min} \omega_{\uniq}$.
      Therefore
      \begin{equation}\label{scalea}
      \cond(\what\cA)
         =\frac{\tau}{\alpha}\cdot\frac{(\sqrt{t}+\sqrt{t-1})\|W\|_2^2\left(\sum_{i=1}^m \|\beta_iA_i\|_{\F}^2\right)^{1/2}}
                                       {\hat{\omega}_{\uniq}}
         \le \frac{\beta_{\max}}{\beta_{\min}}\cond(\cA).
      \end{equation}
      As an upper bound of $\cond(\what\cA)$,
      the right hand side of \eqref{scalea} is minimized if all $\beta_j$ are equal.
      This tells us that when solving \jbdp,
      it would be a good idea to first normalize all $A_i$ to have $\|A_i\|_{\F}=1$.

\item It is easy to see that
      the modulus of uniqueness $\omega_{\uniq}$ is an monotonic increasing function
      of the number of matrices in $\cA$. How it affects
      the condition number $\cond(\cA)$ is in general unclear.
      In our numerical tests in section~\ref{sec:numer},
      as we put more matrices into the matrix set $\cA$,
      the condition number $\cond(\cA)$ first decreases then remains almost unchanged.

\item Compared with the condition number $\mbox{cond}_{\lambda}$ introduced in \cite{shi2015some} for \jdp\  only,
      our condition number here is about the square root of $\mbox{cond}_{\lambda}$ there, and thus more realistic.
\end{enumerate}

\begin{lemma}\label{lm:2Ws}
For any two  $W,\,\wtd W\in\mathbb{W}_{\tau_n}$, if $\wtd W=WD\Pi$
for some $D\in\mathbb{D}_{\tau_n}$ and $\Pi\in\mathbb{P}_{\tau_n}$, then
$D$ is orthogonal and, as a result, $\|\wtd W\|_2=\|W\|_2$.
\end{lemma}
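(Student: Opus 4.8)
The plan is to push the factorization $\wtd W=WD\Pi$ through the two constraints that define membership in $\mathbb{W}_{\tau_n}$, namely $\Bdiag_{\tau_n}(W^{\T}W)=I_n$ and $\Bdiag_{\tau_n}(\wtd W^{\T}\wtd W)=I_n$. Write $D=\diag(D_1,\dots,D_t)$ with each $D_j\in\mathbb{R}^{n_j\times n_j}$ nonsingular (nonsingularity of $D$ being forced by that of $\wtd W$, $W$, $\Pi$), and compute $\wtd W^{\T}\wtd W=\Pi^{\T}D^{\T}(W^{\T}W)D\,\Pi$. First I would record the elementary fact that, for any $M\in\mathbb{R}^{n\times n}$ and any $\Pi\in\mathbb{P}_{\tau_n}$, the $\tau_n$-block diagonal part of $\Pi^{\T}M\Pi$ depends only on the $\tau_n$-block diagonal part of $M$: invoking the block description of $\Pi\in\mathbb{P}_{\tau_n}$ from the paragraph following the definition of $\mathbb{P}_{\tau_n}$ (in block column $j$ the only nonzero subblock sits at block row $\pi(j)$ and is an $n_j\times n_j$ permutation matrix, call it $P_j$), a direct block multiplication gives $(\Pi^{\T}M\Pi)_{jj}=P_j^{\T}M_{\pi(j)\pi(j)}P_j$.

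Next I would apply this with $M=D^{\T}(W^{\T}W)D$. Since $D$ is block diagonal, $M_{\pi(j)\pi(j)}=D_{\pi(j)}^{\T}(W^{\T}W)_{\pi(j)\pi(j)}D_{\pi(j)}=D_{\pi(j)}^{\T}D_{\pi(j)}$, where the last step uses $\Bdiag_{\tau_n}(W^{\T}W)=I_n$. Hence the $(j,j)$ block of $\wtd W^{\T}\wtd W$ equals $P_j^{\T}D_{\pi(j)}^{\T}D_{\pi(j)}P_j$, and the constraint $\Bdiag_{\tau_n}(\wtd W^{\T}\wtd W)=I_n$ forces $P_j^{\T}D_{\pi(j)}^{\T}D_{\pi(j)}P_j=I_{n_j}$ for every $j$. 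Because each $P_j$ is a permutation matrix, hence orthogonal, this simplifies to $D_{\pi(j)}^{\T}D_{\pi(j)}=I_{n_j}$; and as $j$ ranges over $\{1,\dots,t\}$, so does $\pi(j)$, so every $D_k$ is orthogonal and therefore $D=\diag(D_1,\dots,D_t)$ is orthogonal.

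Finally, with $D$ and $\Pi$ both orthogonal, the spectral norm is unchanged by the right multiplications in $\wtd W=WD\Pi$, so $\|\wtd W\|_2=\|W\|_2$, completing the proof. I do not expect a genuine obstacle here: the argument is short, and the only point needing care is the bookkeeping for how conjugation by $\Pi\in\mathbb{P}_{\tau_n}$ permutes (and internally permutes) the diagonal blocks, which is precisely what the discussion immediately after the definition of $\mathbb{P}_{\tau_n}$ supplies and can be quoted verbatim.
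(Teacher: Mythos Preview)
Your proposal is correct and follows essentially the same route as the paper: compute $\wtd W^{\T}\wtd W=\Pi^{\T}D^{\T}(W^{\T}W)D\Pi$, read off its $\tau_n$-block diagonal part using $\Bdiag_{\tau_n}(W^{\T}W)=I_n$, and conclude $D_j^{\T}D_j=I_{n_j}$ for each $j$. You are slightly more explicit than the paper about how conjugation by $\Pi\in\mathbb{P}_{\tau_n}$ shuffles and internally permutes the diagonal blocks, but the underlying argument is identical.
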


\begin{proof}
Since $D\in\mathbb{D}_{\tau_n}$, $D=\diag(D_1,D_2,\ldots,D_t)$ with $D_j\in{\mathbb R}^{n_j\times n_j}$.
It suffices to show each $D_j$ is orthogonal.
Write $W=[W_1,W_2,\ldots,W_t]$ and $\wtd W=[\wtd W_1,\wtd W_2,\ldots,\wtd W_t]$, where
$W_j,\,\wtd W_j\in{\mathbb R}^{n\times n_j}$.
Because $W,\,\wtd W\in\mathbb{W}_{\tau_n}$ by assumption, we have
$$
 \Bdiag_{\tau_n}( W^{\T} W)=\Bdiag_{\tau_n}(\wtd W^{\T}\wtd W)=I_n.
$$
Because $\Pi\in\mathbb{P}_{\tau_n}$, the diagonal blocks $\{I_{n_j}\}_{j=1}^t$ of $\Bdiag_{\tau_n}(\Pi^{\T}D^{\T} W^{\T} WD\Pi)$
are the same as those of $\Bdiag_{\tau_n}(D^{\T} W^{\T} WD)$ after some permutation. Therefore,
$$
I_{n_j}=D_j^{\T}W_j^{\T}W_jD_j=D_j^{\T}D_j,
$$
i.e., $D_j$ is orthogonal for all $j$, as expected.
\end{proof}

Thus, if  \jbdp\   is uniquely $\tau_n$-block diagonalizable,
then all $\tau_n$-block diagonalizers in $\mathbb{W}_{\tau_n}$ can be written in the form $WD\Pi$,
where $W\in\mathbb{W}_{\tau_n}$ is a particular $\tau_n$-block diagonalizer,
$D\in\mathbb{D}_{\tau_n}$ is orthogonal and $\Pi\in\mathbb{P}_{\tau_n}$.

\section{Proof of Theorem~\ref{thm:main}}\label{sec:proof}
Recall the assumptions: $\cA=\{A_i\}_{i=1}^m$ is $\tau_n$-block diagonalizable and
$W\in\mathbb{W}_{\tau_n}$ is a $\tau_n$-block diagonalizer such that \eqref{eq:nojbd} holds.
The modulus of uniqueness $\omega_{\uniq}$ and the modulus of non-divisibility $\omega_{\robu}$
for the block diagonalization of $\cA$ by $W$ are defined by Definition~\ref{def:sig}.
The perturbed matrix set is $\wtd\cA=\{\wtd A_i\}_{i=1}^m$ and $\wtd W$ is an approximate
$\tau_n$-block diagonalizer of $\wtd\cA$. $\Gamma=\diag(\gamma_1 I_{n_1},\dots, \gamma_t I_{n_t})$,
where $\gamma_1,\dots,\gamma_t$ are distinct real numbers
with all $|\gamma_j|\le 1$, and $\wtd R_i$ are defined by \eqref{eq:diag-res}.

\subsection{Three Lemmas}\label{subsec:lemma}
The three lemmas in this subsection may have interest of their own, although their roles here are to
assist the proof of Theorem~\ref{thm:main}.

\begin{lemma}\label{lem1}
For given $Z\in{\mathbb R}^{n\times n}$, denote by
\begin{equation}\label{eq:Ri}
R_i=\diag(A_i^{(11)},\dots, A_i^{(tt)}) Z - Z^{\T} \diag(A_i^{(11)},\dots,A_i^{(tt)})
\end{equation}
for $1\le i\le m$.
Partition $Z=\big[Z_{jk}\big]$ with $Z_{jk}\in\mathbb{R}^{n_j\times n_k}$ and let $\lambda(Z_{jj})=\{\mu_{jk}\}_{k=1}^{n_j}$.
\begin{enumerate}
\renewcommand{\labelenumi}{(\alph{enumi})}
  \item If $\omega_{\uniq}>0$,
        then
        \begin{equation}\label{offz}
        \|\OffBdiag_{\tau_n}(Z)\|_{\F}^2  \le \frac{\sum_{i=1}^m\|\OffBdiag_{\tau_n}(R_i)\|_{\F}^2}{\omega_{\uniq}^2}.
        \end{equation}

  \item If $\dim\mathscr{N}(\cA_j)=1$,
        then there exists a real number $\hat{\mu}_j$ such that
        \begin{equation}\label{disteig}
        \sum_{k=1}^{n_j}|\mu_{jk} -\hat{\mu}_j|^2\le
        \frac{\sum_{i=1}^m\|\Bdiag_{\tau_n}(R_i)\|_{\F}^2}{\omega_{\robu}^2} .
        \end{equation}
\end{enumerate}

\end{lemma}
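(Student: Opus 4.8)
The plan is to translate the defining relation \eqref{eq:Ri} into the block‑structured equations already worked out in Section~\ref{sec:uniq}, namely \eqref{azzajk}, and then to measure how far $Z$ is from $\mathscr{N}(\cA)$ in terms of the residuals $R_i$. Concretely, writing $R_i=\big[R_i^{(jk)}\big]$ with $R_i^{(jk)}\in\mathbb{R}^{n_j\times n_k}$, the relation \eqref{eq:Ri} reads blockwise as $A_i^{(jj)}Z_{jk}-Z_{kj}^{\T}A_i^{(kk)}=R_i^{(jk)}$, and the diagonal blocks $R_i^{(jj)}$ decouple from the off‑diagonal ones exactly as in \eqref{eq:decouple-azzajk}. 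This separation is what makes it legitimate to bound $\|\OffBdiag_{\tau_n}(Z)\|_{\F}$ using only the $\OffBdiag$ parts of the $R_i$ and the eigenvalue spread of $Z_{jj}$ using only the $\Bdiag$ parts.

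For part (a): for each pair $1\le j<k\le t$, stack the two equations $A_i^{(jj)}Z_{jk}-Z_{kj}^{\T}A_i^{(kk)}=R_i^{(jk)}$ and $A_i^{(kk)}Z_{kj}-Z_{jk}^{\T}A_i^{(jj)}=R_i^{(kj)}$ over $i=1,\dots,m$, and vectorize. Using the identities $\myvec(PXQ)=(Q^{\T}\otimes P)\myvec(X)$ and $\myvec(X^{\T})=\Pi\myvec(X)$ (perfect shuffle), the left‑hand side becomes $G_{jk}\big[\,\myvec(Z_{jk});\,-\myvec(Z_{kj}^{\T})\,\big]$ with exactly the $G_{jk}$ of \eqref{hatmjk}, and the right‑hand side is the stacked $\myvec(R_i^{(jk)}),\myvec(R_i^{(kj)})$. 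Since $\sigma_{\min}(G_{jk})\ge\omega_{\uniq}>0$, $G_{jk}$ has full column rank, so $\big\|\,\myvec(Z_{jk});\myvec(Z_{kj}^{\T})\,\big\|_2\le \omega_{\uniq}^{-1}\big\|\text{stacked }R\text{ blocks}\big\|_2$. Squaring, summing over all $j<k$, and noting that $\sum_{j<k}\big(\|Z_{jk}\|_{\F}^2+\|Z_{kj}\|_{\F}^2\big)=\|\OffBdiag_{\tau_n}(Z)\|_{\F}^2$ and likewise for the $R_i$‑side, yields \eqref{offz}. One minor care point is matching the summation: the right‑hand side of \eqref{offz} sums $\|\OffBdiag_{\tau_n}(R_i)\|_{\F}^2$ over $i$, which is precisely $\sum_{i}\sum_{j\neq k}\|R_i^{(jk)}\|_{\F}^2$, so the bookkeeping closes.

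For part (b): fix $j$ with the decoupled diagonal equation $A_i^{(jj)}Z_{jj}-Z_{jj}^{\T}A_i^{(jj)}=R_i^{(jj)}$. Vectorizing as in \eqref{eq:Zjj} gives $G_{jj}\myvec(Z_{jj})=\myvec(R_i^{(jj)})$ stacked over $i$. By Theorem~\ref{thm:calA-JBD}(a), $\myvec(I_{n_j})\in\mathscr{N}(G_{jj})$; the hypothesis $\dim\mathscr{N}(\cA_j)=1$ means this is the entire kernel, so on the orthogonal complement of $\myvec(I_{n_j})$ the operator $G_{jj}$ is bounded below by its smallest nonzero singular value, which is $\omega_{\robu}$. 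Decompose $\myvec(Z_{jj})=\hat\mu_j\,\myvec(I_{n_j})/\|\myvec(I_{n_j})\|^2\cdot\|\myvec(I_{n_j})\|^2+\cdots$ — more cleanly, let $\hat\mu_j=\trace(Z_{jj})/n_j$ so that $Z_{jj}-\hat\mu_j I_{n_j}\perp I_{n_j}$ in the Frobenius inner product; then $\|Z_{jj}-\hat\mu_j I_{n_j}\|_{\F}\le\omega_{\robu}^{-1}\big(\sum_i\|R_i^{(jj)}\|_{\F}^2\big)^{1/2}$. The remaining, and genuinely the only nontrivial, step is to pass from $\|Z_{jj}-\hat\mu_j I_{n_j}\|_{\F}$ to $\sum_k|\mu_{jk}-\hat\mu_j|^2$: I would invoke the standard fact that the eigenvalues of a matrix, as a multiset, lie within Frobenius distance $\|Z_{jj}-\hat\mu_j I\|_{\F}$ of the constant multiset $\{\hat\mu_j,\dots,\hat\mu_j\}$ (for example via Schur's triangularization, since $\sum_k|\mu_{jk}-\hat\mu_j|^2\le\|Z_{jj}-\hat\mu_j I_{n_j}\|_{\F}^2$ — the sum of squared moduli of diagonal entries of the triangular form is at most the full Frobenius norm). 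Finally one sums over those $j$ with $n_j>1$, using $\omega_{\robu}=\min_{n_j>1}(\text{smallest nonzero }\sigma(G_{jj}))$ as the uniform lower bound, while the $n_j=1$ blocks contribute nothing since there $Z_{jj}$ is already a scalar equal to its own eigenvalue (take $\hat\mu_j=Z_{jj}$). The main obstacle is getting the eigenvalue‑perturbation inequality stated cleanly and making sure the choice of $\hat\mu_j$ is simultaneously the right projection for the $G_{jj}$ bound and admissible for the eigenvalue comparison; everything else is vectorization bookkeeping.
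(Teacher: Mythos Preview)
Your proposal is correct and follows essentially the same approach as the paper's proof: blockwise vectorization to reduce part (a) to the singular value lower bound for the stacked $G_{jk}$, and for part (b) the decomposition $Z_{jj}=\hat\mu_j I_{n_j}+\hat Z_{jj}$ with $\hat Z_{jj}\perp I_{n_j}$ followed by Schur's inequality $\sum_k|\lambda_k(\hat Z_{jj})|^2\le\|\hat Z_{jj}\|_{\F}^2$. The paper phrases the orthogonal decomposition via the Moore--Penrose inverse, writing $\myvec(Z_{jj})=G_{jj}^\dagger r_{jj}+\hat\mu_j\myvec(I_{n_j})$, but this yields exactly your $\hat\mu_j=\trace(Z_{jj})/n_j$; your worry about the choice of $\hat\mu_j$ serving double duty is unfounded since the Schur bound holds for \emph{any} scalar shift, and note that \eqref{disteig} is asserted for each fixed $j$ separately, so no summation over $j$ is needed.
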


\begin{proof}
Partition $R_i=\big[R_i^{(jk)}\big]$ conformally with respect to $\tau_n$.
First, we show \eqref{offz}.
For any pair $(j,k)$ with $j<k$, it follows from \eqref{eq:Ri} that
\begin{align*}
G_{jk}\begin{bmatrix} \hm \myvec (Z_{jk})\\ -\myvec (Z_{kj}^{\T})\end{bmatrix}
     =\begin{bmatrix} \myvec (R_1^{(jk)})\\ -\myvec ((R_1^{(kj)})^{\T})\\
             \vdots\\
             \myvec (R_m^{(jk)})\\ -\myvec ((R_m^{(kj)})^{\T})
             \end{bmatrix}=:r_{jk},
\end{align*}
where $G_{jk}$ is defined by \eqref{hatmjk}. Put them all together to get
\[
M_{\uniq} z_{\uniq} =r_{\uniq},
\]
where
\begin{align*}
M_{\uniq}&=\diag\big(G_{12},\dots,G_{1t},G_{23},\dots,G_{2t}, \dots, G_{t-1,t}\big),\\
z_{\uniq}&=\big[\myvec (Z_{12})^{\T}, \, -\myvec (Z_{21}^{\T})^{\T},
      \dots,
      \myvec (Z_{1t})^{\T}, -\myvec (Z_{t1}^{\T})^{\T}, \\
     &\qquad \myvec (Z_{23})^{\T}, -\myvec (Z_{32}^{\T})^{\T},\dots,
     \myvec (Z_{2t})^{\T}, -\myvec (Z_{t2}^{\T})^{\T},
      \dots, \\
      &\qquad \myvec(Z_{t-1,t})^{\T}, \myvec(Z_{t,t-1}^{\T})^{\T}\big]^{\T},\\
r_{\uniq}&=\big[
        r_{12}^{\T},  \dots,  r_{1t}^{\T},  r_{23}^{\T},  \dots,  r_{2t}^{\T}, \dots,  r_{t-1,t}^{\T}
     \big]^{\T}.
\end{align*}
We have $\sigma_{\min}(M_{\uniq})=\min_{j<k}\sigma_{\min}(G_{jk})=\omega_{\uniq}>0$, and thus
\begin{align*}
\|\OffBdiag_{\tau_n}(Z)\|_{\F}^2=\|z_{\uniq}\|_2^2\le \frac{\|r_{\uniq}\|_2^2}{\omega_{\uniq}^2}
=\frac{\sum_{i=1}^m\|\OffBdiag_{\tau_n}(R_i)\|_{\F}^2}{\omega_{\uniq}^2},
\end{align*}
as expected.
Next, we show \eqref{disteig}.
For $j=k$, using \eqref{eq:Ri}, we have
\begin{align*}
G_{jj} \myvec(Z_{jj})=\begin{bmatrix} \myvec(R_1^{(jj)})\\ \vdots \\
                                       \myvec(R_m^{(jj)})
                                \end{bmatrix}=:r_{jj},
\end{align*}
where $G_{jj}$ is defined by \eqref{hatmjj}.
Since $\dim\mathscr{N}(\cA_j)=1$ by assumption, we know that the null space of $G_{jj}$
is spanned by $\myvec(I_{n_j})$, and thus
there exists a real number $\hat{\mu}_j$ such that
\[
\myvec(Z_{jj})=G_{jj}^{\dagger} r_{jj} + \hat{\mu}_j \myvec(I_{n_{j}}),
\]
where $G_{jj}^{\dagger}$ is the Moore-Penrose inverse \cite[p.102]{stewart1990matrix} of $G_{jj}$.
It follows immediately that
\begin{align*}
Z_{jj}=\what Z_{jj}+\hat{\mu}_j I_{n_j},
\end{align*}
where
$\what Z_{jj}=\reshape(G_{jj}^{\dagger} r_{jj},n_j,n_j)$. In particular,
$\lambda(\what Z_{jj})=\{\mu_{jk}-\hat{\mu}_j\}_{k=1}^{n_j}$ and hence
\begin{align*}
\sum_{k=1}^{n_j}|\mu_{jk} - \hat{\mu}_j|^2\le \|\what Z_{jj}\|_{\F}^2\le
\frac{\|r_{jj}\|_2^2}{\omega_{\robu}^2}
\le \frac{\sum_{i=1}^m\|R_i^{(jj)}\|_{\F}^2}{\omega_{\robu}^2}
\le \frac{\sum_{i=1}^m\|\Bdiag_{\tau_n}(R_i)\|_{\F}^2}{\omega_{\robu}^2}.
\end{align*}
This completes the proof.
\end{proof}

Previously in Theorem~\ref{thm:main}, $Q$ is set to $W^{-1}\ww$, but the one in the next lemma can be any given
nonsingular matrix.

\begin{lemma}\label{lem2}
For any given nonsingular $Q\in{\mathbb R}^{n\times n}$,
let $Z=Q\Gamma Q^{-1}$ and write $Z= B - E$ with
$B=\Bdiag_{\tau_n}(Z)$ and $E=-\OffBdiag_{\tau_n}(Z)$.
Let $\tau$ and $\alpha$ be as in \eqref{eq:tau-alpha} and $g$ as in \eqref{eq:res-err}.
If
\begin{align}\label{econ}
g>\|Q^{-1}EQ\|_{\F}/\alpha,
\end{align}
then there exists a $\tau_n$-block diagonal matrix
$\wtd B=\diag(\wtd B_{11},\dots, \wtd B_{tt})$
and a nonsingular matrix $P=\big[P_{jk}\big]$ with $P_{jk}\in\mathbb{R}^{n_j\times n_k}$ and
 $P_{jj}=I_{n_j}$ such that
\begin{align}
B (QP) =(QP)\wtd B,
\end{align}
and for $j=1,2,\ldots,t$
\begin{subequations}\label{plam}
\begin{align}
&\|\widehat{P}_j\|_{\F}\le \frac {\tau}{\alpha}\cdot\frac {\|Q^{-1}EQ\|_{\F}}g,\label{plam1}\\  
&\sum_{k=1}^{n_j}|\tilde\mu_{jk}-\gamma_j|^2<
(1+\tau^2) \cdot \|Q^{-1}EQ\|_{\F}^2, \label{plam2} 
\end{align}
where $\tilde\mu_{j1},\dots, \tilde\mu_{jn_j}$ are the eigenvalues of $\wtd B_{jj}$, and
\begin{align}\label{hatpj}
\widehat{P}_j=\begin{bmatrix} P_{1j}^{\T}, & \dots, & P_{j-1,j}^{\T}, & 0_{n_j\times n_j}, &
                     P_{j+1,j}^{\T}, & \dots, & P_{tj}^{\T}
              \end{bmatrix}^{\T}.
\end{align}
\end{subequations}
\end{lemma}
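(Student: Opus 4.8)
The plan is to recognize the statement as an invariant‑subspace (block–diagonalization) perturbation result for $\Gamma+F$, where $F:=Q^{-1}EQ$, so that $\|F\|_{\F}=\|Q^{-1}EQ\|_{\F}$ is exactly the quantity appearing in \eqref{econ}. Since $Z=Q\Gamma Q^{-1}$ and $B=Z+E$, conjugating by $Q$ gives $Q^{-1}BQ=\Gamma+F$, so the desired identity $B(QP)=(QP)\wtd B$ is equivalent to $(\Gamma+F)P=P\wtd B$, i.e.\ $\wtd B=P^{-1}(\Gamma+F)P$ is $\tau_n$-block diagonal. Thus it suffices to produce $P=I+X$ with vanishing diagonal blocks $X_{jj}=0$ (equivalently $P_{jj}=I_{n_j}$) that block-diagonalizes the perturbation $\Gamma+F$ of the block-scalar matrix $\Gamma$, and then to bound $X$ and the eigenvalues of the blocks $\wtd B_{jj}$.

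First I would observe that, because $\wtd B$ is required block diagonal, the equation $(\Gamma+F)P=P\wtd B$ decouples over the block columns of $P$: for each $j$ it reads $(\Gamma+F)P_{:,j}=P_{:,j}\wtd B_{jj}$, and this involves only the $j$-th block column $P_{:,j}$ (diagonal block $I_{n_j}$, off-diagonal blocks $\widehat{P}_j$ as in \eqref{hatpj}) and the unknown $\wtd B_{jj}$. Reading off its $j$-th block row gives $\wtd B_{jj}=\gamma_j I_{n_j}+F_{jj}+\sum_{l\ne j}F_{jl}P_{lj}$, and the remaining block rows give, for $k\ne j$, $(\gamma_j-\gamma_k)P_{kj}=-F_{kj}-F_{kk}P_{kj}-\sum_{l\ne j,k}F_{kl}P_{lj}+P_{kj}F_{jj}+P_{kj}\sum_{l\ne j}F_{jl}P_{lj}$, an algebraic Riccati system for $\widehat{P}_j$. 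Because $|\gamma_j-\gamma_k|\ge g$ for all $k\ne j$, each $P_{kj}$ is recovered by dividing by a scalar of modulus $\ge g$, so the system is a fixed point $\widehat{P}_j=\Phi_j(\widehat{P}_j)$ in which the linear part inverts with norm $\le 1/g$ and the remaining part of $\Phi_j$, apart from the forcing term $-F_{kj}$, is controlled by $\|F\|_{\F}$ times $\|\widehat{P}_j\|_{\F}$ plus a $\|\widehat{P}_j\|_{\F}^2$ term.

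Then I would solve each Riccati system by the contraction mapping principle: under \eqref{econ} (which says $\|F\|_{\F}/g<\alpha$), $\Phi_j$ maps the closed Frobenius ball of radius $r_j:=\frac{\tau}{\alpha}\cdot\frac{\|F\|_{\F}}{g}$ into itself and is a contraction there, hence has a (unique) fixed point $\widehat{P}_j$ with $\|\widehat{P}_j\|_{\F}\le r_j$, which is \eqref{plam1}. Carrying this out amounts to bounding the bilinear and higher terms of $\Phi_j$ in Frobenius norm (via $\|AB\|_{\F}\le\|A\|_2\|B\|_{\F}$ and Cauchy--Schwarz across the blocks, using $\|F_{jj}\|_{\F}^2+\sum_{l\ne j}\|F_{jl}\|_{\F}^2\le\|F\|_{\F}^2$ and companion block inequalities), which reduces the self-map requirement to a scalar quadratic inequality in $r_j$; the specific constants $\tau=(\sqrt2-1)/\sqrt{t-1}$ and $\alpha=2\tau/(\sqrt2+\tau)^2$ are precisely those making this inequality and the contraction estimate hold. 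I expect this step --- extracting the sharp constants from the scalar recursion, including the $t$-dependence that keeps the aggregated transformation $X$ controlled --- to be the main technical obstacle; an alternative route is to peel off the $t$ blocks of $\Gamma+F$ one at a time and compose the elementary two-block transformations, tracking the degradation of the spectral gap at each step.

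Finally I would collect the remaining conclusions. Nonsingularity of $P=I+X$ follows since $\|X\|_{\F}^2=\sum_j\|\widehat{P}_j\|_{\F}^2\le t\,r_j^2<t\,\tau^2=(\sqrt2-1)^2\,t/(t-1)\le(2-\sqrt2)^2<1$, so $\|X\|_2<1$. For \eqref{plam2}, from $\wtd B_{jj}-\gamma_j I_{n_j}=F_{jj}+\sum_{l\ne j}F_{jl}P_{lj}$ and Cauchy--Schwarz one gets $\|\wtd B_{jj}-\gamma_j I_{n_j}\|_{\F}\le\bigl(\|F_{jj}\|_{\F}^2+\sum_{l\ne j}\|F_{jl}\|_{\F}^2\bigr)^{1/2}\bigl(1+\|\widehat{P}_j\|_{\F}^2\bigr)^{1/2}\le\|F\|_{\F}\sqrt{1+\|\widehat{P}_j\|_{\F}^2}<\|F\|_{\F}\sqrt{1+\tau^2}$ (using $\|\widehat{P}_j\|_{\F}<\tau$, which holds because $r_j<\tau$ by \eqref{econ}); then Schur's inequality $\sum_k|\tilde\mu_{jk}-\gamma_j|^2\le\|\wtd B_{jj}-\gamma_j I_{n_j}\|_{\F}^2$ yields \eqref{plam2}.
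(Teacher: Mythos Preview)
Your proposal is correct and follows essentially the same route as the paper: both recognize that $Q^{-1}BQ=\Gamma+F$ with $F=Q^{-1}EQ$, and that the columns of $P$ decouple into independent invariant-subspace problems for each block index $j$, reducing to a quadratic (Riccati) equation in $\widehat P_j$ whose solution is controlled by $\|F\|_{\F}/g$. The two presentations differ only in packaging. The paper does not carry out the contraction-mapping argument from scratch; instead it partitions $\Gamma+F$ into a $2\times 2$ block form (block $j$ versus its complement) and invokes Stewart--Sun's invariant subspace perturbation theorem (Theorem~2.8 in \cite{stewart1990matrix}) directly, which already supplies the bound $\|\widehat P_j\|_{\F}\le 2\|E_{21}\|_{\F}/\bigl(\tilde g+\sqrt{\tilde g^2-4\|E_{21}\|_{\F}\|E_{12}\|_{\F}}\bigr)$; the constants $\tau,\alpha$ then drop out of the algebra after using $\|E_{11}\|_{\F}+\|E_{22}\|_{\F}\le\sqrt{2}\,\|F\|_{\F}$. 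Your plan to redo the fixed-point argument by hand is equivalent but more laborious; citing the Stewart--Sun theorem is what makes the ``sharp constants'' step, which you correctly flag as the main obstacle, routine. For the nonsingularity of $P$, the paper gives a direct contradiction argument bounding $\|x_j\|_2$ in terms of $\sum_{k\ne j}\|P_{jk}\|_2^2\|x_k\|_2^2$, rather than your (also valid and somewhat cleaner) observation that $\|P-I\|_{\F}^2\le t\tau^2<1$. The eigenvalue bound \eqref{plam2} is obtained identically in both.
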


\begin{proof}
It suffices to show there exist $\widehat{P}_1\in\mathbb{R}^{n\times n_1}$ and
$\wtd B_{11}\in\mathbb{R}^{n_1\times n_1}$
such that
\begin{align}\label{dqp1}
Q^{-1}BQ\begin{bmatrix} I_{n_1}\\ \widehat{P}_1 \end{bmatrix}
  \equiv (\Gamma+Q^{-1}EQ)\begin{bmatrix} I_{n_1}\\ \widehat{P}_1 \end{bmatrix} =\begin{bmatrix} I_{n_1}\\ \widehat{P}_1 \end{bmatrix} \wtd B_{11},
\end{align}
\eqref{plam} for $j=1$ holds,  and $P$ is nonsingular.

Partition $Q^{-1}EQ=\begin{bmatrix} E_{11} & E_{12} \\ E_{21} & E_{22}\end{bmatrix}$ with $E_{11}\in\mathbb{R}^{n_1\times n_1}$, $E_{22}\in\mathbb{R}^{(n-n_1)\times (n-n_1)}$.
A direct calculation gives
\begin{align*}
\sep_{\F}(\gamma_1 I_{n_1}, \diag(\gamma_2 I_{n_2},\dots, \gamma_t I_{n_t}))
=\min_{2\le j\le t} |\gamma_j-\gamma_1|\ge g,
\end{align*}
where $\sep_{\F}(\cdots)$ is
the separation of two matrices \cite[p.247]{stewart1990matrix}. Let
$\tilde g=g-\|E_{11}\|_{\F}-\|E_{22}\|_{\F}$.
By \cite[Theorem 2.8 on p.238]{stewart1990matrix}, we conclude that
if
\begin{equation}\label{eq:lm2-pf-1}
\tilde g>0, \quad \frac {\|E_{21}\|_{\F}\|E_{12}\|_{\F}}{\tilde g^2}<\frac 14,
\end{equation}
then there is a unique $\what P_1\in{\mathbb R}^{(n-n_1)\times n_1}$ such that
\begin{equation}\label{eq:lm2-pf-2}
\|\what P_1\|_{\F}\le\frac {2\|E_{21}\|_{\F}}{\tilde g+\sqrt{\tilde g^2-4\|E_{21}\|_{\F}\|E_{12}\|_{\F}}}
\end{equation}
and \eqref{dqp1} holds. We have to show that the assumption \eqref{econ} ensures \eqref{eq:lm2-pf-1} and that
\eqref{eq:lm2-pf-2} implies \eqref{plam1} for $j=1$. In fact, under \eqref{econ},
\begin{align}
\tilde g 
   &\ge g-\sqrt{2(\|E_{11}\|_{\F}^2 + \|E_{22}\|_{\F}^2)} \nonumber\\
   &\ge g-\sqrt{2}\|Q^{-1}EQ\|_{\F} \nonumber\\
   &>(1-\sqrt 2\,\alpha)g \label{eq:lm2-pf-3}\\
   &> 0, \nonumber \\
\frac{\|E_{21}\|_{\F}\|E_{12}\|_{\F}}{\tilde g^2}
    &\le\frac{\|E_{21}\|_{\F}^2+\|E_{12}\|_{\F}^2}{2\tilde{g}^2} \nonumber\\
    &<\frac{\|E_{21}\|_{\F}^2+\|E_{12}\|_{\F}^2}{2(1-\sqrt 2\,\alpha)^2g^2} \nonumber\\
    &\le\frac{\|Q^{-1}EQ\|_{\F}^2}{2(1-\sqrt 2\,\alpha)^2g^2} \nonumber\\
    &\le\frac {\alpha^2}{2(1-\sqrt 2\,\alpha)^2} \label{eq:lm2-pf-4}\\
    & <\frac 14. \nonumber
\end{align}
They give \eqref{eq:lm2-pf-1}. It follows from \eqref{eq:lm2-pf-2}, \eqref{eq:lm2-pf-3}, and \eqref{eq:lm2-pf-4} that
\begin{align}
\|\what P_1\|_{\F}&\le\frac {2}{(1-\sqrt 2\,\alpha)+\sqrt{(1-\sqrt 2\,\alpha)^2-2\alpha^2}}
                   \cdot\frac {\|Q^{-1}EQ\|_{\F}}g \nonumber \\
              &=\frac {\tau}{\alpha}\cdot\frac {\|Q^{-1}EQ\|_{\F}}g \label{eq:lm2-pf-5} \\
              &<\tau. \nonumber
\end{align}
The inequality \eqref{plam1} for $j=1$ is a result of \eqref{eq:lm2-pf-5}.


Next we show \eqref{plam2} for $j=1$.
Pre-multiply \eqref{dqp1} by $[I_{n_1}, 0]$ to get, after rearrangement,
\[
\wtd B_{11}-\gamma_1 I_{n_1}= [I_{n_1}, 0] Q^{-1} EQ \begin{bmatrix} I_{n_1} \\ P_1 \end{bmatrix}.
\]
Since $\lambda(\wtd B_{11})=\{\tilde\mu_{1k}\}_{k=1}^{n_1}$,
we have
\begin{align*}
\sum_{k=1}^{n_1}|\tilde\mu_{1k}-\gamma_1|^2
&\le \left\|[I_{n_1}\, 0] Q^{-1} EQ \begin{bmatrix} I_{n_1} \\ \widehat{P}_1 \end{bmatrix}\right\|_{\F}^2 \\
&\le \left\|\begin{bmatrix} I_{n_1} \\ \widehat{P}_1 \end{bmatrix}\right\|_2^2 \|Q^{-1}EQ\|_{\F}^2\\
&\le (1+\|\widehat{P}_1^{\T}\widehat{P}_1\|_2) \|Q^{-1}EQ\|_{\F}^2 \\
&\le (1+\tau^2)\cdot \|Q^{-1}EQ\|_{\F}^2, 
\end{align*}
as was to be shown.

Finally, we show that $P$ is nonsingular by contradiction.
If $P$ were singular, let $x=[x_1^{\T}\, \dots \, x_t^{\T}]^{\T}$ be a nonzero vector
with $x_j\in\mathbb{R}^{n_j}$ such that $Px=0$.
We then have $x_j=-\sum_{\substack{k=1 \\ k\ne j}}^t P_{jk}x_k$ and thus
\[
\|x_j\|_2^2=\Big(\Big\|\sum_{\substack{k=1 \\ k\ne j}}^t P_{jk}x_k\Big\|_2\Big)^2
   \le \Big(\sum_{\substack{k=1 \\ k\ne j}}^t \|P_{jk}\|_2 \|x_k\|_2\Big)^2
   \le  (t-1) \sum_{\substack{k=1 \\ k\ne j}}^t \|P_{jk}\|_2^2 \|x_k\|_2^2.
\]
Therefore
\begin{align*}
\|x\|_2^2=\sum_{j=1}^t\|x_j\|_2^2
   &\le (t-1)\sum_{j=1}^t  \sum_{\substack{k=1 \\ k\ne j}}^t \|P_{jk}\|_2^2 \|x_k\|_2^2\\
   &= (t-1)\sum_{k=1}^t  \sum_{\substack{j=1 \\ j\ne k}}^t  \|P_{jk}\|_2^2 \|x_k\|_2^2 \\
   &\le (t-1)\sum_{k=1}^t    \|\widehat{P}_k\|_{\F}^2 \|x_k\|_2^2\\
   &< (t-1) \tau^2 \|x\|_2^2 < \|x\|_2^2,
\end{align*}
a contradiction. This completes the proof.
\end{proof}

\begin{remark}
Lemma~\ref{lem2} implies that when the off-block diagonal part of $Z$ is sufficiently small,
$QP$ is the eigenvector matrix of $B=\Bdiag_{\tau_n}(Z)$ with $P\approx I$,
and for each $j$ there are $n_j$ eigenvalues of $B$ that cluster around $\gamma_j$.
\end{remark}

\begin{lemma}\label{lem3}
Let $P=\big[P_{jk}\big]$ with $P_{jk}\in\mathbb{R}^{n_j\times n_k}$,
$P_{jj}=I_{n_j}$, and $\|\widehat{P}_j\|_{\F}\le \epsilon$,
where $\what P_j$ is defined as in \eqref{hatpj}, $0\le \epsilon<\tau$, and $\tau$ is defined by \eqref{eq:tau-alpha}.
Then
\begin{align}\label{pmi}
\|P-I\|_{\F}\le \sqrt{t}\,\epsilon.
\end{align}
Furthermore, let $W$, $\ww\in\mathbb{W}_{\tau_n}$,
$\wtd D=\diag(\wtd D_{11},\dots, \wtd D_{tt})\in\mathbb{D}_{\tau_n}$, and $\Pi\in{\mathbb P}_{\tau_n}$.
If $W\wtd D=\ww P\Pi$,
then $\wtd D$ is nonsingular and
\begin{align}\label{dsig}
\sqrt{1-2 \sqrt{t-1}\,\epsilon - (t-1)\epsilon^2} \le \sigma\le\sqrt{1+2 \sqrt{t-1}\,\epsilon + (t-1)\epsilon^2}.
\end{align}
for each singular value $\sigma$ of $\wtd D$.

\end{lemma}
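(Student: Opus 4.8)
The plan is to handle the two assertions separately. The estimate \eqref{pmi} is essentially free: since $P_{jj}=I_{n_j}$, the matrix $P-I$ has vanishing diagonal blocks and its $j$-th block column is exactly $\widehat{P}_j$ from \eqref{hatpj}, so $\|P-I\|_{\F}^2=\sum_{j=1}^{t}\|\widehat{P}_j\|_{\F}^2\le t\,\epsilon^2$, which gives \eqref{pmi}.

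For the singular values of $\wtd D$, write $W=[W_1,\dots,W_t]$ and $\ww=[\ww_1,\dots,\ww_t]$ with $W_j,\ww_j\in{\mathbb R}^{n\times n_j}$. Since $W,\ww\in\mathbb{W}_{\tau_n}$, every block column has orthonormal columns: $W_j^{\T}W_j=\ww_j^{\T}\ww_j=I_{n_j}$ and $\|\ww_j\|_2=1$. Recall that a $\Pi\in\mathbb{P}_{\tau_n}$ has exactly one nonzero subblock in each block row and each block column, namely an $n_j\times n_j$ permutation matrix $\Pi_j$ at position $(\pi(j),j)$ for some permutation $\pi$ of $\{1,\dots,t\}$ (so that $n_{\pi(j)}=n_j$). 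The first step is to equate the $k$-th block columns on the two sides of $W\wtd D=\ww P\Pi$; using that $\wtd D$ is block diagonal and the sparsity of $\Pi$, this reads $W_k\wtd D_{kk}=\ww\,C_k\,\Pi_k$, where $C_k$ denotes the $\pi(k)$-th block column of $P$. Because $P_{\pi(k),\pi(k)}=I_{n_{\pi(k)}}$, the block column $C_k$ differs from $\widehat{P}_{\pi(k)}$ only in its $\pi(k)$-th block row, where it equals $I_{n_{\pi(k)}}$; hence $\ww C_k=\ww_{\pi(k)}+\ww\widehat{P}_{\pi(k)}$ and $W_k\wtd D_{kk}=(\ww_{\pi(k)}+\ww\widehat{P}_{\pi(k)})\Pi_k$. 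Multiplying each side by its transpose, cancelling $W_k^{\T}W_k=I_{n_k}$, and using that $\Pi_k$ is orthogonal, I obtain that the squared singular values of $\wtd D_{kk}$ are precisely the eigenvalues of $Y_k^{\T}Y_k$, with $Y_k:=\ww_{\pi(k)}+\ww\widehat{P}_{\pi(k)}$.

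The decisive step is to bound $N_k:=\ww\widehat{P}_{\pi(k)}=\sum_{a\ne\pi(k)}\ww_aP_{a,\pi(k)}$ \emph{without} invoking $\|\ww\|_2$. Since $\widehat{P}_{\pi(k)}$ has only the $t-1$ nonzero block rows $P_{a,\pi(k)}$ ($a\ne\pi(k)$) and $\|\ww_a\|_2=1$ for all $a$, the Cauchy--Schwarz inequality gives $\|N_k\|_2\le\|N_k\|_{\F}\le\sum_{a\ne\pi(k)}\|P_{a,\pi(k)}\|_{\F}\le\sqrt{t-1}\,\|\widehat{P}_{\pi(k)}\|_{\F}\le\sqrt{t-1}\,\epsilon$. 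Writing $Y_k^{\T}Y_k=I_{n_k}+(\ww_{\pi(k)}^{\T}N_k+N_k^{\T}\ww_{\pi(k)}+N_k^{\T}N_k)$, the parenthesized symmetric perturbation has spectral norm at most $2\|N_k\|_2+\|N_k\|_2^2\le 2\sqrt{t-1}\,\epsilon+(t-1)\epsilon^2=:\eta$, so every eigenvalue of $Y_k^{\T}Y_k$ --- hence $\sigma^2$ for every singular value $\sigma$ of $\wtd D_{kk}$ --- lies in $[1-\eta,\,1+\eta]$. Finally, $\epsilon<\tau=(\sqrt2-1)/\sqrt{t-1}$ from \eqref{eq:tau-alpha} forces $\sqrt{t-1}\,\epsilon<\sqrt2-1$, whence $\eta<(\sqrt2-1)(\sqrt2+1)=1$ and $1-\eta>0$; thus all singular values of $\wtd D_{kk}$ are strictly positive, and taking square roots and letting $k$ range over $\{1,\dots,t\}$ yields \eqref{dsig} together with the nonsingularity of $\wtd D$. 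I expect this $\|\ww\|_2$-free bound on $N_k$ to be the only genuinely delicate point: a naive estimate would bring in $\|\ww\|_2$, which is controlled only by $\sqrt t$ through \eqref{eq:W-norm} (a bound itself proved with these same techniques), whereas using the orthonormality of the block columns of $\ww$ together with the block-sparsity of $\widehat{P}_{\pi(k)}$ removes that dependence entirely.
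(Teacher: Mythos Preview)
Your proof is correct and follows essentially the same approach as the paper: both arguments read off the $k$-th diagonal block of $\wtd D^{\T}\wtd D$ from $W\wtd D=\ww P\Pi$ as $(\ww P_{j'})^{\T}(\ww P_{j'})$ for the appropriate $j'$, split $\ww P_{j'}=\ww_{j'}+\ww\widehat{P}_{j'}$, and bound the perturbation from the identity by $2\sqrt{t-1}\,\epsilon+(t-1)\epsilon^2$ using the orthonormality of the block columns $\ww_a$ together with Cauchy--Schwarz over the $t-1$ nonzero block rows of $\widehat{P}_{j'}$. The only cosmetic difference is that the paper bounds $\|P_{j'}^{\T}\ww^{\T}\ww P_{j'}-I\|_{\F}$ term by term via $\|\ww_a^{\T}\ww_b\|_2\le 1$, whereas you first bound $\|N_k\|_{\F}$ and then the spectral norm of the symmetric perturbation; both yield the same constant and the same conclusion.
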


\begin{proof}
Since $P-I=\left[ \widehat{P}_1, \dots, \widehat{P}_t \right]$, we have
$$
\|P-I\|_{\F}
 =\left(\sum_{j=1}^t \Big\|\widehat{P}_j\Big\|_{\F}^2\right)^{1/2}
  \le \sqrt{t} \epsilon,
$$
which is \eqref{pmi}.

Next we show that $\wtd D$ is nonsingular and \eqref{dsig} holds.
Write $P=\begin{bmatrix} P_1, & \dots, & P_t \end{bmatrix}$ with $P_j\in\mathbb{R}^{n\times n_j}$.
Using $W\wtd D=\ww P\Pi$, we get
\begin{equation}\label{eq:lm3-pf-1}
\wtd D^{\T}W^{\T}W\wtd D=\Pi^{\T}P^{\T}\ww^{\T} \ww P\Pi.
\end{equation}
Since $W\in\mathbb{W}_{\tau_n}$,
the $j$th diagonal blocks at both sides of \eqref{eq:lm3-pf-1} read
\begin{align}\label{ddpwwp}
\wtd D_{jj}^{\T} \wtd D_{jj} =P_{j'}^{\T} \ww^{\T} \ww P_{j'},
\end{align}
where $1\le j'\le t$ as a result of the permutation $\Pi$.
Partition $\ww$ as $\ww=\big[\ww_1, \dots , \ww_t\big]$ with
$\ww_j\in\mathbb{R}^{n\times n_j}$.
We infer from $\ww\in\mathbb{W}_{\tau_n}$ that $\ww_j^{\T}\ww_j=I_{n_j}$
and $\big\|\ww_j^{\T}\ww_{\ell}\big\|_2\le 1$. To see the last inequality, we note
\begin{equation}\label{eq:WjWl}
|x_j^{\T}\ww_j^{\T}\ww_{\ell}x_{\ell}|
  \le\|\ww_jx_j\|_2\|\ww_{\ell}x_{\ell}\|_2=\|x_j\|_2\|x_{\ell}\|_2=1
\end{equation}
for any unit vectors $x_j\in\mathbb{R}^{n_j}$ and $x_{\ell}\in\mathbb{R}^{n_{\ell}}$.
Now using $P_{j'j'}=I_{n_{j'}}$ and $\|\widehat{P}_{j'}\|_{\F}\le \epsilon$,
we have
\begin{align*}
\big\|P_{j'}^{\T} \ww^{\T} \ww P_{j'}-I_{n_{j'}}\big\|_{\F}
&=\big\| \ww_{j'}^{\T} \ww \widehat{P}_{j'}
+  \widehat{P}_{j'}^{\T} \ww^{\T} \ww_{j'}
+\widehat{P}_{j'}^{\T} \ww^{\T} \ww \widehat{P}_{j'} \big\|_{\F}\\
&\le 2  \Big\|\sum_{\ell \ne j'}  \ww_{j'}^{\T} \ww_{\ell} P_{\ell j'}\Big\|_{\F} +
\Big\|\sum_{k\ne j'}\sum_{\ell\ne j'} P_{kj'}^{\T} \ww_k^{\T} \ww_\ell P_{\ell j'}\Big\|_{\F} \\
&\le 2  \sum_{\ell \ne j'}  \| P_{\ell j'}\|_{\F}+
\sum_{k\ne j'}\sum_{\ell \ne j'} \big\|P_{kj'}\big\|_{\F} \big\|P_{\ell j'}\big\|_{\F}\\
&=2  \sum_{\ell \ne j'}  \| P_{\ell j'}\|_{\F}+ \Big(\sum_{k\ne j'}\big\|P_{kj'}\big\|_{\F}\Big)^2\\
&\le  2\Big[(t-1)\sum_{k\ne j'}\big\|P_{kj'}\big\|_{\F}^2\Big]^{1/2}
        +(t-1)\sum_{k\ne j'}\big\|P_{kj'}\big\|_{\F}^2\\
&\le 2 \sqrt{t-1}\, \epsilon + (t-1)\epsilon^2.
\end{align*}
Combining it with \eqref{ddpwwp}, we get
\[
\|\wtd D_{jj}^{\T}\wtd D_{jj}-I_{n_j}\|_{\F}\le 2 \sqrt{t-1}\, \epsilon + (t-1)\epsilon^2
<2 \sqrt{t-1} \tau + (t-1)\tau^2=1,
\]
which implies that $\wtd D_{jj}$ is nonsingular,
and for any singular value $\sigma$ of $\wtd D_{jj}$, it holds that
\[
-1<- 2 \sqrt{t-1}\,\epsilon - (t-1)\epsilon^2 \le \sigma^2 -1\le 2 \sqrt{t-1}\,\epsilon + (t-1)\epsilon^2<1.
\]
The  conclusion follows immediately since $\wtd D\in\mathbb{D}_{\tau_n}$.
\end{proof}

We now present a proof of \eqref{eq:W-norm}. Since $\|\ww\|_2$ is equal to
the square root of the largest eigenvalue of $\ww^{\T}\ww$ and the latter is no smaller than the largest
diagonal entry of $\ww^{\T}\ww$, we have $\|\ww\|_2\ge 1$. Let
$x=\big[x_1^{\T},x_2^{\T},\ldots,x_t^{\T}\big]^{\T}$ with
$x_j\in\mathbb{R}^{n_j}$. Similarly to \eqref{eq:WjWl}, we find
$$
x^{\T}\ww^{\T}\ww x
   =\sum_{j,\,\ell}x_j^{\T}\ww_j^{\T}\ww_{\ell}x_{\ell}
   \le\sum_{j,\,\ell}\|x_j\|_2\|x_{\ell}\|_2
   \le\frac 12\sum_{j,\,\ell}\big(\|x_j\|_2^2+\|x_{\ell}\|_2^2\big)=t\|x\|_2^2,
$$
and thus $\|\ww\|_2\le\sqrt t$.

\subsection{Proof of Theorem~\ref{thm:main}}\label{subsec:proof}
Recall $Q=W^{-1}\ww$ and let $Z=Q\Gamma Q^{-1}$. Partition $Z=\big[Z_{jk}\big]$ with $Z_{jk}\in\mathbb{R}^{n_j\times n_k}$,
       and let $\lambda(Z_{jj})=\{\mu_{jk}\}_{k=1}^{n_j}$.
The proof will be completed in the following four steps:
\begin{itemize}
\item[Step 1.] We will show that $Z$ is approximately  $\tau_n$-block diagonal. Specifically, we show
       \begin{align}\label{offbz}
       \|\OffBdiag_{\tau_n}(Z)\|_{\F}
           \le\frac{\left(\sum_{i=1}^m\|\OffBdiag_{\tau_n}(R_i)\|_{\F}^2\right)^{1/2}}{\omega_{\uniq}}
           \le\frac{\delta}{\omega_{\uniq}},
       \end{align}
       where $R_i$ is given by \eqref{eq:Ri}.

\item[Step 2.]
       We will show that the eigenvalues of $Z_{jj}$ cluster around a unique $\gamma_{j'}$
       by showing that there exists a permutation $\pi$ of $\{1,2,\dots, t\}$
       such that
       \begin{align}\label{mujk}
       |\mu_{jk}-\gamma_{\pi(j)}|<\frac{g}{2},\quad
       |\mu_{jk}-\gamma_i|>\frac{g}{2},\quad  \mbox{for any $i\ne \pi(j)$}.
       \end{align}
       In the other word, each of the $t$ disjoint intervals $(\gamma_i-g/2,\gamma_i+g/2)$ contains
       one and only one $\lambda(Z_{jj})$.

\item[Step 3.] We will show that there exist
       a  permutation $\Pi\in{\mathbb P}_{\tau_n}$ and
       a nonsingular $P\equiv \big[P_{jk}\big]\in{\mathbb R}^{n\times n}$  with $P_{jk}\in\mathbb{R}^{n_j\times n_k}$ and
        $P_{jj}=I_{n_j}$, satisfying \eqref{plam1},
       such that
       $\wtd D=QP\Pi\in{\mathbb D}_{\tau_n}$.

\item[Step 4.] We will prove \eqref{ineq:main}.
\end{itemize}


\smallskip
\noindent{\em Proof\/} of Step 1.
Recall $\wtd R_i=\ww^{\T}\wtd A_i \ww \Gamma- \Gamma \ww^{\T} \wtd A_i \ww$ of \eqref{eq:diag-res}. We have
\begin{align*}
\wtd R_i
&=\ww^{\T}A_i \ww \Gamma- \Gamma \ww^{\T} A_i \ww
+\ww^{\T}\Delta A_i \ww \Gamma- \Gamma \ww^{\T} \Delta A_i \ww\\
&=Q^{\T} W^{\T}A_iW Q\Gamma -\Gamma Q^{\T} W^{\T} A_iW Q
+\ww^{\T}\Delta A_i \ww \Gamma- \Gamma \ww^{\T} \Delta A_i \ww,
\end{align*}
from which it follows that
\begin{align*}
 R_i&=W^{\T} A_i W Z - Z^{\T} W^{\T} A_i W\\
     &= Q^{-\T}\wtd R_iQ^{-1}-W^{\T} \Delta A_i \ww\Gamma Q^{-1} + Q^{-\T}\Gamma \ww^{\T} \Delta A_i W.
\end{align*}
Putting all of them for $1\le i\le m$ together, we get
\begin{align*}
\begin{bmatrix} R_1\\ \vdots \\ R_m\end{bmatrix}
=(I_m\otimes Q^{-\T} )  \begin{bmatrix} \wtd R_1\\ \vdots \\ \wtd R_m\end{bmatrix} Q^{-1}
& - (I_m\otimes W^{\T}) \begin{bmatrix} \Delta A_1\\ \vdots \\ \Delta A_m\end{bmatrix} \ww^{\T}\Gamma Q^{-1}\\
&+ \big[I_m\otimes (Q^{-\T}\Gamma\ww^{\T})\big] \begin{bmatrix} \Delta A_1\\ \vdots \\ \Delta A_m\end{bmatrix} W.
\end{align*}
Consequently,
\begin{align*}
\left(\sum_{i=1}^m\|R_i\|_{\F}^2\right)^{1/2}
\le \|Q^{-1}\|_2^2\, \tilde r
+2 \|Q^{-1}\|_2\|W\|_2\|\ww\|_2 \delta_{\cA}= \delta.
\end{align*}
Combine it with \eqref{offz} in Lemma~\ref{lem1} to conclude \eqref{offbz}.
\qed

\smallskip
\noindent{\em Proof\/} of Step 2.
Using Lemma~\ref{lem1}, we know that there exists $\hat\mu_j$ such that
\begin{align}\label{summu}
\sum_{k=1}^{n_j}|\mu_{jk} -\hat\mu_j|^2
\le\frac{\sum_{i=1}^m\|\Bdiag_{\tau_n}(R_i)\|_{\F}^2}{\omega_{\robu}^2}
\le \left(\frac{\delta}{\omega_{\robu}}\right)^2.
\end{align}
Then for any $\mu_{j\,k_1}$, $\mu_{j\,k_2}$,
we have
\begin{align}
|\mu_{j\,k_1}-\mu_{j\,k_2}|^2
&\le (|\mu_{j\,k_1} - \hat\mu_j| + |\mu_{j\,k_2} - \hat\mu_j|)^2 \label{mumu}\\
&\le 2(|\mu_{j\,k_1} - \hat\mu_j|^2 + |\mu_{j\,k_2} - \hat\mu_j|^2)\notag\\
&\le 2\sum_{k=1}^{n_j} | \mu_{jk} -\hat\mu_j|^2
\le 2 \left(\frac{\delta}{\omega_{\robu}}\right)^2. \nonumber
\end{align}
Let $\argmin_{\ell}|\mu_{jk}-\gamma_{\ell}|=\ell_{jk}$.
Noticing that
\[
\Gamma= Q^{-1}ZQ=Q^{-1} \Bdiag_{\tau_n}(Z)Q +Q^{-1}\OffBdiag_{\tau_n}(Z)Q.
\]
By a result of Kahan~\cite{kahan1975spectra} (see also  \cite[Remark~3.3]{sun1996variation}),
we have
\begin{align}\label{eigerror}
\sum_{j=1}^t\sum_{k=1}^{n_j}|\mu_{jk} - \gamma_{\ell_{jk}}|^2\le 2\|Q^{-1}\OffBdiag_{\tau_n}(Z)Q\|_{\F}^2.
\end{align}
Now we declare $\ell_{j1}=\dots=\ell_{jn_j}=j'$ for all $j=1,2,\ldots,t$.
Because otherwise, say $\ell_{j1}\ne \ell_{j2}$,
we have
\begin{subequations}
\begin{alignat}{2}
4\alpha^2 g^2
&>4\kappa^2_2(Q)\frac{\delta^2}{\omega_{\uniq}^2} &&\quad\mbox{(by \eqref{eq:pert-tiny-cond})} \nonumber\\
&\ge 4\|Q^{-1}\OffBdiag_{\tau_n}(Z)Q\|_{\F}^2 &&\quad\mbox{(by \eqref{offbz})} \label{gg1}\\
&\ge 2\sum_{j=1}^t\sum_{k=1}^{n_j}|\mu_{jk} - \gamma_{\ell_{jk}}|^2 &&\quad\mbox{(by \eqref{eigerror})} \nonumber\\
&\ge 2(|\mu_{j1} - \gamma_{\ell_{j1}}|^2 + |\mu_{j2} - \gamma_{\ell_{j2}}|^2) &&\nonumber\\
&\ge (|\mu_{j1} - \gamma_{\ell_{j1}}| + |\mu_{j2} - \gamma_{\ell_{j2}}|)^2 && \nonumber\\
&\ge(  | \gamma_{\ell_{j1}} - \gamma_{\ell_{j2}}| - |\mu_{j1}  - \mu_{j2}| )^2 && \notag\\
&\ge \left(g-\sqrt{2}\,\frac{\delta}{\omega_{\robu}}\right)^2 &&\quad\mbox{(by \eqref{mumu})} \nonumber\\
&>[1-(1-2\alpha)]^2 g^2 &&\quad\mbox{(by \eqref{eq:pert-tiny-cond})} \nonumber\\
&= 4\alpha^2 g^2,\label{gg3}
\end{alignat}
\end{subequations}
a contradiction.
Now using \eqref{eigerror}, \eqref{offbz} and \eqref{eq:pert-tiny-cond}, we get
\begin{align*}
\max_{k}|\mu_{jk}-\gamma_{j'}|
&\le\left(\sum_{k=1}^{n_j}|\mu_{jk}-\gamma_{j'}|^2\right)^{1/2}
\le \sqrt{2}\|Q^{-1}\OffBdiag_{\tau_n}(Z)Q\|_{\F}\\
&\le\sqrt{2}\kappa_2(Q)\|\OffBdiag_{\tau_n}(Z)\|_{\F}
\le \frac{\sqrt{2}\kappa_2(Q)\delta}{\omega_{\uniq}}
<\sqrt{2}\alpha g<\frac12 g.
\end{align*}
Thus, we know that each $j\in\{1,2,\ldots, t\}$ corresponds to a unique ${j'}$
satisfying that
$|\mu_{jk}-\gamma_{j'}|<{g}/{2}$ and
$|\mu_{jk}-\gamma_i|>{g}/{2}$ for any $i\ne j'$.
This is \eqref{mujk}.
\qed

\smallskip
\noindent{\em Proof\/} of Step 3.
Notice that \eqref{gg1} implies that $\|Q^{-1}\OffBdiag_{\tau_n}(Z)Q\|_{\F}\le \alpha g$,
i.e.,  \eqref{econ} holds.
By Lemma~\ref{lem2},
there exists a $\tau_n$-block diagonal matrix
$\wtd B=\diag(\wtd B_{11},\dots, \wtd B_{tt})$
and a nonsingular matrix $P\equiv\big[P_{jk}\big]$ with $P_{jk}\in\mathbb{R}^{n_j\times n_k}$ and
 $P_{jj}=I_{n_j}$, satisfying \eqref{plam}, such that
\begin{equation}\label{eq:main-pf-1}
\Bdiag_{\tau_n}(Z) (QP) =(QP)\wtd B.
\end{equation}
Denote by $\lambda(\wtd B_{jj})=\{\tilde\mu_{jk}\}_{k=1}^{n_j}$.
By \eqref{plam2}, \eqref{offbz} and \eqref{eq:pert-tiny-cond}, we know
\begin{align*}
\max_k|\tilde\mu_{jk}-\gamma_j|
&\le\sqrt{\sum_k|\tilde\mu_{jk}-\gamma_j|^2} \\
&\le (1+\tau^2)\kappa_2(Q)\|\OffBdiag_{\tau_n}(Z)\|_{\F}\\
&<(1+\tau^2) \kappa_2(Q) \frac{\delta}{\omega_{\uniq}}< (1+\tau^2)\alpha g<\frac{g}{2}.
\end{align*}
What this means is that each of the $t$ disjoint intervals $(\gamma_i-g/2,\gamma_i+g/2)$ contains
one and only one $\lambda(\wtd B_{jj})$. Previously in Step 2,
we proved that each of the $t$ disjoint intervals $(\gamma_i-g/2,\gamma_i+g/2)$ contains
one and only one $\lambda(Z_{jj})$ as well. On the other hand, we also have
$\lambda(\Bdiag_{\tau_n}(Z))=\lambda(\wtd B)$ by \eqref{eq:main-pf-1}. Therefore, there is permutation
$\pi$ of $\{1,2,\ldots,t\}$ such that
\begin{equation}\label{eq:main-pf-2}
\lambda(\wtd B_{\pi(j)\pi(j)})=\lambda(Z_{jj})\quad\mbox{for $1\le j\le t$}.
\end{equation}
Let $\Pi$ be the permutation
matrix such that
\begin{equation}\label{eq:main-pf-2a}
\Pi^{\T} \wtd B \Pi=\diag(\wtd B_{\pi(1)\pi(1)},\dots,\wtd B_{\pi(t)\pi(t)}).
\end{equation}
It can be seen that $\Pi\in{\mathbb P}_{\tau_n}$, i.e., it is
$\tau_n$-block structure preserving. Finally by \eqref{eq:main-pf-2} and \eqref{eq:main-pf-2a},
\begin{align}
\diag(Z_{11},\dots,Z_{tt})(QP \Pi)
   &=QP\wtd B \Pi \label{eq:main-pf-3}\\
   &=(QP\Pi) \Pi^{\T} \wtd B \Pi \nonumber \\
   &=(QP\Pi)\diag(\wtd B_{\pi(1)\pi(1)},\dots,\wtd B_{\pi(t)\pi(t)}). \nonumber
\end{align}
Let $\wtd D=QP\Pi\equiv\big[\wtd D_{jk}\big]$ with $\wtd D_{jk}\in\mathbb{R}^{n_j\times n_k}$. The equation \eqref{eq:main-pf-3} becomes
\[
\diag(Z_{11},\dots,Z_{tt})\wtd D
=\wtd D\diag(\wtd B_{\pi(1)\pi(1)},\dots,\wtd B_{\pi(t)\pi(t)})
\]
which yields $Z_{jj}\wtd D_{jk}=\wtd D_{jk}\wtd B_{\pi(k)\pi(k)}$.
Recalling \eqref{eq:main-pf-2} and
$\lambda(Z_{jj})\cap\lambda(Z_{kk})=\emptyset$ for $j\ne k$ by \eqref{mujk}, we conclude
that $\wtd D_{jk}=0$ for $j\ne k$,
i.e., $\wtd D$ is $\tau_n$-block diagonal.
\qed

\smallskip
\noindent{\em Proof\/} of Step 4.
Noticing that $Q=W^{-1}\ww$ and $\wtd D=QP\Pi$ in Step 3,
we have $W\wtd D=\ww P\Pi$.
Then using Lemma~\ref{lem3},
we know that $\wtd D$ is nonsingular and for any singular value $\sigma$ of $\wtd D$, and \eqref{dsig} holds  with
\[
\epsilon= \frac{\tau}{\alpha}\cdot\frac{\|Q^{-1}\OffBdiag_{\tau_n}(Z)Q\|_{\F}}{g}.
\]
By \eqref{offbz}, we have
\begin{equation}\label{epsub}
\epsilon \le \frac{\tau}{\alpha}\cdot \frac{\kappa_2(Q)\delta}{g\,\omega_{\uniq}}=\epsilon_*.
\end{equation}
Now let  $\wtd D_{jj}=U_j\Sigma_j V_j^{\T}$ be the SVD of $\wtd D_{jj}$.
Denote by
$U=\diag(U_1,\dots, U_t)$, $V=\diag(V_1,\dots, V_t)$ and $D=\Pi VU^{\T} \Pi^{\T}$.
It can be verified that $D$ is orthogonal and $\tau_n$-block diagonal.
It follows from $W\wtd D=\ww P\Pi$ that
\begin{align*}
W=\ww P \Pi \wtd D^{-1}
&=\ww (\Pi \wtd D^{-1}\Pi^{\T} )\Pi + \ww \OffBdiag_{\tau_n}(P)\Pi \wtd D^{-1}\\
&=\ww D\Pi + \ww (\Pi \wtd D^{-1}\Pi^{\T} -D)\Pi + \ww \OffBdiag_{\tau_n}(P)\Pi \wtd D^{-1}\\
&=\ww D\Pi + \ww \Pi V( \Sigma^{-1} - I)U + \ww \OffBdiag_{\tau_n}(P)\Pi \wtd D^{-1}.
\end{align*}
Using Lemma~\ref{lem3}, we have for ${\scriptstyle\p}\in\{2,{\scriptstyle\F}\}$
\begin{align*}
\big\|W - \ww D\Pi \big\|_{\p}
&=\big\| \ww \Pi V( \Sigma^{-1} - I)U + \ww \OffBdiag_{\tau_n}(P)\Pi \wtd D^{-1} \big\|_{\p}\\
&\le \big\|\ww\big\|_{\p} \left( \frac{1+\sqrt{t}\,\epsilon_*}{\sqrt{1-2\sqrt{t-1}\epsilon_* -
(t-1)\epsilon_*^2}} - 1\right)\\
   &= \big\|\ww\big\|_{\p}\big[(\sqrt{t}+\sqrt{t-1})\epsilon +O(\epsilon^2)\big].
\end{align*}
Combine it with \eqref{epsub} to conclude the proof of \eqref{ineq:main}.
\qed

\section{Numerical examples}\label{sec:numer}
In  this section, we present some random numerical tests to validate our theoretical results.
All numerical examples were carried out using {\sc matlab}, with machine unit roundoff $2^{-53}\approx 1.1\times 10^{-16}$.

Let us start by explain how the testing examples are constructed.
Given a partition $\tau_n=(n_1,\dots,n_t)$ of $n$ and the number $m$ of matrices, we generate
the matrix sets $\cA=\{A_i\}_{i=1}^m$ and $\wtd\cA=\{\wtd A_i\}_{i=1}^m$
as follows.
\begin{enumerate}
\item Randomly generate  $W\equiv[W_1, \dots, W_t]\in\mathbb{W}_{\tau_n}$.
      This is done by first generating an  $n\times n$ random matrix from the standard normal distribution
      and then orthonormalizing its first $n_1$ columns, the next $n_2$ columns, $\ldots$, and the last $n_t$
      columns, respectively.
      Set $V=W^{-\T}$;

\item Generate $m$ $\tau_n$-block diagonal matrices $D_j$ randomly
      from the standard normal distribution and set $A_j=V D_j V^{\T}$ for $1\le j\le m$. This makes sure that
      $\cA$ is $\tau_n$-block diagonalizable.

\item Generate $m$ noise matrices $N_j$ also randomly
      from the standard normal distribution and set $\wtd A_j=A_j + \xi N_j$, where
      $\xi$ is a parameter for controlling noise level. $\wtd\cA$ is likely not $\tau_n$-block diagonalizable
      but it is approximately. An approximate block diagonalizer $\ww\equiv[\wtd W_1, \dots,\wtd W_t]\in\mathbb{W}_{\tau_n}$
      of $\wtd\cA$ is computed by JBD-NCG \cite{nion2011tensor} followed
      by orthonormalization as in item (1) above.
\end{enumerate}
For comparison purpose, we estimate the relative error between $\ww$ and $W$ as measured by \eqref{eq:measure}
for ${\scriptstyle\p}={\scriptstyle\F}$ as follows. We have to minimize
$$
\|W-\ww D\Pi\|_{\F}^2=\|W\|_{\F}^2-2\trace(W^{\T}\ww D\Pi)+\|\ww\|_{\F}^2
$$
over orthogonal $D\in{\mathbb D}_{\tau_n}$ and $\Pi\in{\mathbb P}_{\tau_n}$, which is equivalent to
maximizing
$$
\sum_{j=1}^t\trace(W_j^{\T}\ww_{\pi(j)}D_{\pi(j)}\Pi_j)
$$
over orthogonal $D_{\pi(j)}$, permutations $\pi$ of $\{1,2,\ldots,t\}$, subject to $n_j=n_{\pi(j)}$,
which again is equivalent to
\begin{equation}\label{eq:relerr-W-comp}
\max_{\pi}\sum_{j=1}^t(\mbox{the sum of the singular values of $W_j^{\T}\ww_{\pi(j)}$})
\end{equation}
subject to $n_j=n_{\pi(j)}$. Abusing notation a little bit, we let $\pi$ be the one that achieve the optimal in
\eqref{eq:relerr-W-comp}, perform the singular value decomposition  $\ww_{\pi(j)}^{\T}W_j=U_j\Sigma_j V_j^{\T}$,
and set $D=\diag(U_{\pi(1)}V_{\pi(1)}^{\T},\dots, U_{\pi(t)}V_{\pi(t)}^{\T})$.
Finally, the error \eqref{eq:measure} for ${\scriptstyle\p}={\scriptstyle\F}$ is given by
\begin{equation}\label{eq:measure-est}
\frac{\|W-\ww D\Pi\|_{\F}}{\|\ww\|_{\F}}
\end{equation}
with $D$ as above and $\Pi\in\mathbb{P}_{\tau_n}$ as determined by the optimal $\pi$. There doesn't seem to be a simple way to compute
\eqref{eq:measure} for ${\scriptstyle\p}={\scriptstyle 2}$.

To generate error bounds by Theorem~\ref{thm:main}, we have to decide what $\Gamma$ to use. Ideally, we should use
the one that minimize the right-hand side of \eqref{ineq:main}, but we don't have an simple way to do that.
For the tests below, we use 50 different $\Gamma$ and pick the best bound. Specifically,
we use a particular one
\begin{equation}\label{eq:Gamma0}
\Gamma=\diag(-1, -1+ \frac{2}{t-1}, -1+ \frac{4}{t-1},\dots, 1)
\end{equation}
as well as $49$ random ones with their diagonal entries $\gamma_1,\dots,\gamma_t$ randomly drawn from the interval $(-1,1)$
with the uniform distribution. Our experience suggests that the particular
$\Gamma$ in \eqref{eq:Gamma0} usually leads to bounds having the same order as the best one produced by the $49$ random $\Gamma$.
However, it can happen that the best one is much better than and up to one tenth of
than by the particular $\Gamma$,  although such  extremes do not happen very often.

We will report our numerical tests according to five different testing scenarios:
varying numbers of matrices (test 1),
varying matrix sizes (test 2),
varying numbers of diagonal blocks (test 3),
varying noise levels (test 4),
and varying condition numbers $\cond(\cA)$ (test 5).
We will examine these quantities:
the modulus of uniqueness $\omega_{\uniq}$, the modulus of non-divisibility $\omega_{\robu}$,
$\delta$ as defined in \eqref{eq:del},
the {\em ratio\/} as the quotient of $\delta$ over the right hand side of \eqref{eq:pert-tiny-cond}
(to make sure that \eqref{eq:pert-tiny-cond} is satisfied),
$\varepsilon_{\berr}\equiv\varepsilon_{\berr}(\wtd\cA;\wtd W)$ the upper bound as in \eqref{eq:backerr2} for the backward error,
$\cond({\cA})$ the condition number as defined in \eqref{condnum},
$\varepsilon_{\ub}$ as  in \eqref{ineq:main}, and finally
the {\em error\/} in $\wtd W$ as in \eqref{eq:measure-est}.

\begin{table}[ht]
\belowrulesep=1pt
\aboverulesep=1pt
  \centering
{\small
\begin{tabular}{c||c|c|c|c|c|c|c|c}\toprule
 $m$ & $\omega_{\uniq}$ & $\omega_{\robu}$  & $\delta$ & {\em ratio\/} & $\varepsilon_{\berr}$ &  $\cond(\cA)$ & $\varepsilon_{\ub}$ & {\em error} \\ \hline\hline
 4 & 1.7e+00 & 1.9e+00 & 4.8e-10 & 1.4e-09 & 3.4e-10 & 2.4e+03 & 1.3e-09 & 1.9e-11 \\ \hline
 8 & 3.8e+00 & 3.9e+00 & 2.2e-10 & 1.5e-09 & 3.2e-10 & 1.6e+03 & 1.4e-09 & 1.9e-11 \\ \hline
 16 & 6.6e+00 & 6.4e+00 & 9.8e-10 & 7.3e-10 & 3.3e-10 & 1.3e+03 & 6.8e-10 & 1.9e-11 \\ \hline
 32 & 1.0e+01 & 1.0e+01 & 8.5e-10 & 6.5e-10 & 2.7e-10 & 1.2e+03 & 6.0e-10 & 1.8e-11 \\ \hline
 64 & 1.6e+01 & 1.6e+01 & 1.3e-09 & 4.2e-10 & 1.8e-10 & 1.2e+03 & 3.8e-10 & 1.2e-11 \\ \hline
 128 & 2.5e+01 & 2.5e+01 & 2.2e-09 & 4.4e-10 & 2.1e-10 & 1.2e+03 & 4.0e-10 & 1.4e-11 \\ \hline
 256 & 3.6e+01 & 3.6e+01 & 1.8e-09 & 4.2e-10 & 1.7e-10 & 1.2e+03 & 3.9e-10 & 1.1e-11
  \\ \bottomrule
\end{tabular}
}
\caption{\small Bound vs. $m$, the number of matrices in $\cA$ for  $\tau_9=(3,3,3)$}
\label{tab1}
\end{table}

\smallskip
\noindent{\bf Test 1: number of matrices.}\quad
In this test, we fix $\xi=10^{-12}$ and vary the number $m$ of  matrices in the matrix set $\cA$.
The numerical results are displayed in Tables~\ref{tab1} and \ref{tab2} for the two different partitions $\tau_9=(3,3,3)$ and $\tau_6=(1,2,3)$,
respectively.
\begin{table}[ht]
  \centering
{\small
\begin{tabular}{c||c|c|c|c|c|c|c|c}\toprule
 $m$ & $\omega_{\uniq}$ & $\omega_{\robu}$ & $\delta$ & {\em ratio\/} & $\varepsilon_{\berr}$ &  $\cond(\cA)$ & $\varepsilon_{\ub}$ & {\em error} \\ \hline\hline
 4 & 8.1e-01 & 2.7e+00 & 1.4e-10 & 8.8e-10 & 3.6e-11 & 9.7e+04 & 8.1e-10 & 7.3e-12 \\ \hline
 8 & 3.0e+00 & 4.7e+00 & 1.7e-10 & 5.6e-10 & 7.3e-11 & 2.8e+04 & 5.2e-10 & 7.5e-12 \\ \hline
 16 & 5.9e+00 & 7.4e+00 & 2.0e-10 & 4.5e-10 & 7.7e-11 & 1.8e+04 & 4.1e-10 & 5.8e-12 \\ \hline
 32 & 8.0e+00 & 1.1e+01 & 3.3e-10 & 4.0e-10 & 7.9e-11 & 1.8e+04 & 3.7e-10 & 6.4e-12 \\ \hline
 64 & 9.7e+00 & 1.6e+01 & 4.1e-10 & 3.4e-10 & 5.3e-11 & 1.9e+04 & 3.1e-10 & 5.9e-12 \\ \hline
 128 & 1.6e+01 & 2.3e+01 & 4.7e-10 & 3.2e-10 & 3.9e-11 & 1.7e+04 & 2.9e-10 & 4.3e-12 \\ \hline
 256 & 2.2e+01 & 3.2e+01 & 5.7e-10 & 4.3e-10 & 3.3e-11 & 1.7e+04 & 3.9e-10 & 3.4e-12
  \\ \bottomrule
\end{tabular}
}
\caption{\small Bound vs. $m$, the number of matrices in $\cA$ for  $\tau_6=(1,2,3)$}
\label{tab2}
\end{table}
We summarize our observations from Tables~\ref{tab1} and \ref{tab2} as follows.
\begin{enumerate}
\item For all $m$, the {\em ratio}s are far less than $1$.
      In the other word, \eqref{eq:pert-tiny-cond} is satisfied for all, and hence the bound \eqref{ineq:main} holds.

\item For all $m$, $\varepsilon_{\ub}$ provides a very good upper bound on the {\em error}.

\item As $m$ increases, i.e., as we expand the matrix set $\cA$,
      the modulus of uniqueness and modulus of non-divisibility increase as well,
      and the condition number $\cond(\cA)$ decreases at first, then remains almost the same.
\end{enumerate}

\smallskip
\noindent{\bf Test 2: matrix sizes.}\quad
In this test, we fix $\xi=10^{-12}$, $m=16$, and use two partitions $\tau_n=p\times (3,3,3)$
or $\tau_n=p\times (1,2,3)$, where $p=1,2,\dots,7$.
Then the matrix size $n=9p$ or $6p$ will increase as $p$ increases.
We display the numerical results in Tables~\ref{tab3} and \ref{tab4}.
We can see from  Tables~\ref{tab3} and \ref{tab4} that
$\varepsilon_{\ub}$ provides a very good upper bound on the {\em error} for different sizes of matrices.

\begin{table}[ht]
\belowrulesep=1pt
\aboverulesep=1pt
  \centering
{\small
\begin{tabular}{c||c|c|c|c|c|c|c|c}\toprule
 $n$ & $\omega_{\uniq}$ & $\omega_{\robu}$  & $\delta$ & {\em ratio\/} & $\varepsilon_{\berr}$ &  $\cond(\cA)$ & $\varepsilon_{\ub}$ & {\em error} \\ \hline\hline
 9 & 6.8e+00 & 6.8e+00 & 2.1e-10 & 7.7e-10 & 4.5e-11 & 2.4e+02 & 7.1e-10 & 3.9e-12 \\ \hline
 18 & 1.1e+01 & 1.1e+01 & 2.5e-09 & 2.1e-09 & 1.3e-09 & 6.3e+03 & 2.0e-09 & 5.6e-11 \\ \hline
 27 & 1.2e+01 & 1.2e+01 & 1.1e-08 & 5.1e-09 & 4.3e-09 & 1.7e+04 & 4.7e-09 & 1.2e-10 \\ \hline
 36 & 1.4e+01 & 1.4e+01 & 6.7e-09 & 2.3e-09 & 1.2e-09 & 5.6e+03 & 2.1e-09 & 3.2e-11 \\ \hline
 45 & 1.6e+01 & 1.6e+01 & 3.1e-09 & 2.0e-09 & 1.2e-09 & 4.4e+03 & 1.8e-09 & 1.8e-11 \\ \hline
 54 & 1.8e+01 & 1.8e+01 & 1.7e-08 & 4.7e-09 & 6.1e-09 & 2.6e+04 & 4.4e-09 & 5.7e-11 \\ \hline
 63 & 1.9e+01 & 1.9e+01 & 2.1e-07 & 5.4e-08 & 7.2e-08 & 9.4e+03 & 5.0e-08 & 7.7e-10
  \\ \bottomrule
\end{tabular}
}
\caption{\small Bound vs. matrix size $n=9p$ for  $\tau_n=p\times (3,3,3)$}
\label{tab3}
\end{table}

\begin{table}[ht]
\belowrulesep=1pt
\aboverulesep=1pt
  \centering
{\small
\begin{tabular}{c||c|c|c|c|c|c|c|c}\toprule
 $n$ & $\omega_{\uniq}$ & $\omega_{\robu}$  & $\delta$ & {\em ratio\/} & $\varepsilon_{\berr}$ &  $\cond(\cA)$ & $\varepsilon_{\ub}$ & {\em error} \\ \hline\hline
 6 & 4.2e+00 & 5.7e+00 & 1.8e-10 & 3.7e-10 & 2.6e-11 & 1.0e+02 & 3.4e-10 & 4.6e-12 \\ \hline
 12 & 6.8e+00 & 6.7e+00 & 3.5e-10 & 7.9e-10 & 7.6e-11 & 4.8e+02 & 7.3e-10 & 6.0e-12 \\ \hline
 18 & 8.8e+00 & 9.4e+00 & 5.7e-10 & 1.6e-09 & 3.5e-10 & 5.5e+03 & 1.4e-09 & 1.2e-11 \\ \hline
 24 & 9.0e+00 & 8.5e+00 & 4.7e-09 & 3.1e-09 & 1.5e-09 & 4.4e+03 & 2.8e-09 & 5.0e-11 \\ \hline
 30 & 9.5e+00 & 9.0e+00 & 9.2e-09 & 4.8e-09 & 3.6e-09 & 7.2e+03 & 4.4e-09 & 5.5e-11 \\ \hline
 36 & 1.2e+01 & 1.0e+01 & 3.8e-09 & 4.4e-09 & 2.3e-09 & 1.9e+03 & 4.1e-09 & 4.4e-11 \\ \hline
 42 & 1.3e+01 & 1.2e+01 & 6.9e-09 & 4.7e-09 & 6.5e-09 & 1.2e+05 & 4.4e-09 & 4.5e-11
  \\ \bottomrule
\end{tabular}
}
\caption{\small Bound vs. matrix size $n=6p$ for  $\tau_n=p\times (1,2,3)$}
\label{tab4}
\end{table}

\smallskip
\noindent{\bf Test 3: number of diagonal blocks.}\quad
In this test, we fix $\xi=10^{-12}$, $m=16$, and
generate the partition $\tau_n$ randomly using {\sc matlab} command {\tt randi(5,t,1)}.
In the other word, the block diagonal matrices $D_j$ have $t$ diagonal blocks and
the order of the $i$th block is $\tau_n(i)$, randomly drawn from $\{1,2,\ldots,5\}$ with the uniform  distribution.
For $t=3,4,\dots,9$,
we display the numerical results in Table~\ref{tab5}.
We can see from  Table~\ref{tab5}  that
$\varepsilon_{\ub}$ provides a very good upper bound on the {\em error} for the different numbers of diagonal blocks.

\begin{table}[ht]
\belowrulesep=1pt
\aboverulesep=1pt
  \centering
{\small
\begin{tabular}{c||c|c|c|c|c|c|c|c}\toprule
 $t$ & $\omega_{\uniq}$ & $\omega_{\robu}$  & $\delta$ & {\em ratio\/} & $\varepsilon_{\berr}$ &  $\cond(\cA)$ & $\varepsilon_{\ub}$ & {\em error} \\ \hline\hline
 3 & 5.7e+00 & 7.6e+00 & 6.7e-10 & 5.9e-10 & 1.9e-10 & 1.8e+04 & 5.4e-10 & 1.1e-11 \\ \hline
 4 & 3.5e+00 & 7.1e+00 & 5.7e-10 & 4.1e-09 & 6.2e-10 & 4.2e+03 & 3.7e-09 & 5.2e-11 \\ \hline
 5 & 3.8e+00 & 5.8e+00 & 8.3e-10 & 3.8e-09 & 8.1e-10 & 4.4e+03 & 3.3e-09 & 1.8e-11 \\ \hline
 6 & 4.0e+00 & 6.0e+00 & 8.0e-10 & 3.5e-09 & 6.7e-10 & 2.2e+04 & 3.0e-09 & 1.2e-11 \\ \hline
 7 & 5.8e+00 & 6.5e+00 & 1.9e-09 & 7.1e-09 & 2.7e-09 & 1.2e+04 & 6.1e-09 & 3.7e-11 \\ \hline
 8 & 4.4e+00 & 8.1e+00 & 2.4e-09 & 1.5e-08 & 3.0e-09 & 3.5e+04 & 1.3e-08 & 3.6e-11 \\ \hline
 9 & 3.9e+00 & 8.4e+00 & 1.1e-09 & 9.5e-09 & 8.7e-10 & 1.3e+04 & 8.1e-09 & 1.3e-11
  \\ \bottomrule
\end{tabular}
}
\caption{\small Bound vs. number of diagonal blocks}
\label{tab5}
\end{table}

\smallskip
\noindent{\bf Test 4: noise level.}\quad
In this test, we fix the number of matrices $m=16$.
For different partitions $\tau_n=(3,3,3)$ and $\tau_n=(1,2,3)$,
in Figure~\ref{fig:theta},
we plot $\varepsilon_{\berr}$ ({\em backward error\/}), {\em error} and $\varepsilon_{\ub}$ ({\em bound\/}) versus different noise levels.
We can see from Figure~\ref{fig:theta} that as $\xi$ increases,
$\varepsilon_{\berr}$, {\em error} and $\varepsilon_{\ub}$  all increase almost  linearly.
For all noise levels,  $\varepsilon_{\ub}$  indeed provides a good upper bound on the {\em error}.

\begin{figure}[ht]
\centering
\includegraphics[width=0.49\textwidth]{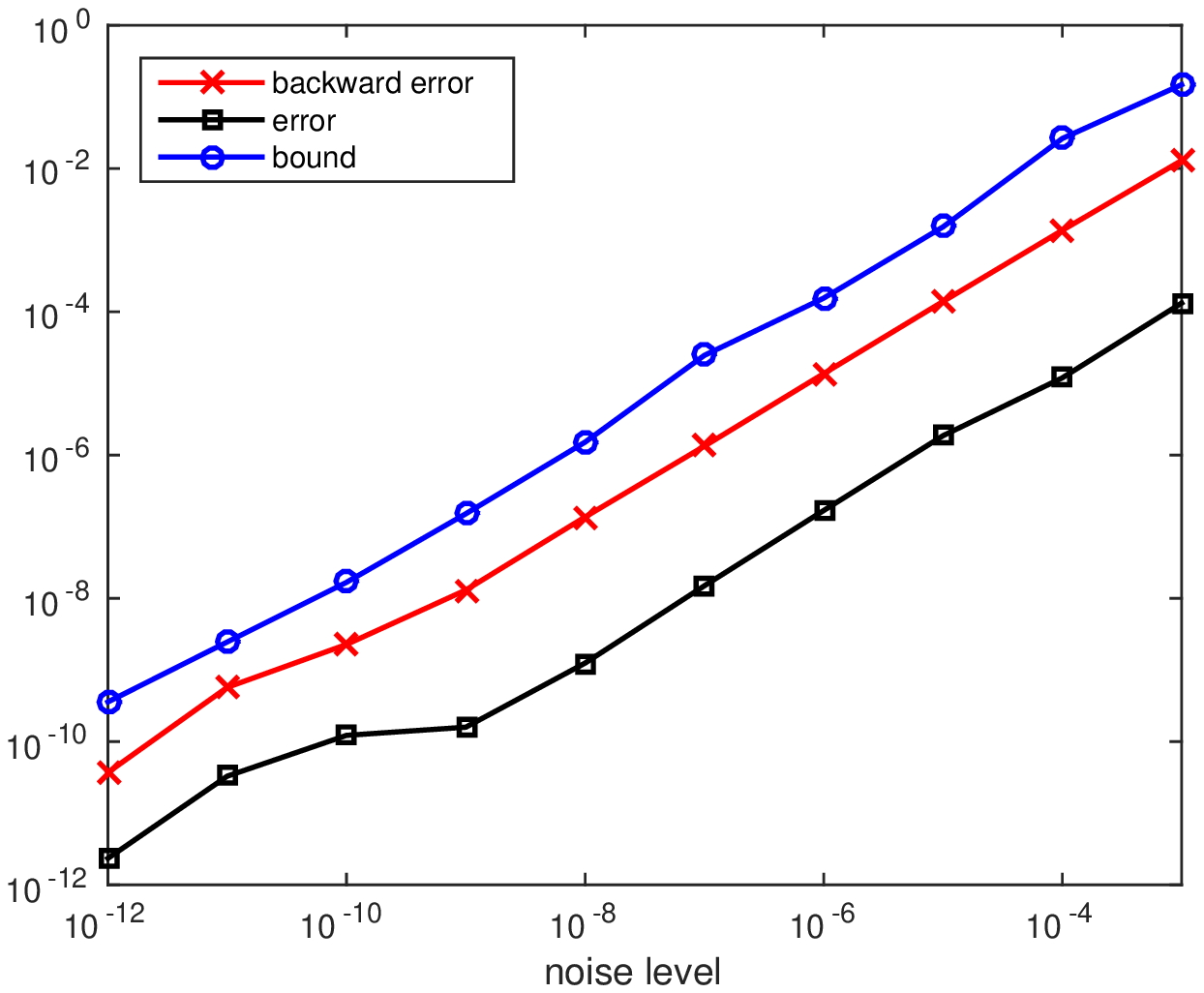}
\includegraphics[width=0.49\textwidth]{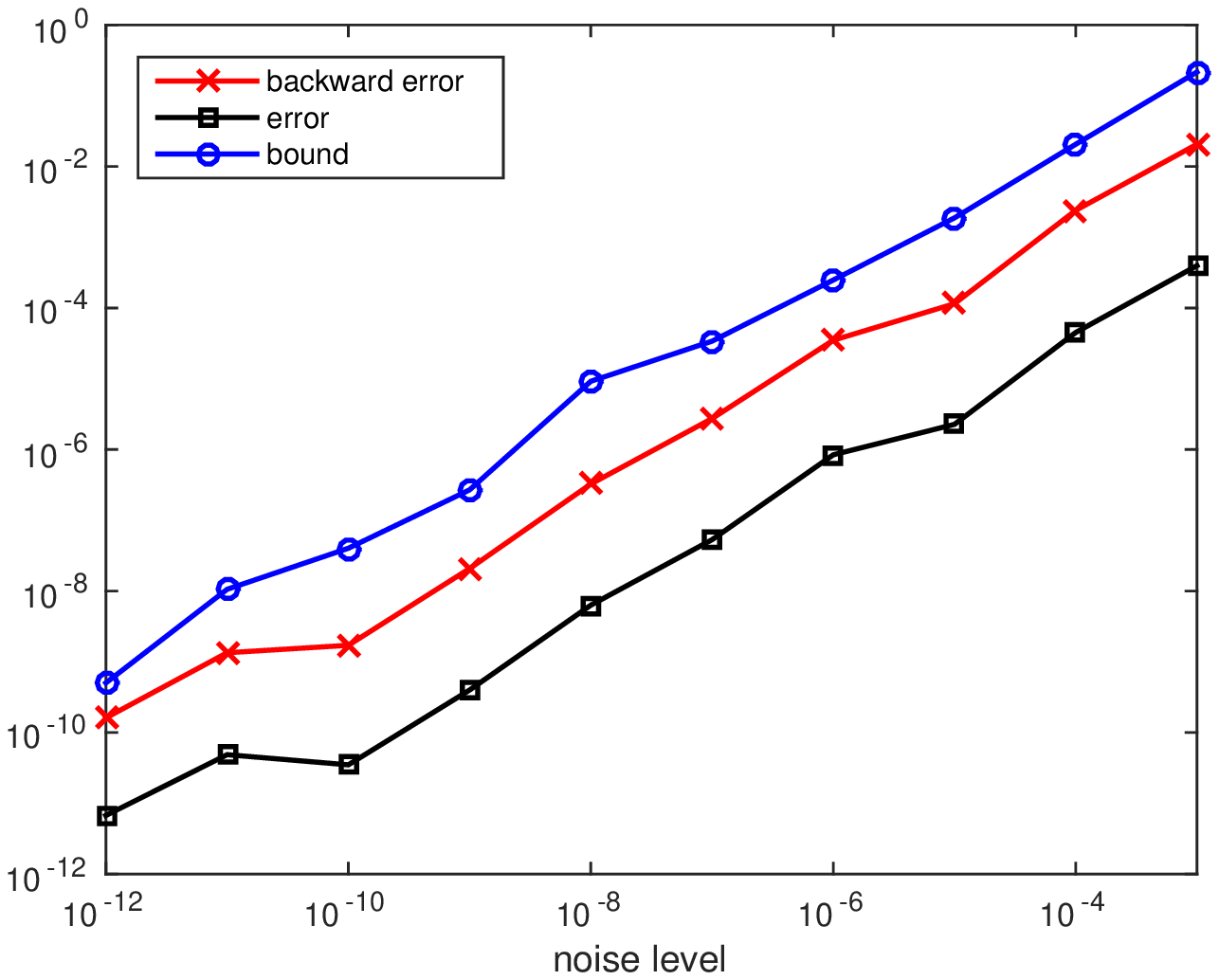}
$\tau_n=(3,3,3)$\hspace{1.5in} $\tau_n=(1,2,3)$
\caption{\small Backward error $\varepsilon_{\berr}$, {\em error}, and  bound $\varepsilon_{\ub}$ vs. noise level}
\label{fig:theta}
\end{figure}

\smallskip
\noindent{\bf Test 5: condition number.}\quad
In this test, we fix $m=16$, $\xi=10^{-12}$.
For two different partitions $\tau_n=(3,3,3)$ and $\tau_n=(1,2,3)$,
we ran the tests 100 times for each partition.
In Figure~\ref{fig:con},
we plot  the quotient $\varepsilon_{\ub}$/{\em error} versus the condition number $\cond(\cA)$.
The smaller the quotient is, the sharper $\varepsilon_{\ub}$ estimates the {\em error}.
We can see from Figure~\ref{fig:con} that $\varepsilon_{\ub}$ provides a good
upper bound on the {\em error}, even as the condition number becomes large.

\begin{figure}[ht]
\centering
\includegraphics[width=0.49\textwidth]{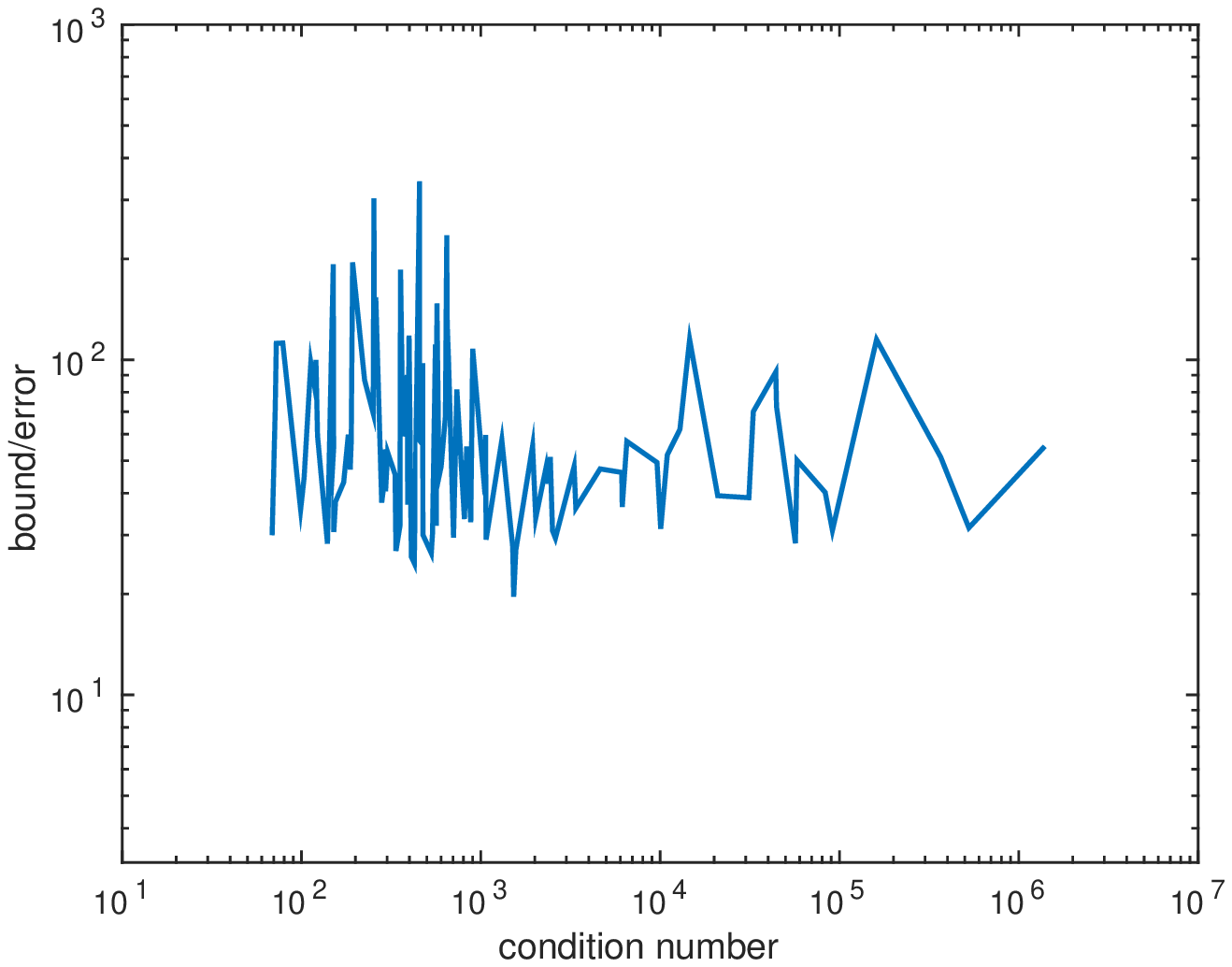}
\includegraphics[width=0.49\textwidth]{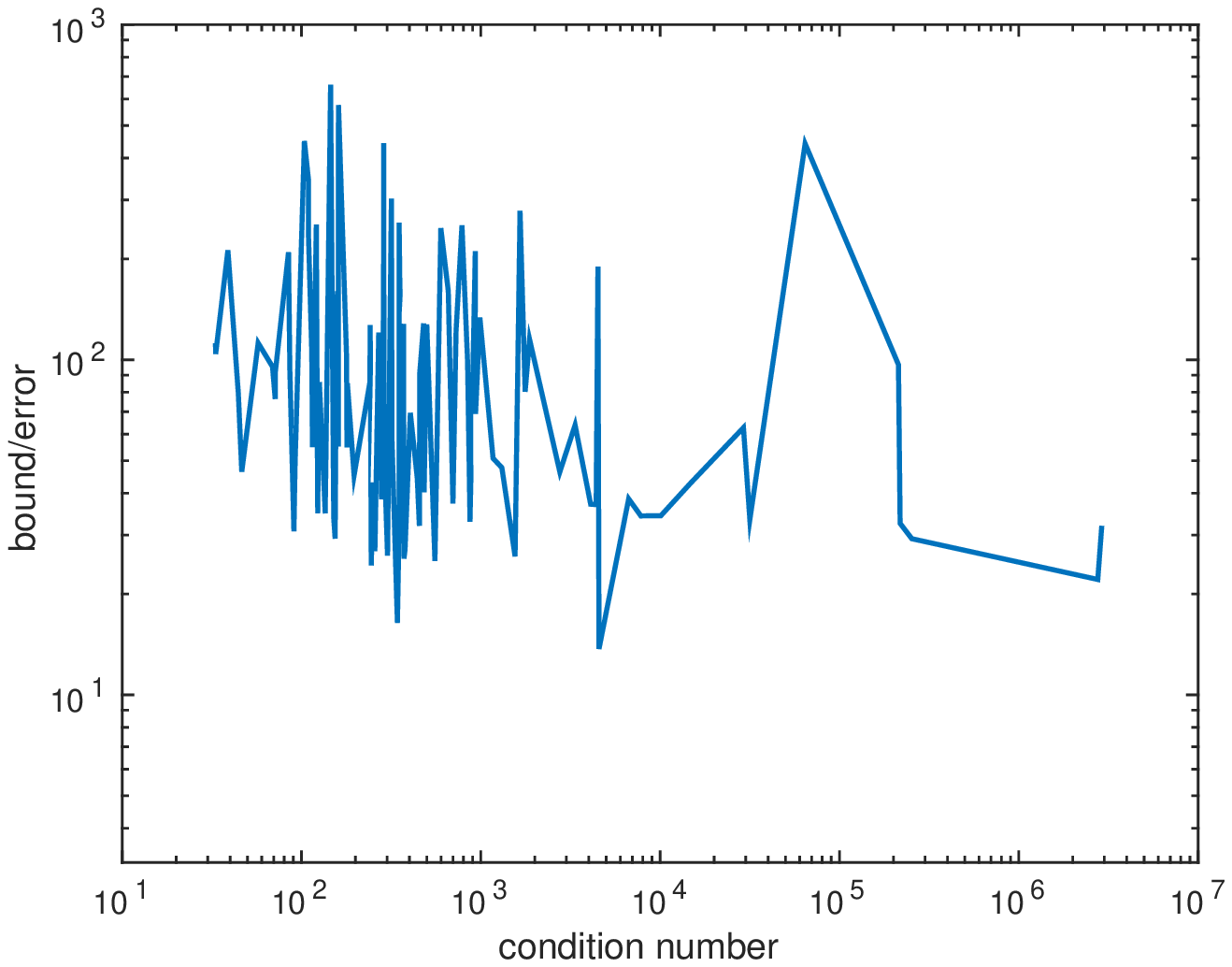}\\
$\tau_n=(3,3,3)$\hspace{1.5in} $\tau_n=(1,2,3)$
\caption{\small Bound $\varepsilon_{\ub}$/{\em error} vs. condition number $\cond(\cA)$}
\label{fig:con}
\end{figure}


\section{Concluding Remarks}\label{sec:conclusion}
In this paper, we developed a perturbation theory for \jbdp.
An upper bound is obtained for the relative distance \eqref{eq:measure}
between a block diagonalizer $W$ for the original \jbdp\ of $\cA$
that is block diagonalizable and an approximate diagonalizer $\wtd W$ for its perturbed \jbdp\ of $\wtd\cA$.
The backward error and condition number are also derived and discussed for \jbdp.
Numerical tests validate the theoretical results.

The \jbdp\ of interest in this paper is for block diagonalization via
congruence transformations which are known to preserve symmetry. Yet our development
so far does not assume that all $A_i$ are symmetric. What will happen to all the results
if they are symmetric? It turns out that not much simplification in results and arguments can be
gained but
all the results remain valid
after minor changes to the definitions of $G_{jk}$ in \eqref{hatmjk}: remove
the second, fourth, $\ldots$, block rows as now all $A_i^{(jj)}$ are symmetric.

We have been limiting all the matrices to real ones, but this is not a limitation.
In fact, if all matrices are complex, the change that needs to be made is simply
to replace all transposes $\scriptstyle\T$ by complex conjugate transposes
$\scriptstyle\HH$, but for simplicity we still would like to keep
all $\gamma_i$, the diagonal entries of $\Gamma$ real, so that we don't have to
change the definition of the gap $g$ in \eqref{eq:res-err}.

Conceivably, we might use similarity transformation for block diagonalization, i.e., instead of
\eqref{eq:nojbd}, we may seek a nonsingular matrix $W\in\mathbb{R}^{n\times n}$ such that all $W^{-1} A_i W$
are $\tau_n$-block diagonal. A similar development that are very much parallel to those in
\cite{cai2017algebraic} and in this paper can be worked out. A major change will be to redefine
the subspace $\na$ in \eqref{na} as
$$
\na:=\big\{Z\in\mathbb{R}^{n\times n}\; :\; A_iZ-ZA_i=0\,\,\mbox{for $1\le i\le m$}\big\}.
$$
We omit the detail.


\end{document}